\newtheorem{theorem}{Theorem}
\newtheorem{lemma}{Lemma}
\newtheorem{proposition}{Proposition}
\newtheorem{assumption}{Assumption}
\theoremstyle{remark}
\newfont{\msbm}{msbm10 at 11pt}
\newcommand{\Cov}[0]{\text{Cov}}
\newcommand{\Var}[0]{\text{Var}}
\newcommand{\E}[0]{\text{E}}
\newcommand\numberthis{\addtocounter{equation}{1}\tag{\theequation}}
\newcommand{\distas}[1]{\mathbin{\overset{#1}{\kern\z@\sim}}}%
\newcommand{\RN}[1]{%
  \textup{\uppercase\expandafter{\romannumeral#1}}%
}
\begin{document}
\onehalfspacing

\title{Batch size selection for variance estimators in MCMC}

\author{Ying Liu \\ Department of Statistics \\ University of California, Riverside \\ {\tt yliu055@email.ucr.edu}\and Dootika Vats\thanks{Research supported by National Science Foundation} \\ Department of Statistics \\ University of Warwick \\ {\tt dootika.vats@gmail.com} \and James M. Flegal \\ Department of Statistics \\ University of California, Riverside \\ {\tt jflegal@ucr.edu}   }  

\date{\today}
\maketitle
\begin{abstract}
We consider batch size selection for a general class of multivariate batch means variance estimators, which are computationally viable for high-dimensional Markov chain Monte Carlo simulations.  We derive the asymptotic mean squared error for this class of estimators.  Further, we propose a parametric technique for estimating optimal batch sizes and discuss practical issues regarding the estimating process.  Vector auto-regressive, Bayesian logistic regression, and Bayesian dynamic space-time examples illustrate the quality of the estimation procedure where the  proposed optimal batch sizes outperform current batch size selection methods.
\end{abstract}

\section{Introduction} \label{sec:intro}
In Markov chain Monte Carlo (MCMC) simulations, estimating the variability of ergodic averages is critical to assessing the quality of estimation \citep[see e.g.][]{fleg:hara:jone:2008, geye:2011, jone:hobe:2001}. Estimation of this variability can be approached through a multivariate Markov chain central limit theorem (CLT).  To this end, let $F$ be a probability distribution with support $\mathsf{X} \subseteq \mathbb{R}^{d}$ and $g: \mathsf{X}\rightarrow \mathbb{R}^{p}$ be an $F$-integrable function.  Suppose we are interested in estimating the $p$-dimensional vector
\[
\theta := \int_{\mathsf{X}} g(x) \,dF,
\]
using draws from a Harris $F$-ergodic Markov chain, say $\{X_t\}$. For $Y_{t}=g(X_{t}),t \ge 1$, $\bar{Y} = n^{-1}\sum_{t=1}^{n} Y_{t}\rightarrow\theta$ with probability 1 as $n\rightarrow\infty$. Let $\Sigma:=\sum_{k=-\infty}^{\infty}\text{Cov}_{F}(Y_{1}, Y_{1+k})$. The sampling distribution for $\bar{Y} - \theta$ is available via a Markov chain CLT
\begin{equation*}
\sqrt{n}(\bar{Y}-\theta)\xrightarrow{d}N_{p}(0, \Sigma) \text{ as } n\rightarrow\infty\,.
\end{equation*}
We assume throughout this CLT holds \citep[see e.g.][]{jone:2004} and consider estimation of $\Sigma$.  Three popular classes of estimators of $\Sigma$ are spectral variance, (non-overlapping) batch means (BM), and overlapping batch means (OBM).  
Part of our contribution is studying multivariate expressions of generalized OBM estimators of $\Sigma$. 

All three classes of estimators account for serial correlation in the Markov chain up to a certain lag.  This lag, denoted as $b$, is called the bandwidth and batch size in spectral variance and (O)BM estimators, respectively.  The choice of $b$ is crucial to finite sample performance, but choosing $b$ has not been carefully addressed in MCMC. A large batch size yields high variability in the estimator and a small batch size can lead to significant underestimation of $\Sigma$. A batch size of $b = \lfloor n^{1/2} \rfloor$, suggested by \cite{fleg:jone:2010}, is often used in practice or as a default in software, like our \texttt{R} package \texttt{mcmcse} \citep{mcmcse:2017}.  Such a batch size is suboptimal since the mean square error (MSE) optimal batch size for estimators we consider here is proportional to $n^{1/3}$ where the proportionality constant requires estimation \citep{song:schm:1995, dame:1995, fleg:jone:2010}.  We carefully consider batch size selection for MCMC simulations and provide computationally viable improvements over current batch size practices. 

First, we present a multivariate version of the generalized OBM estimator of \cite{dame:1991}. This is a substantial generalization of the traditional OBM estimator since it allows the flexibility of using different lag windows. We obtain an MSE optimal batch size expression for this class of estimators. The resulting bias and variance expressions mirror those of spectral variance estimators \citep{andr:1991}.  However, this estimator computes faster and the conditions presented here are standard in MCMC.

The most common estimators for $\Sigma$ in MCMC are BM estimators, where MSE optimal batch sizes are proportional to $n^{1/3}$. For BM and OBM estimators and when MSE optimal batch sizes exist, we provide a stable and fast estimation procedure for the proportionality constant in the optimal batch size. Our parametric approach caters to MCMC applications with long run lengths.  In short, we use a stationary autoregressive process of order $m$ to approximate the marginals of $\{Y_t\}$, which yields a closed form expression for the unknown proportionality constant. We combine these univariate estimators by modifying the weighting system of \cite{andr:1991}.  We compare finite sample performance of our method to nonparametric pilot estimators \citep{poli:roma:1999, politis:dimitris:2011, politis2003adaptive}.  

Integral to our theoretical and practical results is the choice of lag window used in the weighted BM and generalized OBM estimators. Although linear lag windows are non-optimal, they are particularly useful in long MCMC simulations due to superior computational performance. For this reason, we focus on the Bartlett and flat-top lag windows \citep{poli:roma:1995,poli:roma:1996}. The Bartlett lag window corresponds to traditional BM and OBM estimators, while the flat-top lag window yields alternative BM and OBM estimators intended for bias-correction. For flat-top lag windows the MSE optimal criterion results in a batch size of 0, which is clearly inappropriate. We investigate using Bartlett-optimal batch sizes in this case and compare them with an empirical lag-based method.

Batch size selection has been studied in other contexts such as heteroskedasticity and autocorrelation consistent (HAC) covariance matrices, nonparametric density, and spectral density function estimation.  Broadly speaking, these results are not computationally viable for high-dimensional MCMC where long run lengths are standard.  For example, \cite{andr:1991} obtains MSE optimal bandwidths for spectral variance estimators for HAC estimation.  \cite{politis2003adaptive, politis:dimitris:2011} and \cite{poli:roma:1999} discuss bandwidth selection for spectral variance estimators for the flat-top window function. \cite{chan2017automatic} consider recursive estimation of the time-average variance constant where batch sizes are suggested. An interested reader is directed to \cite{Jone:Chri:Jame:Shea:Simon:1996}, \cite{silv:1999},  \cite{Wood:Mich:1970}, 
and \cite{shea:simo:jone:mich:1991} for bandwidth selection in density estimation.  

We illustrate the quality of our estimation procedures via three examples.  First, a vector autoregressive process of order 1 is examined where the optimal batch size is known. Next, we present a Bayesian logistic regression example and compare the performance of the optimal batch size methods with the more commonly used batch sizes of $\lfloor n^{1/3} \rfloor$ and $\lfloor n^{1/2} \rfloor$. A similar analysis is done for a  Bayesian dynamic space-time model.  

Overall, the simulation studies show a significant improvement in accuracy compared to simply choosing a batch sizes equal to $\lfloor n^{1/3} \rfloor$ or $\lfloor n^{1/2} \rfloor$.  Further, our procedures require limited additional computational effort.  For long run lengths, we recommend BM with an MSE optimal batch size estimated via an autoregressive process of order $m$. For shorter run lengths, flat-top estimators are more robust to the choice of  batch size, as long as the batch size is not unreasonably small.  In the near future we will incorporate these recommendations into the \texttt{mcmcse} \texttt{R} package.


The rest of this paper is organized as follows. Section~\ref{sec:batch_means_and_spectral_variance_estimators} presents generalized OBM estimators and MSE results focusing on Barlett and flat-top lag windows.  Section~\ref{sec:constant} discusses practical batch size selection and proposes a parametric estimation technique for the proportionality constant.  Section~\ref{sec:examples} compares performances between suggested and more commonly used batch sizes in three examples.  We conclude with a discussion in Section~\ref{sec:discussion}.  The proofs establishing bias and variance for generalized OBM variance estimators are relegated to the appendices.

\section{Generalized OBM estimator} 
\label{sec:batch_means_and_spectral_variance_estimators}

We consider the generalized OBM estimator of $\Sigma$ constructed using outer products from means inside batches and a lag window function $w_{n}:\mathbb{Z} \to \mathbb{R}$. The lag window $w_n$ is a function that assigns weights to the lags and is integral to spectral variance estimators. 
Define $\Delta_{1}w_{n}(k)=w_{n}(k-1)-w_{n}(k)$ and $\Delta_{2}w_{n}(k)=w_{n}(k-1)-2w_{n}(k)+w_{n}(k+1)$.  For a Monte Carlo sample size $n$, let $\bar{Y}_{l}(k)=k^{-1}\sum_{t=1}^{k}Y_{l+t}$ for $l=0,..., n-k$ and consider
\begin{equation} \label{eq:gobm}
\hat{\Sigma}_w=\dfrac{1}{n}\sum_{k=1}^{b}\sum_{l=0}^{n-k}k^{2}\Delta_{2}w_{n}(k)  \left(\bar{Y}_{l}(k)-\bar{Y} \right) \left(\bar{Y}_{l}(k)-\bar{Y} \right)^{T},
\end{equation}
with components $\hat{\Sigma}_{w,ij}$.  
\cite{dame:1991} proposed the generalized OBM estimator for $p=1$, which was also studied in \cite{atch:2011} and \cite{fleg:jone:2010}.  \cite{vats:fleg:jone:2015spec} generalized the estimator for $p>1$ and used it to establish strong consistency of multivariate spectral variance estimators in MCMC.  \cite{liu:fleg:2018} propose a nonoverlapping version of \eqref{eq:gobm}, referred to as weighted BM estimators.  

We assume throughout that the lag window is an even function defined on $\mathbb{Z}$ such that (i) $|w_{n}(k)| \leq 1$ for all $n$ and $k$, (ii) $w_{n}(0) = 1$ for all $n$, and (iii) $w_{n}(k) = 0$ for all $|k|\geq b$.  Figure~\ref{fig:windows} illustrates the following three lag windows:
\begin{align*}
\text{Bartlett:} \quad w_{n}(k) & = \left( 1-|k|/b \right) I \left( |k| \le b \right) , \\
\text{(Bartlett) Flat-top:} \quad  w_{n}(k) & = I \left( |k| \le b/2 \right) + \left( 2(1-|k|/b) \right)  I \left( b/2 < |k| \le b \right) \text{, and} \\
\text{Tukey-Hanning:} \quad w_{n}(k) & = \left( (1+\text{cos}(\pi |k|/b))/2 \right)  I \left( |k| \le b \right)\,.
\end{align*}
We restrict our attention to the Bartlett and flat-top lag windows since their linearity implies computational efficiency.
\begin{figure}[h]
    \centering
    \includegraphics[width=0.5\textwidth]{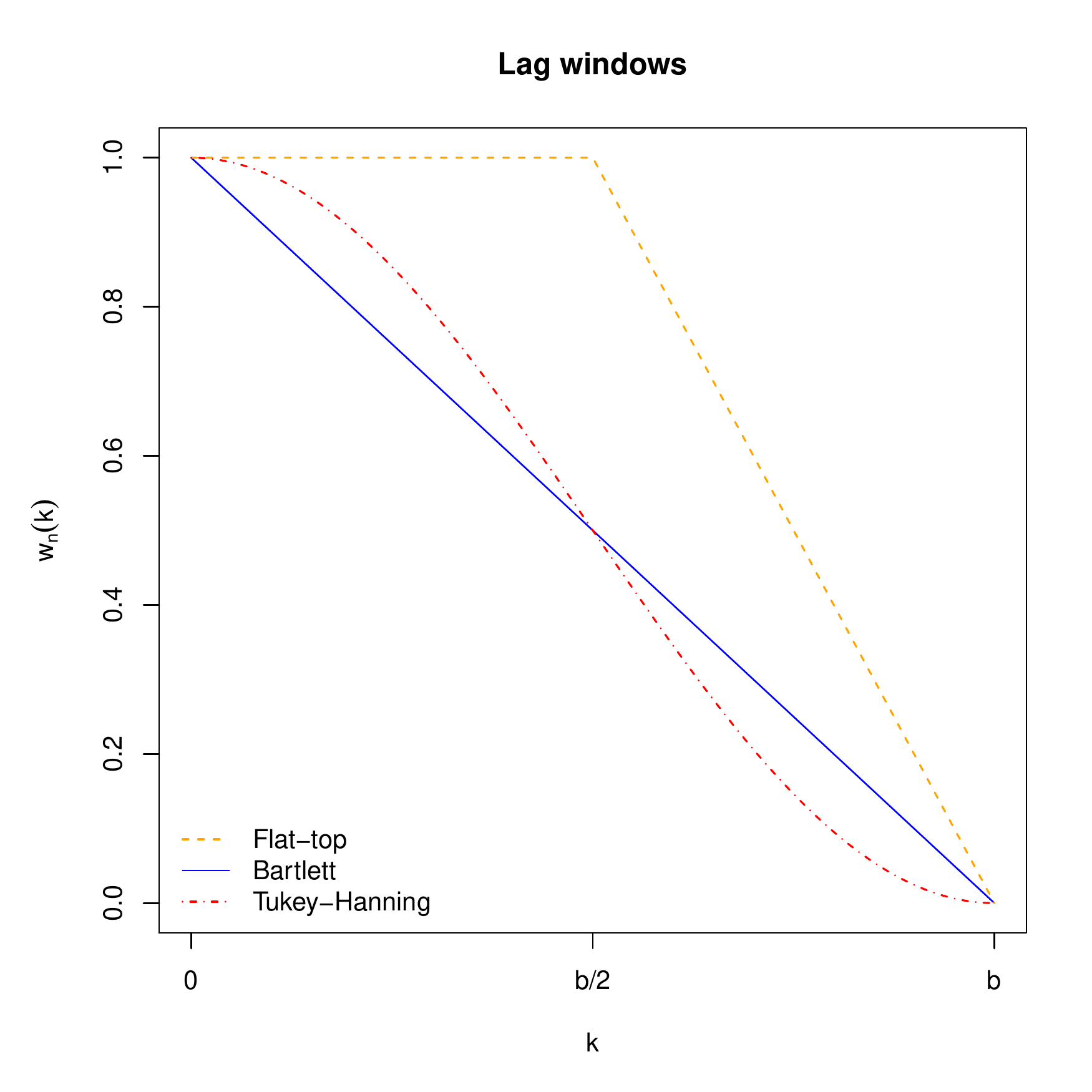}
    \caption{Plot of Bartlett, Tukey-Hanning, and flat-top lag windows.}
    \label{fig:windows}
\end{figure}

Strong and mean square consistency require $b$ and $n/b$ to increase with $n$. We assume the following throughout.
\begin{assumption} \label{ass:batch}
The batch size $b$ is an integer sequence such that $b\rightarrow\infty$ and $n/b\rightarrow\infty$ as $n\rightarrow\infty$, where $b$ and $n/b$ are both monotonically non-decreasing.
\end{assumption}

Lemma~\ref{lem:sip} establishes a strong invariance principle for polynomially ergodic Markov chains. Under the conditions of Lemma~\ref{lem:sip} and conditions on the lag windows, \cite{vats:fleg:jone:2015spec} showed that $\hat{\Sigma}_w$ is strongly consistent for $\Sigma$. Strong consistency is useful for demonstrating asymptotic validity of confidence regions constructed via sequential stopping rules \citep{glyn:whit:1992, vats:fleg:jone:2015output}. Let $\| \cdot\|$ denote the Euclidean norm. 

\begin{lemma} \label{lem:sip}
\citep{vats:fleg:jone:2015spec} Let $f: \mathsf{X} \to \mathbb{R}^p$ be such that $\E_F\|f(X)\|^{2 + \delta} < \infty$ for some $\delta > 0$ and let $\{X_t\}$ be a polynomially ergodic Markov chain of order $\xi \geq (1 + \epsilon)(1 + 2/\delta)$ for some $\epsilon > 0$. Let $B(n)$ be a $p$-dimensional standard Brownian motion and $L$ be a $p \times p$ lower triangular matrix. Then for some $\lambda > 0$ and a finite random variable $D$, with probability 1
	\[
\left\|\sum_{t=1}^{n} f(X) - n \E_F f - L B(n) \right \|  < D n^{1/2 - \lambda}\,.
	\]
\end{lemma}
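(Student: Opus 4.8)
The plan is to prove the strong invariance principle by exploiting the regeneration structure of the Markov chain, thereby reducing the problem to a multivariate strong approximation for \emph{independent} summands. Write $\bar{f} = \E_F f$ and assume without loss of generality that $\bar{f} = 0$. Polynomial ergodicity of order $\xi$ supplies a minorization condition on $\{X_t\}$, so I would first pass to Nummelin's split chain to obtain regeneration times $0 \le \tau_0 < \tau_1 < \tau_2 < \cdots$ at which the chain probabilistically restarts. The excursions between consecutive regenerations are one-dependent, so after conditioning on the first block the block sums
\[
S_j = \sum_{t=\tau_{j-1}}^{\tau_j - 1} f(X_t), \qquad j \ge 1,
\]
form an i.i.d.\ sequence of mean-zero random vectors in $\R^p$, and the block lengths $\ell_j = \tau_j - \tau_{j-1}$ are i.i.d.\ as well.

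The decisive step is a moment calculation. Polynomial ergodicity of order $\xi$ controls the tails of the block lengths $\ell_j$, yielding $\E[\ell_j^{\,r}] < \infty$ for $r$ up to a threshold fixed by $\xi$, while the hypothesis $\E_F\|f\|^{2+\delta} < \infty$ controls the size of the increments within a block. Combining these via Hölder's and Minkowski's inequalities, I would show that the block sums satisfy $\E\|S_j\|^{2+\delta'} < \infty$ for some $\delta' > 0$. The requirement $\xi \ge (1+\epsilon)(1+2/\delta)$ is exactly what forces this moment to exceed $2$: the factor $1 + 2/\delta$ balances the available $f$-moment against the order of block length needed, and the slack $\epsilon$ makes the inequality strict, so that $\delta' > 0$ and a nontrivial rate survives. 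I expect this bookkeeping --- tracking precisely how $\xi$ and $\delta$ trade off --- to be the main obstacle, since it is where the stated moment condition is consumed in full.

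With the block sums having a moment strictly above $2$, I would invoke a multivariate strong approximation for i.i.d.\ mean-zero vectors (a Hungarian-type construction as extended to $\R^p$ by Einmahl, or the almost sure invariance principle of Philipp and Stout / Kuelbs and Philipp). On a suitable enlarged probability space this produces a standard $p$-dimensional Brownian motion and a lower-triangular matrix whose Gram matrix equals the covariance of $S_1$, such that the partial sums $S_1 + \cdots + S_N$ are approximated by a Brownian path up to an almost sure error $o(N^{1/2 - \lambda'})$ for some $\lambda' > 0$.

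Finally I would undo the time change back to the natural index $n$. By the strong law for the renewal sequence $\tau_j$, the number of completed regenerations by time $n$ is $N(n) \sim n/\E[\ell_1]$ almost surely, and $\sum_{t=1}^{n} f(X_t)$ differs from $\sum_{j \le N(n)} S_j$ only by the contribution of the single incomplete block, which the block-length moment bound renders $o(n^{1/2 - \lambda})$ a.s. Substituting $N(n)$ for $N$, rescaling the Brownian motion by $\E[\ell_1]^{1/2}$ (absorbed into $L$, so that $L L^T = \Sigma$), and collecting the several almost sure error terms into a single finite random variable $D$ with a common surviving exponent $\lambda > 0$ yields the claimed bound. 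A secondary technical point is verifying that the fluctuation of $\tau_{N(n)}$ about $n$ is itself $o(n^{1/2 - \lambda})$, which again follows from the block-length moments established in the second step.
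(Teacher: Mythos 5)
You should first note that the paper does not prove Lemma~\ref{lem:sip} at all: it is quoted directly from \citet{vats:fleg:jone:2015spec}, and the proof there takes a different route from yours. In that paper, polynomial ergodicity of order $\xi$ is used to conclude that the chain is strongly ($\alpha$-) mixing with $\alpha(n) = O(n^{-\xi})$, and the conclusion then follows by invoking the almost sure invariance principle of \citet{kuelbs:phil:1980} for partial sums of mixing Banach-space-valued random variables; the threshold $\xi \geq (1+\epsilon)(1+2/\delta)$ is lifted essentially verbatim from the hypothesis of that theorem. Your regeneration route is a recognized classical alternative (it is how the univariate antecedents of this lemma were handled in the MCMC literature), but as written it has two genuine gaps, both located exactly at the steps you flag as delicate.

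First, polynomial ergodicity does not ``supply'' a one-step minorization. Harris ergodicity guarantees small sets only for some $m$-step kernel $P^m$; when $m > 1$, the Nummelin split chain regenerates only along the subsampled chain and the excursion sums are 1-dependent, not independent. Your proposal acknowledges the one-dependence and then asserts that the block sums are i.i.d.\ ``after conditioning on the first block'' --- conditioning does not remove one-dependence, and the strong approximation you want to invoke (Einmahl's multivariate extension of Koml\'os--Major--Tusn\'ady) is stated for i.i.d.\ vectors. You would need either to add a one-step minorization hypothesis (not present in the lemma) or to run a block-of-blocks argument for 1-dependent summands, neither of which is sketched. Second, the moment bookkeeping that you yourself call ``the main obstacle'' is asserted rather than carried out, and the claim that $\xi \geq (1+\epsilon)(1+2/\delta)$ is \emph{exactly} what forces $\E\|S_j\|^{2+\delta'} < \infty$ with $\delta' > 0$ is unsupported. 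Converting the total-variation rate $n^{-\xi}$ into moments of regeneration times goes through drift/Lyapunov equivalences (Tuominen--Tweedie, Jarner--Roberts), and the exponent trade-off that calculation produces is not obviously the one in the statement; that particular constant appears in the lemma because it is the hypothesis of the Kuelbs--Philipp mixing theorem, which your argument never touches. The remaining architecture --- the random time change, the identity $\Cov(S_1) = \E[\ell_1]\Sigma$ absorbed into $L$, and the control of the incomplete block --- is standard and fine, but the two steps where the stated hypotheses must actually be consumed are precisely the ones left as claims.
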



Using $\hat{\Sigma}_w$ requires selecting a batch size, $b$. Large batch sizes capture more lag correlations yielding larger variance, while small batch sizes yield higher bias.  Theorems~\ref{thm:bias_sv} and~\ref{thm:var} in Appendix~\ref{sec:optimal} derive the element-wise asymptotic bias and variance for $\hat{\Sigma}_w$, respectively, which we summarize below as the element-wise MSE of $\hat{\Sigma}_w$. Denote the components of $\Sigma$ as $\Sigma_{ij}$ and the lag $k$ autocovariance by $R(k) = \E_{F} \left( Y_{t} - \theta \right) \left( Y_{t+k} - \theta \right)^{T}$.  Further, define
\begin{equation*}
\Gamma=-\sum_{k=1}^{\infty}k \left[ R(k)+R(k)^T \right]
\end{equation*}
with components $\Gamma_{ij}$.
\begin{theorem}
	\label{thm:mse_obm}
Let the conditions of Lemma~\ref{lem:sip} hold for $f = g$ and $f = g^2$ (where the square is element-wise) such that $\E_FD^4 < \infty$ and $\E_F \|g \|^{4 + \delta} < \infty$ for some $\delta > 0$. Further suppose
\begin{enumerate}
\item $\sum_{k=1}^{b} k \Delta_2 w_n(k) = 1$,
\item $
\sum_{k=1}^{b}(\Delta_{2}w_{k})^2 =  O\left(1/b^2\right),
$
\item $b \,n^{1 - 2\lambda} \left(\sum_{k=1}^{b}|\Delta_{2}w_{n}(k)| \right)^{2} \log n\rightarrow 0$, \text{ and}
\item $n^{1 - 2\lambda} \sum_{k=1}^{b}|\Delta_{2}w_{n}(k)|\rightarrow 0$\,.
\end{enumerate}
Then, for $C = b \sum_{k=1}^{b}\Delta_{2}w_{n}(k)$ and $S \ne 0$ that depends on the lag window,
\begin{align*}
\text{MSE}\left(\hat{\Sigma}_{w,ij}\right)
&=\dfrac{C^2\Gamma^{2}_{ij}}{b^{2}} + [\Sigma_{ii}\Sigma_{jj}+\Sigma_{ij}^{2}] S\dfrac{b}{n}+o\left(\dfrac{b}{n}\right)+o\left(\dfrac{1}{b}\right)\,.  \numberthis \label{eq:obm_mse}
\end{align*}
\end{theorem}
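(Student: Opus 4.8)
The plan is to use the exact decomposition $\MSE(\hat\Sigma_{w,ij}) = \Bias(\hat\Sigma_{w,ij})^2 + \Var(\hat\Sigma_{w,ij})$ and to establish the leading bias and leading variance separately, as recorded in Theorems~\ref{thm:bias_sv} and~\ref{thm:var}. Granting those two expansions, \eqref{eq:obm_mse} follows by squaring the leading bias, adding the leading variance, and checking that every remaining contribution is $o(b/n) + o(1/b)$; so the real content is in the two component calculations, with the variance being the harder one.

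For the bias I would compute $\E[\hat\Sigma_{w,ij}]$ directly. Writing $\bar Y_l(k) - \bar Y = (\bar Y_l(k) - \theta) - (\bar Y - \theta)$ and expanding the outer product, the $(\bar Y - \theta)$ cross terms and the $(\bar Y - \theta)(\bar Y - \theta)^T$ term are $O(1/n)$ and hence negligible against $1/b$; the leading piece uses the stationarity identity $k^{2}\,\E[(\bar Y_l(k) - \theta)(\bar Y_l(k) - \theta)^T] = \sum_{|r| < k}(k - |r|)\,R(r)$. Substituting this, noting the inner sum over batch starts $l$ contributes a factor $n - k + 1 \approx n$, and interchanging the order of the sums over $k$ and the lag $r$, gives $\E[\hat\Sigma_w] \approx \sum_{|r| < b} a_r R(r)$ with $a_r = \sum_{k = |r|+1}^{b}(k - |r|)\,\Delta_2 w_n(k)$. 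Summation by parts, exploiting the second-difference structure of $\Delta_2 w_n$, together with condition~1, $\sum_{k=1}^{b} k\,\Delta_2 w_n(k) = 1$, yields $a_r = 1 - (C/b)|r| + o(\cdot)$. Since $\sum_{r=1}^{\infty} r\,[R(r) + R(r)^T] = -\Gamma$ by stationarity, this produces $\E[\hat\Sigma_{w,ij}] - \Sigma_{ij} = (C/b)\,\Gamma_{ij} + o(1/b)$, the truncation remainder $\sum_{|r| \ge b} R_{ij}(r)$ being lower order under the moment and polynomial-ergodicity hypotheses; squaring gives the term $C^2\Gamma_{ij}^2/b^2$.

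For the variance the central tool is the strong invariance principle of Lemma~\ref{lem:sip}: I would replace each centered partial sum entering $\bar Y_l(k)$ by the corresponding increment of the Brownian motion $L B(n)$, with the replacement error controlled by conditions~3 and~4, the bound $D n^{1/2-\lambda}$, and $\E_F D^4 < \infty$. The problem then reduces to the variance of a quadratic functional of Brownian increments. Applying the Gaussian product-moment (Isserlis) identity $\Cov(Z_a Z_b, Z_c Z_d) = \Sigma_{ac}\Sigma_{bd} + \Sigma_{ad}\Sigma_{bc}$ to the covariances of the batch-mean outer products yields the factor $\Sigma_{ii}\Sigma_{jj} + \Sigma_{ij}^2$, and assembling the fourfold sum over batch starts $l, l'$ and lags $k, k'$ — with condition~2, $\sum_{k}(\Delta_2 w_n(k))^2 = O(1/b^2)$, fixing the scaling — extracts the leading term $[\Sigma_{ii}\Sigma_{jj} + \Sigma_{ij}^2]\,S\,b/n$ with the window-dependent constant $S \ne 0$, plus $o(b/n)$.

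I expect the variance step to be the main obstacle. Translating the estimator into the Brownian setting and then organizing the quadruple sum so that the diagonal and near-diagonal batch overlaps combine into a single constant $S$, while rigorously bounding the off-diagonal contributions and the invariance-principle remainders as $o(b/n)$, is the delicate bookkeeping; the hypothesis $\E_F\|g\|^{4+\delta} < \infty$ is precisely what legitimizes the Gaussian fourth-moment reduction and the interchange of expectation and summation. Once both pieces are established, combining them and absorbing the bias cross term $2\,\Bias \cdot o(1/b)$ and all remainders into $o(b/n) + o(1/b)$ delivers \eqref{eq:obm_mse}.
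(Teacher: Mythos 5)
Your proposal matches the paper's proof in both structure and substance: the same decomposition $\MSE = \Bias^2 + \Var$; the bias obtained by directly computing $\E[\hat\Sigma_{w,ij}]$ and invoking condition~1 to extract the $(C/b)\Gamma_{ij}$ term (the paper's Theorem~\ref{thm:bias_sv}); and the variance obtained by passing to the Brownian functional via the strong invariance principle, applying Gaussian fourth-moment (Isserlis) identities, and using conditions~2--4 with $\E_F D^4 < \infty$ to control the replacement error (the paper's Theorem~\ref{thm:var} and its supporting lemmas). The only cosmetic difference is that you organize the bias computation by lags with summation by parts, while the paper sums a batch-mean covariance expansion over $k$; these amount to the same calculation.
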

Assumption~\ref{ass:batch} holds for batch sizes proportional to $\lfloor n^{\nu} \rfloor$ where $0 < \nu < 1$.  Then $\text{MSE}\left(\hat{\Sigma}_{w,ij}\right) \to 0$, as $n \to \infty$, if $1 - 2 \lambda - \nu < 0$.  The constant $\lambda$ is related to the mixing rate of the Markov chain \cite[see][]{kuelbs:phil:1980,dame:1991}; a value closer to 1/2 indicates fast mixing of the process.  

To obtain the MSE optimal batch size for a particular lag window, we minimize the MSE expression in \eqref{eq:obm_mse}. Both $C$ and $S$ depend on the choice of lag window, where $S$ may depend on $b$ for non-linear lag windows. We focus on linear lag windows due to their computational feasibility.  That is, MSE optimal batch sizes for the Bartlett and flat-top windows are
\begin{equation}
\label{eq:optimal batch}
b_{opt,ij}=\left(\dfrac{2C^2}{S}\dfrac{\Gamma_{ij}^2n}{\Sigma_{ii}\Sigma_{jj}+\Sigma_{ij}^{2}}\right)^{1/3}\,.
\end{equation}
For these lag windows, \cite{andr:1991} obtains the same MSE for spectral variance estimators in linear regression settings with heteroscedastic, temporally dependent errors of unknown form.  However, the formulation at \eqref{eq:gobm} is more computationally efficient.

\subsection{Bartlett}

The Bartlett lag window is by far the most common, see e.g.\ \cite{newe:west:1987}.  For this lag window, $\Delta_{2}w_{n}(b)=1/b$ and $\Delta_{2}w_{n}(k)=0$ for all other $k$ values.  Then, $C = 1$ and the double summation in \eqref{eq:gobm} reduces to
\[
\hat{\Sigma}_{B}(b) =\dfrac{b}{n}\sum_{l=0}^{n-b}  \left(\bar{Y}_{l}(b)-\bar{Y}  \right)  \left(\bar{Y}_{l}(b)-\bar{Y}   \right)^{T},
\]
which is asymptotically equivalent to the OBM estimator. The conditions of Theorem~\ref{thm:mse_obm} are satisfied since $\sum_{k=1}^{b} k \Delta_2 w_n(k) = 1$, $\sum_{k=1}^{b} (\Delta_2 w_b(k))^2 = 1/b^2$, and $\sum_{k=1}^{b} |\Delta_2 w_b(k))| = 1/b$. Finally, $S = 2/3$ (see Appendix~\ref{sec:optimal}) so the MSE optimal batch size at \eqref{eq:optimal batch} is
\begin{equation*}
\label{eq:obm_optimal}
b_{opt,ij}=\left(\dfrac{3\Gamma_{ij}^2n}{\Sigma_{ii}\Sigma_{jj}+\Sigma_{ij}^{2}}\right)^{1/3}\,.
\end{equation*}

\cite{liu:fleg:2018} propose a nonoverlapping version of \eqref{eq:gobm}, referred to as weighted BM estimators.  When the Bartlett lag window is used within weighted BM, the result is the commonly used BM estimator of $\Sigma$, which we now describe. Let $n = ab$, where $a$ is the number of batches and $b$ is the batch size. For $l = 0, \dots a-1$, let $\bar{Y}_l = b^{-1} \sum_{t=1}^{b} Y_{lb+t}$ denote the mean vector of the batch. Then the BM estimator is
\begin{equation*}
\dot{\Sigma}_B(b) = \dfrac{b}{a-1} \sum_{l=0}^{a-1} (\bar{Y}_l - \bar{Y})(\bar{Y}_l - \bar{Y})^T\,.
\end{equation*}
The MSE for the BM estimator can be obtained by setting $r = 1$ in Theorems 5 and 6 of \cite{vats:fleg:2018}.  Specifically, the form is identical to \eqref{eq:optimal batch} with $C = 1$ and $S = 1$.  

\subsection{Flat-top}
Flat-top lag windows do not downweight small lag terms by setting $w_{n}(k)=1$ for $k$ near $0$ \citep[see e.g.][]{poli:roma:1995,poli:roma:1996}.  It is easy to show $\Delta_{2}w_{n}(b/2)=-2/b,\ \Delta_{2}w_{n}(b)=2/b$, and $\Delta_{2}w_{n}(k)=0$ for all other $k$ values.  Hence, $C = 0$ and for even $b$ in \eqref{eq:gobm} we have
\[
\hat{\Sigma}_{F}(b) = 2\,\hat{\Sigma}_{B}(b) - \hat{\Sigma}_{B}(b/2).
\]
That is, using the flat-top lag window in \eqref{eq:gobm} gives a linear combination of OBM estimators.  In addition, $\sum_{k=1}^{b} k \Delta_2 w_n(k) = 1$, $\sum_{k=1}^{b} (\Delta_2 w_b(k))^2 = 8/b^2$, and $\sum_{k=1}^{b} |\Delta_2 w_b(k))| = 4/b$, so the conditions of Theorem~\ref{thm:mse_obm} are satisfied. Moreover, $S=4/3$ (see Appendix~\ref{sec:optimal}) so the MSE in \eqref{eq:optimal batch} is a strictly decreasing function of $b$ yielding an MSE optimal batch size of 0.


The flat-top lag window within weighted BM yields a linear combination of BM estimators, i.e., for even $b$
\[
\dot{\Sigma}_F(b) = 2\,\dot{\Sigma}_B(b) - \dot{\Sigma}_B(b/2)\,.
\]
Setting $r = 2$ in Theorems 5 and 6 of \cite{vats:fleg:2018}, the MSE is identical to \eqref{eq:obm_mse} with $C = 0$ and $S=5/2$ in \eqref{eq:optimal batch}.  Again the MSE is a decreasing function of $b$ implying an unrealistic optimal batch size of 0. 

Flat-top lag windows provide bias-corrected variance estimators at the cost of slightly higher variance, thus a MSE optimal criterion for batch size selection is undesirable for these lag-windows.  Instead we consider a lag-based method of choosing $b$ from \cite{poli:roma:1995}, which is presented in the next section.

\section{Batch sizes in practice} \label{sec:constant}

Informed batch size selection requires knowledge of the underlying process.  For optimal batch sizes at \eqref{eq:optimal batch}, this knowledge is contained in $\Gamma$ and $\Sigma$.  A common solution is to estimate these via a pilot run \citep[see e.g.][]{Wood:Mich:1970, Jone:Chri:Jame:Shea:Simon:1996, load:cliv:1999}. Two such procedures are the nonparametric empirical rule \citep{politis2003adaptive} and the iterative plug-in estimator \citep{broc:mich:gass:theo:herr:1993,buhlmann1996locally}. In both, a spectral variance estimator is constructed where the bandwidth is chosen by an empirical or iterative rule that monitors lag autocorrelations.  We do not require consistency for estimators of $b_{opt,ij}$, thus the pilot step need not be based on BM or spectral variance estimators. 

Estimators of $b_{opt,ij}$ should be computationally inexpensive and have low variability.  The empirical rule and iterative plug-in estimators can be computationally involved (especially for slow mixing chains) and hence they fail the first criteria.  These estimators also exhibit high variability, which we illustrate in our examples.  Low variability is particularly important since the user cannot be expected to run multiple pilot runs.

\subsection{MSE optimal batch sizes}

We provide a parametric estimation technique for estimating $\Gamma$ and $\Sigma$ specifically tailored for MCMC simulations.  Choosing a different $b$ for each element of $\Sigma$ requires substantial computational effort and it is unclear if it makes intuitive sense. Since $b$ can be calculated for each univariate component, we define the overall optimal $b$ by a harmonic-like average of the diagonals $b_{opt, ii}$. That is, we define
\begin{equation*}
b_{opt} \propto \left( \dfrac{\sum_{i=1}^{p}\Gamma_{ii}^2}{ \sum_{i=1}^{p}\Sigma_{ii}^2}\right)^{1/3}n^{1/3} \,,
\end{equation*}
where the proportionality constant is known and depends on the choice of variance estimator and the resulting $C$ and $S$.  This approach is similar to \cite{andr:1991} with a unit weight matrix appropriate for MCMC simulations.  

We now present pilot estimators that are computationally inexpensive and demonstrate low variability. We use a stationary autoregressive process of order $m$ (AR$(m)$) approximation to the marginals of $\{Y_t\}$. For $t = 1, 2, \dots,$ let $W_t \in \mathbb{R}$ be such that
\[
W_t = \sum_{i=1}^{m} \phi_i W_{t-i} + \epsilon_t\,,
\]
where $\epsilon_t$ has mean 0 and variance $\sigma^2_e$, and $\phi_1, \dots, \phi_m$ are the autoregressive coefficients. Let $\gamma(k)$ be the lag $k$ autocovariance function for the process. By the Yule-Walker equations, it is known that for $k > 0$, $ \gamma(k) = \sum_{i=1}^{m} \phi_i \gamma(k-i)$, and $\gamma(0) = \sum_{i=1}^{m} \phi_i \gamma(-i) + \sigma^2_e$. We obtain expressions for $\Sigma_{ii} = \sum_{k=-\infty}^{\infty} \gamma(k)$ and $\Gamma_{ii} = - 2 \sum_{k=1}^{\infty}k \gamma(k)$ and use these to obtain pilot estimates denoted $\Sigma_{p,i}$ and $\Gamma_{p,i}$, respectively. First, it is known that
$$\sum_{k=-\infty}^{\infty} \gamma(k) = \dfrac{\sigma^2_e}{(1 - \sum_{i=1}^{m} \phi_i)^2} \,.$$
%
Following \cite{taylor:2018},
\begin{align*}
\sum_{k=1}^{\infty} k \gamma(k) &= \sum_{k=1}^{\infty} k  \sum_{i=1}^m \phi_i \gamma(k-i) \\
&= \sum_{i=1}^m \phi_i \left( \sum_{k=1}^{\infty} k   \gamma(k-i) \right) \\
&= \left(\sum_{i=1}^m \phi_i  \sum_{k=1}^{i} k   \gamma(k-i) \right) + \left( \sum_{i=1}^m \phi_i \sum_{s=1}^{\infty} (s+i)   \gamma(s) \right) \\
&= \left(\sum_{i=1}^m \phi_i  \sum_{k=1}^{i} k   \gamma(k-i) \right) + \left( \sum_{i=1}^m \phi_i  \sum_{s=1}^{\infty} s   \gamma(s) \right)+ \left( \sum_{i=1}^m \phi_i i \sum_{s=1}^{\infty}    \gamma(s)  \right) \\
\Rightarrow \sum_{k=1}^{\infty} k \gamma(k) & = \left[\left(\sum_{i=1}^m \phi_i  \sum_{k=1}^{i} k   \gamma(k-i) \right) +  \dfrac{(\sigma^2_e - \gamma(0))}{2}\left( \sum_{i=1}^m i \phi_i  \right)  \right] \left( \dfrac{1}{1 - \sum_{i=1}^{m} \phi_i} \right)\,.
\end{align*}

We fit an AR$(m)$ model for each marginal of the Markov chain, where $m$ is determined by Akaike information criterion. The autocovariances $\gamma(k)$ are estimated by the sample lag autocovariances, $\hat{\gamma}(k)$.  Then $\sigma^2_e$ and  $\phi_i$ are estimated by $\hat{\sigma}^2_{e}$ and $\hat{\phi}$, respectively, by solving the Yule-Walker equations. For the $i$th component of the Markov chain, the resulting AR$(m)$-fit estimators are 
\begin{equation*}
	\label{eq:ar_sigma}
\Sigma_{p,i} = \dfrac{\hat{\sigma}^2_e}{(1 - \sum_{i=1}^{m} \hat{\phi}_i)^2}, \text{ and }
\end{equation*}
\begin{equation*}
\Gamma_{p,i} = -2 \left[\left(\sum_{i=1}^m \hat{\phi}_i  \sum_{k=1}^{i} k   \hat{\gamma}(k-i) \right) +  \dfrac{(\hat{\sigma}^2_e - \hat{\gamma}(0))}{2}\left( \sum_{i=1}^m i \hat{\phi}_i  \right) \left( \dfrac{1}{1 - \sum_{i=1}^{m} \hat{\phi}_i} \right) \right]\,.
\end{equation*}

An AR$(m)$-fit is a natural choice over the more common AR$(1)$-fit since the components of $Y_t$ are usually not Markov chains. An AR$(m)$-fit for MCMC has also been studied by \cite{thom:2010} who considers estimating the integrated autocorrelation time of a process. Further, the \texttt{R} package \texttt{coda} \citep{plum:best:cow:2006} uses $\Sigma_{p,i}$ to estimate $\Sigma_{ii}$ when calculating effective sample sizes. 

\subsection{Lag-based methods}

\cite{poli:roma:1995} suggest using a bandwidth equal to $2r$ with $r$ chosen such that the estimated lag correlation at $r$ is less than an upper bound.  We consider the following upper bound of \cite{politis2003adaptive}.  Let $\hat{\rho}_{i}(k)$ be the sample lag $k$ correlation for the $i$th component, and let $\rho(k) = \max_{i}|\hat{\rho}_i|$ be the maximum $k$-lag correlation.  
Then $r$ is the smallest integer for which $|\hat{\rho}(r + s)| < 2\sqrt{\log(n)/n}$, for all $s = 1, 2, \dots, 5$. The resulting bandwidth is essentially the lag beyond which this is no significant correlation.  This cutoff works well in simulations, where we also compare it with the AR$(m)$-fit.
\section{Examples} \label{sec:examples}

\subsection{Vector auto-regressive example}
Consider the $p$-dimensional vector auto-regressive process of order 1 (VAR(1)) 
\[
X_t=\Phi X_{t-1}+\epsilon_t,
\]
for $t=1,2,\dots$ where $X_t\in \mathbb{R}^p$, $\epsilon_t$ are i.i.d.\ $N_p(0,I_p)$ and $\Phi$ is a $p\times p$ matrix.  The Markov chain is geometrically ergodic when the largest eigenvalue of $\Phi$ in absolute value is less than 1 \citep{tjostheim1990non}.  In addition, if $\otimes$ denotes the Kronecker product, the invariant distribution is $N_{p}(0, V)$, where $vec(V)=(I_{p^2}-\Phi\otimes\Phi)^{-1}vec(I_p)$.  Consider estimating $\theta=E{X_1} =0$ with $\bar{Y}=\bar{X}_n$. It is known that $\Sigma = (I_p-\Phi)^{-1}V+V(I_p-\Phi)^{-1}-V$.  It can be shown that $\Gamma = - \left[(I_p -\Phi)^{-2}\Phi V  + V \Phi^T(I_p -\Phi^T)^{-2}\right]$.
Thus, the true optimal batch size coefficient can be obtained using the diagonals of $\Sigma$ and $\Gamma$. 

To ensure geometric ergodicity, we generate the process as follows. Consider a $p\times p$ matrix $A$ with each entry generated from a standard normal distribution, let $B=AA^{T}$ be a symmetric matrix with the largest eigenvalue $\gamma$, then set $\Phi_0=B/(\gamma+0.001)$. We evaluate a series of $\Phi=\rho \Phi_0$, with $\rho=\{.80, .82, .84, \dots,.90\}$, where larger $\rho$ values imply stronger auto-covariance and cross-covariance in the process. We set $p = 3$ and over 1000 replications for each $\rho$, a pilot run of length 1e4 estimates the batch size using various methods, and the final estimation of $\Sigma$ is done using a run length of 1e5.

First, we compare the quality of estimation of $(\sum \Gamma_{ii}^2/ \sum \Sigma_{ii}^2 )$ using the AR$(m)$-fit and the nonparametric pilot estimator for each $\rho$.  Optimal coefficients are computed and MSEs over 1000 replications are plotted in Figure~\ref{fig:var1_batch} with $95\%$ confidence intervals. Estimation quality using the AR$(m)$-fit remains fairly constant as a function of $\rho$, while the nonparametric method yields higher MSE as $\rho$ increases.  
\begin{figure}[]
        \centering
        \includegraphics[width=.45\textwidth]{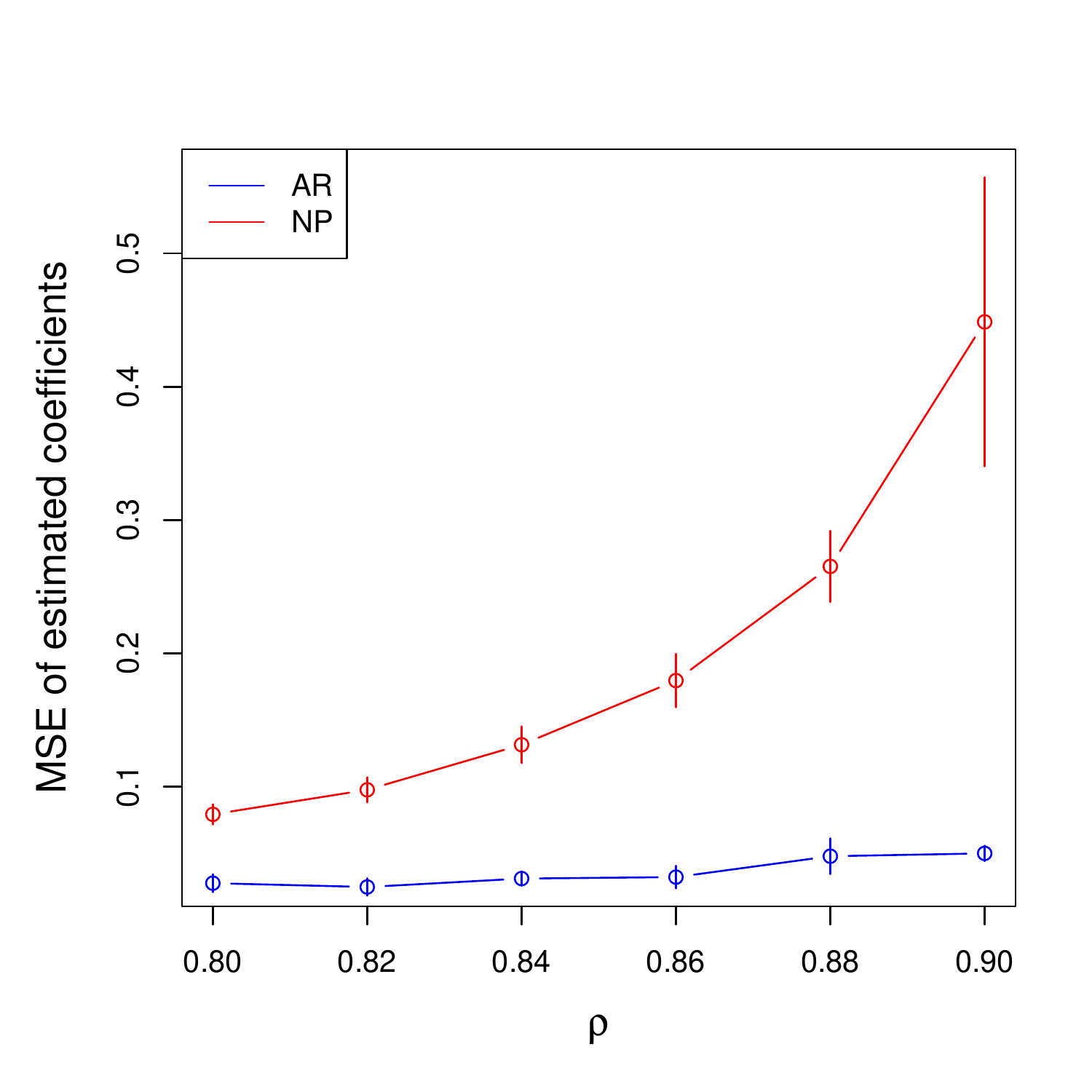}
        \includegraphics[width=.45\textwidth]{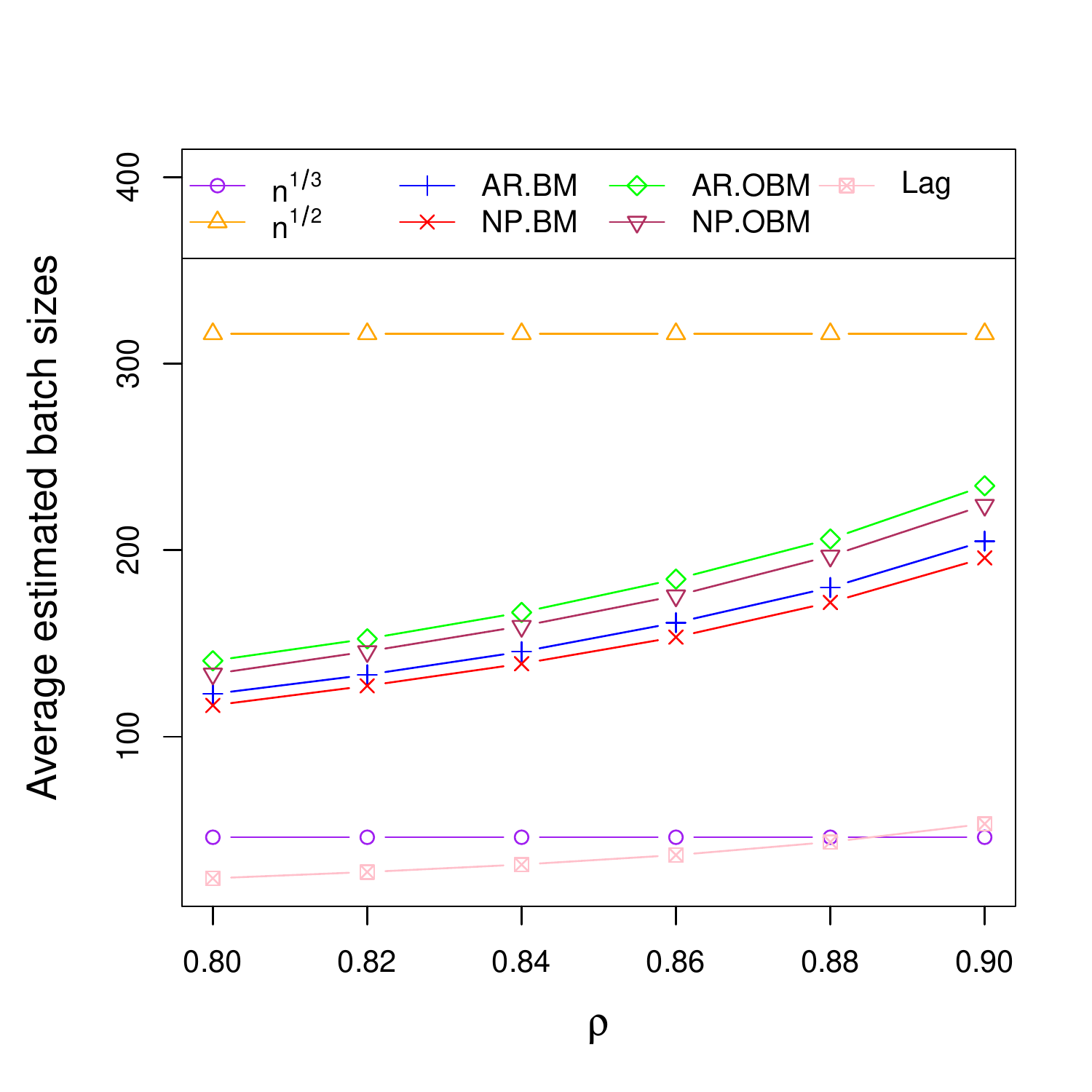}
\caption{Plots of average MSE of the estimated coefficients using the AR$(m)$-fit versus the nonparametric method with $95\%$ CI (left); and estimated batch sizes over $\rho$ using the seven methods (right). }
\label{fig:var1_batch}        
\end{figure}
Also in Figure~\ref{fig:var1_batch} are estimated batch sizes (averaged over 1000 replications) for BM using AR$(m)$-fit (AR.BM), BM using nonparametric method (NP.BM), OBM using AR$(m)$-fit (AR.OBM), OBM using nonparametric method (NP.OBM), and lag-based method. We also plot batch sizes used in practice, $\lfloor n^{1/3} \rfloor$ and $\lfloor n^{1/2} \rfloor$, for comparison. Most noticeably, the lag-based method produces considerably lower batch size estimates for all $\rho$'s, and for all other methods, the average estimated batch sizes are between $\lfloor n^{1/3} \rfloor$ and $\lfloor n^{1/2} \rfloor$. We would then expect $\lfloor n^{1/3} \rfloor$ to yield high bias and for $\lfloor n^{1/2} \rfloor$ to produce estimators with high variability. 

Figure~\ref{fig:var_mse} plots the average MSE across entries of the matrix estimators (in log scale), illustrating how BM and OBM optimal batch sizes lead to smaller MSE than batch sizes $\lfloor n^{1/3} \rfloor$, $\lfloor n^{1/2} \rfloor$, and the lag-based method for the Bartlett window estimators. Our theory discussed in the previous sections agrees with these results. However, for the flat-top window estimators, BM-FT and OBM-FT, $\lfloor n^{1/3} \rfloor$ and the lag-based batch size selection method produces the smallest MSE. It is also apparent from Figure~\ref{fig:var1_batch} these two methods produce smaller batch sizes, which in turn yield smaller MSE for flat-top estimators. This also agrees with our theoretical discussion that MSE decreases as batch sizes decreases for flat-top based estimators. The AR$(m)$-fit and nonparametric methods perform similarly in terms of MSE.

\begin{figure}[htb]
        \centering
        \includegraphics[width=.90\textwidth]{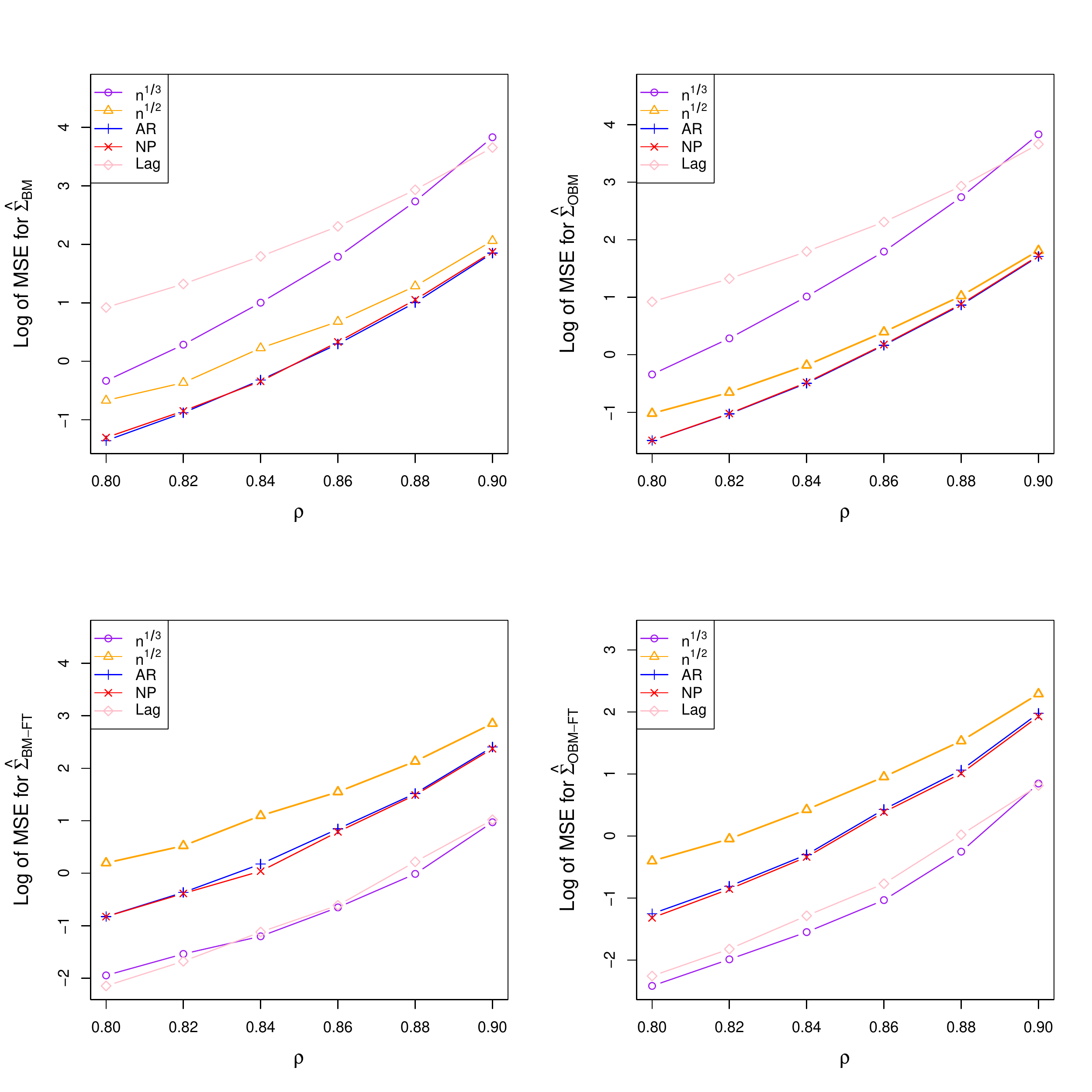}
\caption{Plots of logarithmic MSE for all four estimators using the five batch sizes.}
\label{fig:var_mse}        
\end{figure}


\subsection{Bayesian logistic regression} 
\label{sub:bayes_logistic}
Consider the \textit{Anguilla australis}  data from \cite{elit:leat:2008} available in the \texttt{dismo} R package. The dataset records the presence or absence of the short-finned eel in 1000 sites over New Zealand. Following \cite{leat:elit:2008}, we choose six of the twelve covariates recorded in the data;  SegSumT, DSDist, USNative, DSMaxSlope and DSSlope are continuous and  Method is categorical with five levels.

For $i = 1, \dots, 1000$, let $Y_i$ record the presence ($Y_i = 1$) or absence of \textit{Anguilla australis}. Let $x_i$ denote the vector of covariates for observation $i$. We fit a model with intercept so that the regression coefficient $\beta \in \mathbb{R}^9$. Let 
\[Y_i \mid x_i, \beta \sim \text{Bernoulli} \left( \dfrac{1}{1 + \exp(x_i^T\beta)} \right)  \text{ and } \beta \sim N(0, \sigma^2_{\beta}I_9)\,.\]
We set $\sigma^2_{\beta} = 100$ as in \cite{boon:merr:krac:2014}. The posterior distribution is intractable and we use the \texttt{MCMClogit} function in the R package \texttt{MCMCpack} to obtain posterior samples; this random walk Metropolis-Hastings sampler is geometrically ergodic \citep{vats:fleg:jone:2015output}.

\begin{figure}[]
	\centering
	\includegraphics[width=0.45\textwidth]{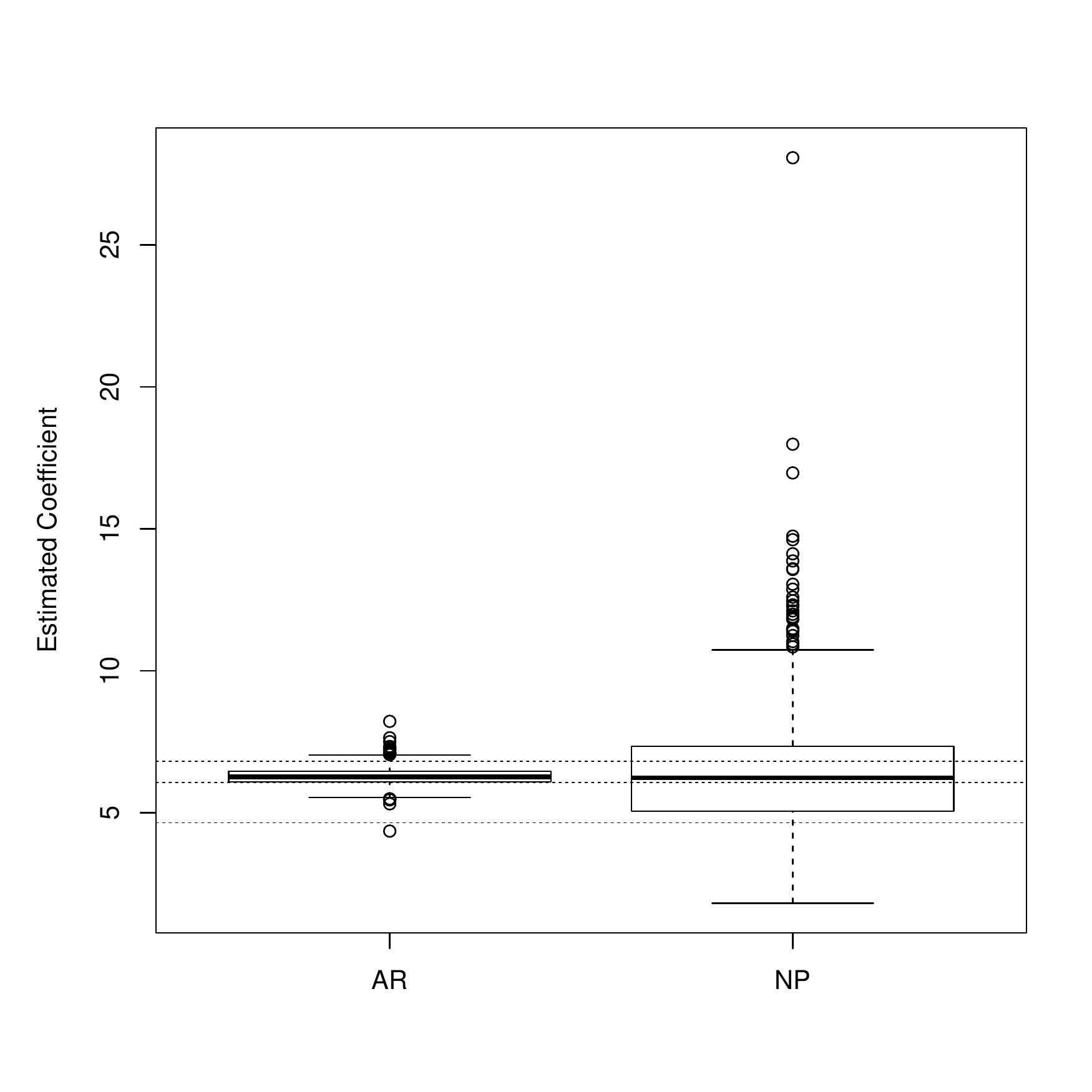}
	\includegraphics[width=0.45\textwidth]{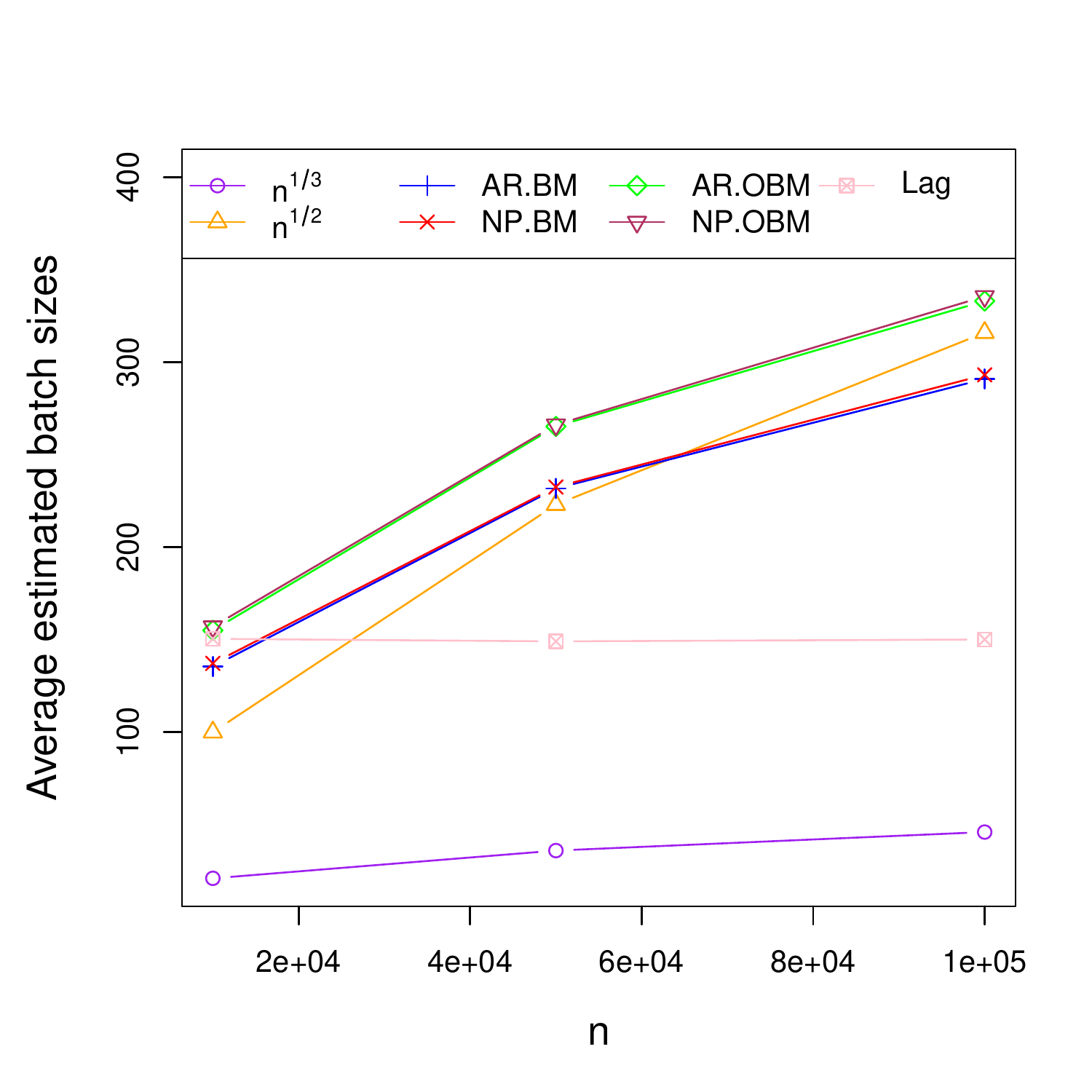}	
	\caption{Boxplot of estimated coefficient of optimal batch size using the AR$(m)$-fit and the nonparametric pilot estimates. The three horizontal lines correspond to $(1e4)^{1/6}, (5e4)^{1/6}$, and $(1e5)^{1/6}$ (left); and estimated batch sizes using the seven methods (right).}
	\label{fig:eel_boxplot_coeff}
\end{figure}

In 1000 replications, we ran a pilot run of length 1e4 to estimate the optimal batch size. We then reran the chain to estimate $\Sigma$ using the estimated optimal batch size for three Monte Carlo sample sizes, $n = 1e4, 5e4,$ and $1e5$. Figure~\ref{fig:eel_boxplot_coeff} presents the variability in the estimates of the coefficient of the optimal batch size and also presents the estimated batch sizes. The AR$(m)$-fit has significantly lower variability compared to the nonparametric pilot estimator. This is particularly useful since a pilot estimator is usually only run once by a user. In addition, since $n^{1/6}$ is close to the estimated coefficients of the optimal batch size, we expect a batch size of $\lfloor n^{1/2} \rfloor$ to perform well for $n = 5e4$ and $n = 1e5$. The plot on the right in Figure~\ref{fig:eel_boxplot_coeff} indicates that for most choices of $n$, $\lfloor n^{1/3} \rfloor$ and the lag-based method yield batch sizes that are relatively small, while the other methods yield high batch sizes. A batch size of $\lfloor n^{1/2} \rfloor$ is fairly close to the estimated batch sizes for these choices of $n$.

Coverage probabilities over the 1000 replications are provided in Table~\ref{tb:coverage_eel}, where the truth is taken to be the average of 1000 MCMC runs of 1e6. Given that the estimated coefficient is significantly larger than 1, it is not surprising that $\lfloor n^{1/3} \rfloor$ performs poorly. For small sample sizes, both the optimal methods have better coverage probabilities. For Monte Carlo sample size $1e5$, as expected, $\lfloor n^{1/2} \rfloor$ fares fairly well. For BM and OBM, the lag-based batch size does not compare well to the optimal methods, but for flat-top based estimators, the lag-based methods perform better than all other methods. The AR$(m)$-fit performs similar to the nonparametric methods based on the coverage probabilities. 

\begin{table}[]
\begin{center}
\begin{tabular}{c|ccccc|ccccc}
\hline
$n$ & $\lfloor n^{1/3} \rfloor $ & $\lfloor n^{1/2} \rfloor $ & AR & NP  & Lag &  $\lfloor n^{1/3} \rfloor $ & $\lfloor n^{1/2} \rfloor $ & AR & NP & Lag \\  \hline
\\

 & \multicolumn{5}{c}{Batch Means} & \multicolumn{5}{c}{Overlapping Batch Means} \\ \hline
1e4 & 0.279  & 0.722  & 0.731  & 0.709  & 0.703  & 0.276  & 0.723  & 0.727  & 0.720 &  0.721 \\
5e4  & 0.499  & 0.826  & 0.823 &  0.831  & 0.808 &  0.494  & 0.832  & 0.837  & 0.831  & 0.813 \\
1e5  & 0.615 &  0.861  & 0.860  & 0.849  & 0.823  & 0.615  & 0.862  & 0.863  & 0.859  & 0.826 \\ \hline
\\
 & \multicolumn{5}{c}{Batch Means - FT} & \multicolumn{5}{c}{Overlapping Batch Means - FT} \\ \hline

1e4  & 0.557  & 0.738  & 0.638  & 0.595  & 0.577  & 0.552  & 0.780  & 0.708  & 0.690  & 0.704 \\
5e4  & 0.753  & 0.814  & 0.829  & 0.820  & 0.849  & 0.760  & 0.854  & 0.851  & 0.842  & 0.876 \\ 
1e5  & 0.825  & 0.857  & 0.854  & 0.854  & 0.882  & 0.827  & 0.877  & 0.877  & 0.880  & 0.887 \\ \hline
\end{tabular}
\caption{Coverage probabilities for $90\%$ confidence regions over 1000 replications for Bayesian logistic regression example.}
\label{tb:coverage_eel}
\end{center}
\end{table}





\subsection{Bayesian dynamic space-time model}
This example considers the Bayesian dynamic model of \cite{finl:bane:gelf:2012} to model monthly temperature data collected at 10 nearby station in northeastern United States in 2000.  A data description can be found in the {\tt spBayes} {\tt R} package \citep{finl:bana:2013}.

Suppose $y_t(s)$ denotes the temperature observed at location $s$ and time $t$ for $s=1,2,..., N_s$ and $t=1,2,..., N_t$. Let $x_t(s)$ be a $m\times 1$ vector of predictors and $\beta_t$ be a $m\times 1$ coefficient vector, which is a purely time component and $u_{t}(s)$ be a space-time component. The model is 
\[ y_t(s) = {\pmb x}_t (s)^{T} {\pmb \beta}_t + u_t (s) + \epsilon_t (s),\ \  \epsilon_t(s) \sim N(0, \tau_t^2),  \]
\[ {\pmb \beta}_t = {\pmb \beta}_{t-1} + {\pmb \eta}_t; \ \ {\pmb \eta}_t \sim N_p (0, \Sigma_{\eta}), \]
\[ u_t(s) = u_{t-1}(s) + w_t(s); \ \ w_t(s) \sim GP (0, C_t(\cdot, \sigma_{t}^{2}, \phi_{t})), \]
where $GP (0, C_t(\cdot, \sigma_{t}^{2},\phi_t))$ is a spatial Gaussian process with $C_t(s_1, s_2; \sigma_t^2,\phi_t) = \sigma_t^2 \rho (s_1, s_2; \phi_t)$, $\rho (\cdot; \phi)$ is an exponential correlation function with $\phi$ controlling the correlation decay, and $\sigma^2_t$ represents the spatial variance components. The Gaussian spatial process allows closer locations to have higher correlations.  Time effect for both $\pmb \beta_t$ and $u_t(s)$ are characterized by transition equations to achieve reasonable dependence structure. The priors on $\theta=(\pmb \beta_t,\ u_t(s),\ \sigma^2_t,\ \Sigma_\eta,\ \tau^2_t,\ \phi_t)$ are the defaults in the {\tt spDynlM} function in the {\tt spBayes} package, and a Metropolis-within-Gibbs sampler is used to sample from the posterior.

The only predictor in the analysis is elevation, hence ${\pmb \beta}_t=(\beta_{t}^{(0)},\beta_{t}^{(1)})^T$ for $t=1,2,...,12,$ where $\beta_{t}^{(0)}$ is the intercept and $\beta_{t}^{(1)}$ is the coefficient for elevation. Consider estimating the coefficient of the covariate for the first two months, $\beta^{(1)}_{1}$ and $\beta^{(1)}_{2}$.

	\begin{figure}[htbp]
	\centering
	\includegraphics[width=0.45\textwidth]{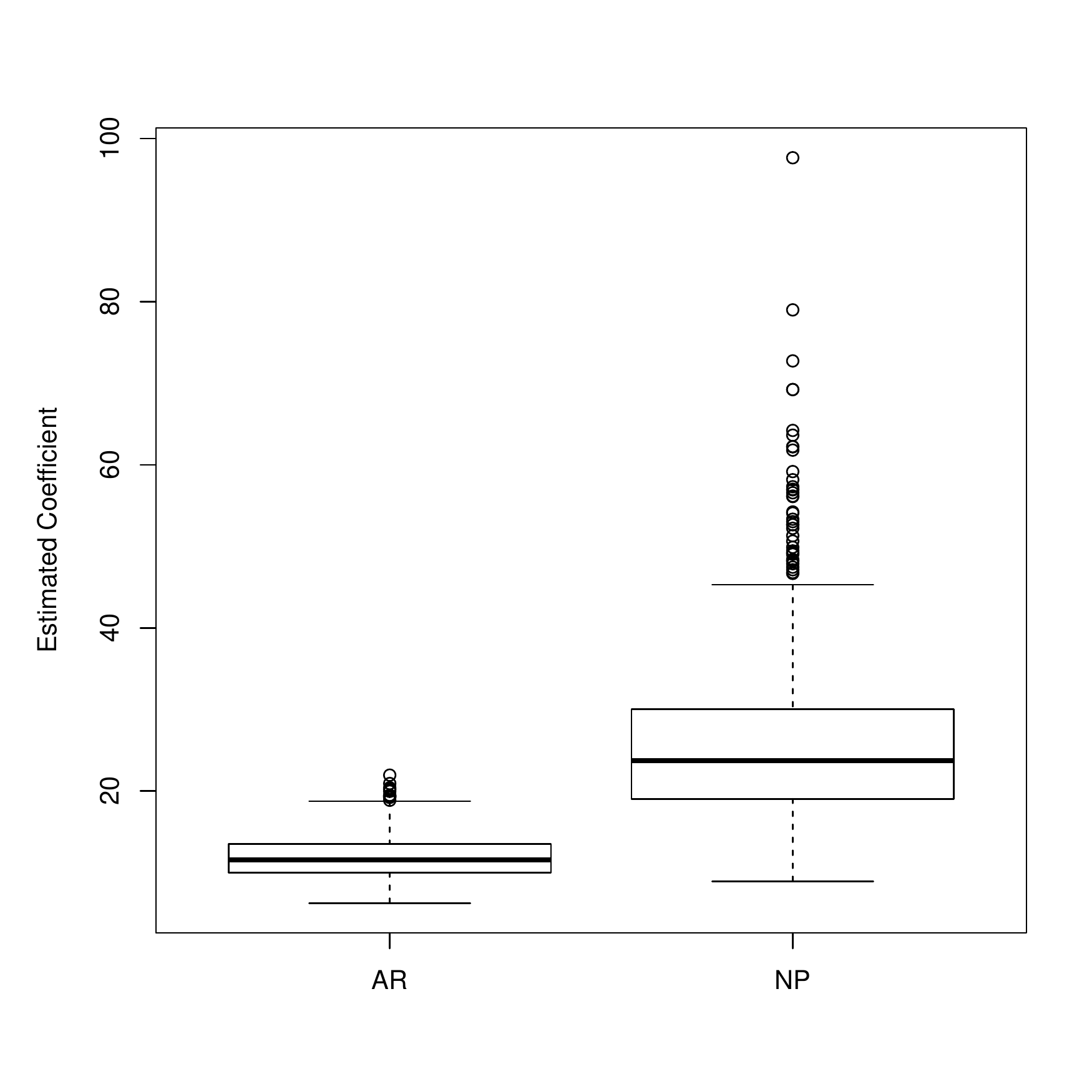}
	   \includegraphics[width=0.45\textwidth]{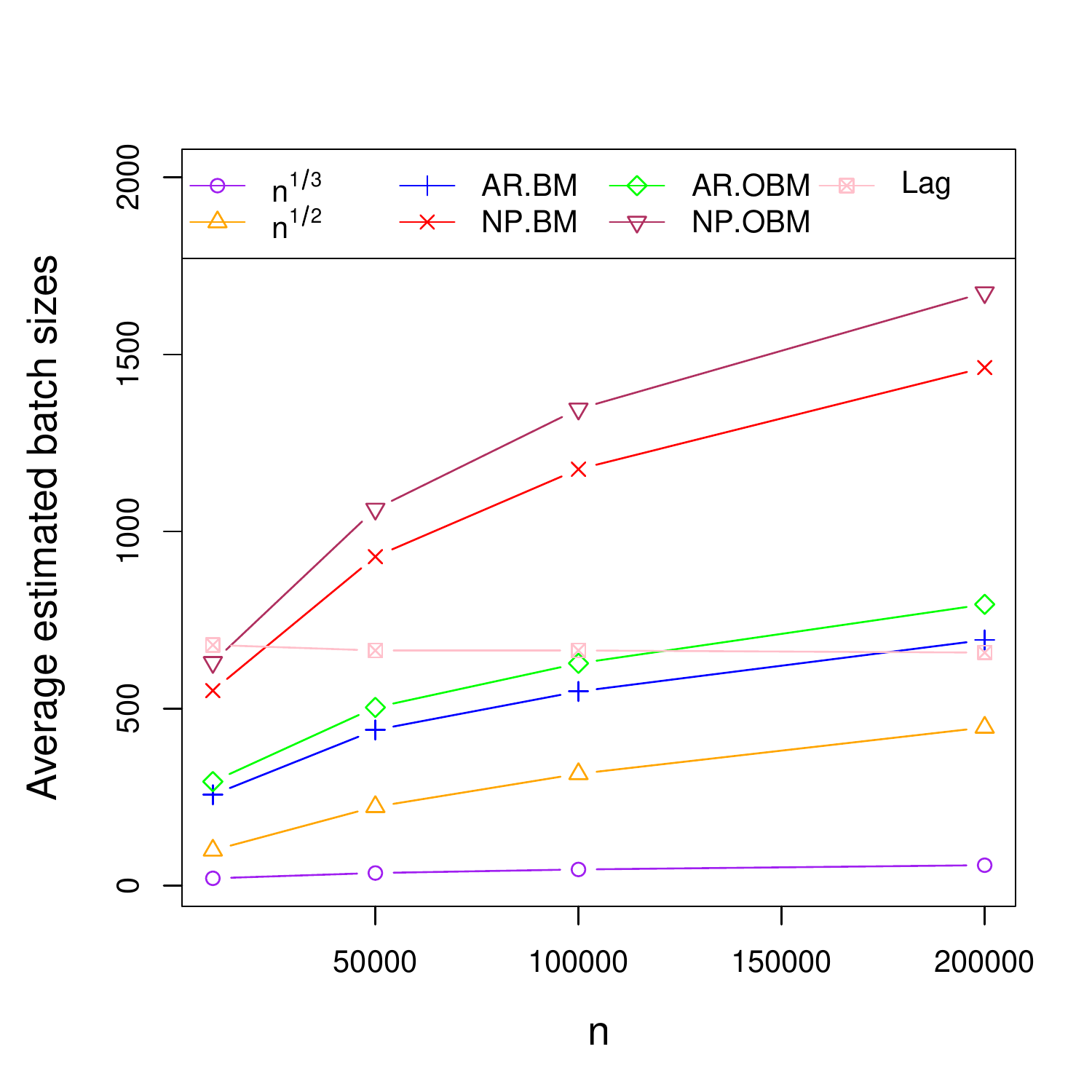}
	\caption{Boxplot of estimated coefficient of optimal batch size using AR$(m)$-fit and nonparametric methods (left); and estimated batch sizes using the seven methods (right).}
	\label{fig:spatial_batch}
\end{figure}

We obtain the true posterior mean of these two components by averaging over 1000 chains of length 1e6. The simulation setup is similar to that in Section~\ref{sub:bayes_logistic}. In Figure~\ref{fig:spatial_batch} we present boxplots of the estimated coefficient of the optimal batch size; here again the variability in the nonparametric estimator is significantly higher, and the central tendency is also significantly higher. In Figure~\ref{fig:spatial_batch} we also present the estimated batch sizes over $n$ for all the methods. The nonparametric method yields batch sizes that are larger than the AR$(m)$-fit. The lag-based method is almost always lower than the OBM optimal batch sizes, and $\lfloor n^{1/3} \rfloor$ and $\lfloor n^{1/2} \rfloor$ are both significantly smaller.

Coverage probabilities over 1000 replications are shown in Table~\ref{tb:coverage_spatial}. Unsurprisingly, the $\lfloor n^{1/3} \rfloor$ and $\lfloor n^{1/2} \rfloor$ do not perform well almost throughout. The nonparametric pilot estimators yield marginally better coverage probabilities and the lag-based methods here do not yield a similar performance as before. We suspect this is due to a shorter pilot run, which may be insufficient to estimate the correlations appropriately. Even so, all methods perform better than currently used batch size choices.

\begin{table}[]
\begin{center}
\begin{tabular}{c|ccccc|ccccc}
\hline
$n$ & $\lfloor n^{1/3} \rfloor $ & $\lfloor n^{1/2} \rfloor $ & AR & NP  & Lag &  $\lfloor n^{1/3} \rfloor $ & $\lfloor n^{1/2} \rfloor $ & AR & NP & Lag \\  \hline
\\

 & \multicolumn{5}{c}{Batch Means} & \multicolumn{5}{c}{Overlapping Batch Means} \\ \hline

1e4 & 0.389  & 0.612  & 0.736  & 0.775  & 0.775  & 0.388  & 0.611  & 0.724  & 0.752  & 0.742 \\
5e4 & 0.439  & 0.732  & 0.804  & 0.842  & 0.816  & 0.440  & 0.728  & 0.806  & 0.841  & 0.810 \\ 
1e5 & 0.477  & 0.764  & 0.820  & 0.841  & 0.810  & 0.477  & 0.761  & 0.819  & 0.839  & 0.811 \\
2e5 & 0.553  & 0.807  & 0.838  & 0.861  & 0.823  & 0.552  & 0.807  & 0.842  & 0.864  & 0.822 \\ \hline
\\
 & \multicolumn{5}{c}{Batch Means - FT} & \multicolumn{5}{c}{Overlapping Batch Means - FT} \\ \hline

1e4  & 0.461  & 0.682  & 0.767  & 0.737  & 0.737  & 0.458  & 0.677  & 0.765  & 0.733  & 0.721 \\
5e4  & 0.537  & 0.790  & 0.854  & 0.849  & 0.844  & 0.536  & 0.788  & 0.854  & 0.859  & 0.853 \\ 
1e5  & 0.557  & 0.818  & 0.851  & 0.851  & 0.848  & 0.559  & 0.824  & 0.851  & 0.851  & 0.841 \\ 
2e5  & 0.630  & 0.855  & 0.872  & 0.879  & 0.862  & 0.628  & 0.854  & 0.879  & 0.884  & 0.862 \\ \hline
\end{tabular}
\caption{Coverage probabilities for $90\%$ confidence regions over 1000 replications for Bayesian dynamic space-time example.}
\label{tb:coverage_spatial}
\end{center}
\end{table}





\section{Discussion} \label{sec:discussion}
This paper provides theoretical evidence and practical guidance for optimal batch size selection in MCMC simulations. Estimators with the proposed optimal batch sizes are shown to have superior performance versus conventional batch sizes. Batch size selection has not been carefully addressed in multivariate MCMC settings even though sampling multivariate posteriors is routine in Bayesian analyses.  

To reduce computational effort, we used a pilot run length of $1e4$ regardless of the total chain length. Performance of the estimators can be improved by longer pilot runs. This choice was a compromise between computation effort and accuracy. Since practitioners often do not use pilot runs, we repeated the Bayesian logistic regression and Bayesian dynamic space-time model simulations without a pilot run (results not shown). That is, we estimate batch sizes and then estimate $\Sigma$, all with the same MCMC sample.  This improves the coverage probabilities from Tables~\ref{tb:coverage_eel} and~\ref{tb:coverage_spatial} almost universally. However the batch sizes are now random and thus the resulting estimators require separate theoretical analyses.

We study three competing methods of estimating the batch sizes, the AR$(m)$-fit, the nonparametric method, and the lag-based method.  All three methods improve upon current batch size selection methods.  However, the lag-based method does not satisfy Assumption~\ref{ass:batch} and will generally not yield a consistent estimator.  Further, both the nonparametric method and the lag-based methods have high variability and are more computationally intensive than the AR$(m)$-fit. Thus, we recommend using the stable and fast AR$(m)$-fit and intend to make this the default in the \texttt{mcmcse} package.

\begin{appendix}
\section*{Appendix}

\section{Proof of Theorem~\ref{thm:mse_obm} } \label{sec:optimal}

This section presents a proof of Theorem~\ref{thm:mse_obm} and the optimal batch size results for the generalized OBM estimators. We will use the fact that
\[
\text{MSE}\left(\hat{\Sigma}_{w, ij} \right) = \left(\text{Bias}(\hat{\Sigma}_{w,ij})  \right)^2 + \Var\left( \hat{\Sigma}_{w, ij}  \right)\,.
\]
The bias and variance results of the generalized OBM estimators are important in their own right, and are presented here separately.  Denote $\lim_{n\rightarrow \infty} f(n)/g(n)=0$ by $f(n)=o(g(n))$. 
\begin{theorem}
	\label{thm:bias_sv}
Suppose $\text{E}_F\|g\|^{2+\delta} < \infty$ for $\delta > 0$ and  $\sum_{k=1}^{b}k\Delta_{2}w_{n}(k)=1$. If $\{X_t\}$ is a polynomially ergodic Markov chain of order $\xi > (2 + \epsilon)(1 + 2/\delta)$ for some $\epsilon > 0$, then 
$$ \text{Bias}\left(\hat{\Sigma}_{w,ij} \right) = \sum_{k=1}^{b}\Delta_2 w_n(k) \Gamma_{ij} + o\left( \dfrac{b}{n} \right) + o\left(\dfrac{1}{b} \right)\,.$$ 
\end{theorem}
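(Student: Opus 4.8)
The plan is to evaluate $\E[\hat{\Sigma}_{w,ij}]$ exactly in terms of the autocovariances $R(\cdot)$ and then extract its leading behavior. Since the bias is a first-moment quantity, the strong invariance principle of Lemma~\ref{lem:sip} plays no role here; the polynomial ergodicity hypothesis enters only to guarantee that the autocovariance series converge with the required weighting. Writing $Z_t = Y_t - \theta$, $\bar{Z}_l(k) = k^{-1}\sum_{t=1}^{k} Z_{l+t}$, and $\bar{Z} = n^{-1}\sum_{t=1}^{n} Z_t$, I would expand the outer product in \eqref{eq:gobm} as
\[
\E\!\left[\bar{Z}_l(k)\bar{Z}_l(k)^T\right] - \E\!\left[\bar{Z}_l(k)\bar{Z}^T\right] - \E\!\left[\bar{Z}\,\bar{Z}_l(k)^T\right] + \E\!\left[\bar{Z}\,\bar{Z}^T\right],
\]
each piece being, by stationarity, a finite linear combination of the $R(h)$; in particular $\E[\bar{Z}_l(k)\bar{Z}_l(k)^T] = k^{-1}\sum_{|h|<k}(1-|h|/k)R(h)$ does not depend on $l$.

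First I would treat the dominant batch term. Summing its $l$-independent value over the $n-k+1$ admissible $l$ and dividing by $n$ gives a contribution $n^{-1}\sum_{k=1}^{b} k(n-k+1)\Delta_2 w_n(k)\,V(k)/k$, where $V(k)=\sum_{|h|<k}(1-|h|/k)R(h)$. Using the decomposition $V(k) = \Sigma - \sum_{|h|\ge k}R(h) - k^{-1}\sum_{|h|<k}|h|R(h)$, the constant part together with the normalization $\sum_{k=1}^{b} k\Delta_2 w_n(k) = 1$ reproduces $\Sigma_{ij}$. The next-order piece is governed by $\sum_{|h|<k}|h|R(h) \to \sum_{h} |h|R(h) = -\Gamma$, where I use $R(-h) = R(h)^T$; carried through the sum with $(n-k+1)/n \to 1$, this is exactly what yields the advertised term $\Gamma_{ij}\sum_{k=1}^{b}\Delta_2 w_n(k)$.

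The remaining and most delicate work is to show that every other contribution is $o(b/n)+o(1/b)$. Two families of terms must be controlled. The first consists of the autocovariance tails $\sum_{|h|\ge k}R(h)$ and the truncation error $\sum_{|h|<k}|h|R(h) + \Gamma$; these are where the hypothesis that $\{X_t\}$ is polynomially ergodic of order $\xi > (2+\epsilon)(1+2/\delta)$ is essential, since combined with $\E_F\|g\|^{2+\delta}<\infty$ and standard covariance bounds for polynomially ergodic chains it forces a decay rate on $\|R(k)\|$ strong enough that $\sum_k |k|\,\|R(k)\| < \infty$, whence both families vanish at the needed rate. The second, and the step I expect to be the main obstacle, consists of the grand-mean cross terms $\E[\bar{Z}_l(k)\bar{Z}^T]$ and $\E[\bar{Z}\,\bar{Z}^T]$, which arise from centering at $\bar{Y}$ rather than $\theta$. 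These do not vanish: their $l$-dependence enters through boundary counts that must be summed against the weights, and after simplification they leave $\Sigma_{ij}$-proportional corrections carried by the weighted moments $\sum_k k^2\Delta_2 w_n(k)$ and $\sum_k k^3\Delta_2 w_n(k)$. Handling these is the crux, since the moment $\sum_k k^2\Delta_2 w_n(k)$ contributes a term of exact order $b/n$ sitting at the boundary of the claimed error; showing it is absorbed into $o(b/n)+o(1/b)$ leans on the window conditions together with Assumption~\ref{ass:batch}.
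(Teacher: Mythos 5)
Your plan is, in substance, the paper's own proof unpacked. The paper compresses your four-term expansion and your decomposition of $V(k)$ into the single asserted display \eqref{eq:biascova}: its right-hand side $\tfrac{n-k}{kn}\bigl(\Sigma_{ij}+\tfrac{n+k}{kn}\Gamma_{ij}+o(k^{-2})\bigr)$ is exactly your $\Sigma$-part, your $-k^{-1}\sum_{|h|<k}|h|R(h)$ part (via $\sum_h |h|R(h)=-\Gamma$), and your mean-centering correction $-\Sigma_{ij}/n$ rolled into one; the proof then sums against the weights $k^{2}\Delta_2 w_n(k)/n$ over the $n-k+1$ values of $l$ and invokes $\sum_k k\Delta_2 w_n(k)=1$, precisely as you do. Where you control the tails and truncation errors through covariance bounds for polynomially ergodic chains, the paper cites Theorem 2 of \cite{vats:fleg:2018} for $|\Gamma_{ij}|<\infty$ --- same substance, and your rate arithmetic is correct: $\xi>(2+\epsilon)(1+2/\delta)$ forces $\sum_k k\|R(k)\|<\infty$, which is exactly why this theorem demands $(2+\epsilon)$ where Lemma~\ref{lem:sip} needs only $(1+\epsilon)$. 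You are also right that the strong invariance principle plays no role in the bias; the paper uses it only for the variance.

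The step you defer --- absorbing the mean-centering correction into $o(b/n)+o(1/b)$ --- is the right thing to worry about, but be aware it cannot be closed as you hope. For the Bartlett window $\sum_k k^{2}\Delta_2 w_n(k)=b$, so the cross terms contribute $-\Sigma_{ij}\,b/n\,(1+o(1))$, and the boundary counting $(n-k+1)(n-k)/n^{2}$ in the dominant term contributes a further $-\Sigma_{ij}(b-1)/n$: the total is of \emph{exact} order $b/n$, and neither the window conditions nor Assumption~\ref{ass:batch} shrinks it. The honest conclusion of your computation is $\mathrm{Bias}(\hat{\Sigma}_{w,ij})=\Gamma_{ij}\sum_k\Delta_2 w_n(k)+O(b/n)+o(1/b)$ with a $\Sigma_{ij}$-proportional $O(b/n)$ coefficient. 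However, the paper's proof performs the identical absorption silently: its first displayed sum equals, for Bartlett, $\Sigma_{ij}\bigl(1-(2b-1)/n+O(b^{2}/n^{2})\bigr)$, yet is asserted to be $\Sigma_{ij}+o(b/n)+o(1/b)$. So your proposal is exactly as complete as the published argument; what you have isolated is a defect in the theorem's error bookkeeping, not in your strategy. The slack is also immaterial downstream: in Theorem~\ref{thm:mse_obm} a $\Sigma_{ij}\,b/n$ bias term contributes only $O(b^{2}/n^{2})$ and $O(1/n)$ to the squared bias, both $o(b/n)+o(1/b)$, so \eqref{eq:obm_mse} and the optimal batch sizes are unaffected. (One typographical slip: your dominant-term contribution should read $n^{-1}\sum_k k(n-k+1)\Delta_2 w_n(k)V(k)$; the trailing $/k$ is spurious, as your own use of the normalization confirms.)
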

\begin{proof}
By \citet[Theorem 2]{vats:fleg:2018}, $|\Gamma_{ij}| < \infty$.  Then under Assumption~\ref{ass:batch}, for all $i$ and $j$,
\begin{equation}\label{eq:biascova}
\Cov[\bar{Y}_{l}^{(i)}(k),\bar{Y}_{l}^{(j)}(k)]-\Cov[\bar{Y}^{(i)},\bar{Y}^{(j)}]=\dfrac{n-k}{kn}\left(\Sigma_{ij}+\dfrac{n+k}{kn}\Gamma_{ij}+o\left(\dfrac{1}{k^2}\right)\right).
\end{equation} 
Since $\sum_{k=1}^{b}k\Delta_{2}w_{n}(k)=1$, by \eqref{eq:biascova}, 
\begin{align*}
\E\left(\hat{\Sigma}_{w,ij}\right)&=\sum_{k=1}^{b}\dfrac{(n-k+1)(n-k)k\Delta_{2}w_{n}(k)}{n^{2}}\cdot\Sigma_{ij} \\
& \quad \quad +\sum_{k=1}^{b}\dfrac{(n-k+1)(n^{2}-k^{2})\Delta_{2}w_{n}(k)}{n^{3}}\cdot\Gamma_{ij}+o\left(\dfrac{b}{n}\right)+o\left(\dfrac{1}{b}\right)\\
&=\Sigma_{ij}+\sum_{k=1}^{b}\dfrac{(n-k+1)(n^{2}-k^{2})\Delta_{2}w_{n}(k)}{n^{3}}\cdot\Gamma_{ij}+o\left(\dfrac{b}{n}\right)+o\left(\dfrac{1}{b}\right)\\
&=\Sigma_{ij}+\sum_{k=1}^{b}\Delta_{2}w_{n}(k)\cdot\Gamma_{ij}+o\left(\dfrac{b}{n}\right)+o\left(\dfrac{1}{b}\right)\,.
\end{align*}
\end{proof}

Next, we obtain $\Var\left( \hat{\Sigma}_{w, ij}  \right)$.  The proof is under a more general strong invariance principle.  For a function $f: \mathsf{X} \to \mathbb{R}^p$,  assume there exists a $p\times p$ lower triangular matrix $L$, a non-negative increasing function $\psi$ on the positive integers, a finite random variable $D$, and a sufficiently rich probability space $\Omega$ such that for almost all $\omega\in \Omega$ and for all $n>n_{0}$,
\begin{equation}\label{eq:sip}
\left\lVert\sum_{t=1}^{n} f(X) - n \E_F f - LB(n)\right\rVert<D(\omega)\psi(n) \quad \text{with probability 1.}
\end{equation}
Under the conditions of Lemma~\ref{lem:sip}, \cite{vats:fleg:jone:2015spec} establish \eqref{eq:sip} with $\psi(n) = n^{1/2 - \lambda}$ for $\lambda >0$.  Appendix~\ref{ap:prelim} contains a number of preliminary results, followed by the proof of Theorem~\ref{thm:var} in Appendix~\ref{proof theor var}.
\begin{theorem}\label{thm:var}
Suppose \eqref{eq:sip} holds for $f = g$ and $f = g^2$ (where the square is element-wise) such that $\E_FD^4 < \infty$ and Assumption~\ref{ass:batch} holds.  If
\begin{enumerate}
\item $
\sum_{k=1}^{b}(\Delta_{2}w_{k})^2 =  O\left(1/b^2\right),
$
\item $b\psi^{2}(n)\log n(\sum_{k=1}^{b}|\Delta_{2}w_{n}(k)|)^{2}\rightarrow 0$, \text{ and}
\item $\psi^{2}(n)\sum_{k=1}^{b}|\Delta_{2}w_{n}(k)|\rightarrow 0$, then,
\end{enumerate}
\begin{align*}
& \Var \left(\hat{\Sigma}_{w,ij}\right)\\
&=[\Sigma_{ii}\Sigma_{jj}+\Sigma_{ij}^{2}]  \bigg[\dfrac{2}{3}\sum_{k=1}^{b}(\Delta_{2}w_{k})^{2}k^{3} \dfrac{1}{n}+2\sum_{t=1}^{b-1}\sum_{u=1}^{b-t}\Delta_{2}w_{u}  \Delta_{2}w_{t+u} \left(\dfrac{2}{3}u^{3}+u^{2}t\right) \dfrac{1}{n}\bigg]+o\left(\dfrac{b}{n}\right)\notag\\
&:=\left([\Sigma_{ii}\Sigma_{jj}+\Sigma_{ij}^{2}] S \dfrac{b}{n}+o(1) \right) \dfrac{b}{n}.
\end{align*}
\end{theorem}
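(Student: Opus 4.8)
The plan is to expand $\Var(\hat{\Sigma}_{w,ij})$ as a double sum over batch indices, replace the centered batch means by their Brownian motion counterparts supplied by the strong invariance principle \eqref{eq:sip}, exploit the resulting Gaussian structure to reduce the fourth moments to products of second moments, and finally evaluate the deterministic combinatorial sum that remains. Writing $U_l(k) = \bar{Y}_l(k) - \bar{Y}$, the estimator is $\hat{\Sigma}_{w,ij} = n^{-1}\sum_{k=1}^b\sum_{l=0}^{n-k} k^2 \Delta_2 w_n(k)\, U_l^{(i)}(k) U_l^{(j)}(k)$, so that
\[
\Var(\hat{\Sigma}_{w,ij}) = \frac{1}{n^2}\sum_{k=1}^b\sum_{k'=1}^b k^2 (k')^2 \Delta_2 w_n(k)\Delta_2 w_n(k') \sum_{l}\sum_{l'}\Cov\!\left(U_l^{(i)}(k) U_l^{(j)}(k),\, U_{l'}^{(i)}(k') U_{l'}^{(j)}(k')\right).
\]
First I would use \eqref{eq:sip} with $f=g$ to write each partial sum as $n\theta + LB(n)$ plus an error of size $D\psi(n)$, so that $U_l(k)$ is close to $\tilde{U}_l(k) := k^{-1}L[B(l+k)-B(l)] - n^{-1}LB(n)$, a centered Gaussian vector with $LL^T = \Sigma$. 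The preliminary estimates in Appendix~\ref{ap:prelim}, which draw on both the $f=g$ and $f=g^2$ versions of \eqref{eq:sip}, would justify replacing $U$ by $\tilde U$ inside the variance at the cost of an $o(b/n)$ remainder, using $\E_F D^4 < \infty$ together with conditions (2) and (3).

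With everything Gaussian, I would next apply Isserlis' theorem to each covariance of products: for centered jointly Gaussian $A,B,C,D$ one has $\Cov(AB,CD) = \E[AC]\E[BD] + \E[AD]\E[BC]$, so the $(k,l,k',l')$ covariance splits into a term carrying $\Sigma_{ii}\Sigma_{jj}$ (the pairing $(i,i),(j,j)$) and a term carrying $\Sigma_{ij}^2$ (the pairing $(i,j),(j,i)$). This works because $\E[\tilde U_l^{(a)}(k)\tilde U_{l'}^{(b)}(k')] = \Sigma_{ab}\, \Omega(l,k;l',k')$ for a scalar overlap factor $\Omega$ that does not depend on the component indices $a,b$. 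The shared factor $[\Sigma_{ii}\Sigma_{jj}+\Sigma_{ij}^2]$ then pulls out, reducing the problem to evaluating $n^{-2}\sum_{k,k'} k^2(k')^2\Delta_2 w_n(k)\Delta_2 w_n(k')\sum_{l,l'}\Omega(l,k;l',k')^2$, where the $-n^{-1}LB(n)$ centering terms inside $\Omega$ contribute only at lower order.

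I would then carry out the combinatorial evaluation. Up to the negligible centering correction, $k k' \Omega(l,k;l',k')$ equals the length of overlap of the intervals $[l,l+k]$ and $[l',l'+k']$, so $k^2(k')^2\Omega^2$ is the squared overlap length. For fixed batch sizes, summing over $l$ at a fixed offset contributes a factor of order $n$, which cancels one power of $n^{-1}$ and produces the advertised $1/n$ scaling; summing the squared trapezoidal overlap over the offset gives, for batch sizes $u \le u+t$, the closed form $\sum_{\delta}\mathrm{overlap}(\delta;u,u+t)^2 = \tfrac{2}{3}u^3 + u^2 t$, with the triangular case $t=0$ giving $\tfrac{2}{3}u^3$. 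Collecting the diagonal $k=k'$ contributions and the symmetric off-diagonal contributions (which come in equal pairs, hence the factor $2$) reproduces exactly $\tfrac{2}{3}\sum_{k=1}^b(\Delta_2 w_k)^2 k^3 + 2\sum_{t=1}^{b-1}\sum_{u=1}^{b-t}\Delta_2 w_u \Delta_2 w_{t+u}(\tfrac{2}{3}u^3+u^2 t)$, and condition (1) forces this bracketed quantity to be of exact order $b$, so the whole expression is $S\, b/n\,(1+o(1))$.

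The main obstacle will be the error analysis in the reduction from $U$ to $\tilde U$. Because $\hat{\Sigma}_{w,ij}$ is quadratic in the data, substituting the strong-invariance decomposition produces cross terms pairing a Brownian increment with the error term $D\psi(n)$, together with pure error-squared terms, and these must be shown to be negligible uniformly over the $O(n)$ shifts $l$ and all batch sizes $k\le b$. Controlling them is precisely where $\E_F D^4<\infty$, the summability condition (2) with its $\log n$ factor (which absorbs the maximal fluctuation of the Brownian increments across the many batches), and condition (3) enter; keeping these remainders below the leading $b/n$ order is the delicate part, whereas the Gaussian moment computation and the overlap combinatorics are essentially bookkeeping.
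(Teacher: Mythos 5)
Your proposal is correct and, in substance, follows the same route as the paper: approximate the centered batch means by Brownian increments via \eqref{eq:sip}, reduce Gaussian fourth moments to products of second moments (the paper's Proposition~\ref{prop:normal4}), evaluate squared-overlap sums to obtain $\tfrac{2}{3}u^3 + u^2t$, and invoke $\E_F D^4 < \infty$ together with conditions (2) and (3) to control the substitution error. Where you differ is the bookkeeping, and your version is cleaner. You expand $\Var(\hat{\Sigma}_{w,ij})$ directly as a double sum of covariances and use Isserlis in the form $\Cov(AB,CD) = \E[AC]\E[BD] + \E[AD]\E[BC]$, so the pairing $\E[AB]\E[CD]$ never enters and the factor $\Sigma_{ii}\Sigma_{jj}+\Sigma_{ij}^2$ times a component-free squared overlap factor $\Omega^2$ appears immediately. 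The paper instead computes $\E[\tilde{\Sigma}_{wL,ij}^2]$ and $(\E[\tilde{\Sigma}_{wL,ij}])^2$ separately (Lemmas~\ref{lemma:A2_var_term} and~\ref{lemma:varw}) and must carry the pure $\Sigma_{ij}^2$ terms (e.g.\ the $\Sigma_{ij}^{2} - 4\Sigma_{ij}^{2}\,c_1b/n$ pieces of $A_2$) through several pages of explicit polynomial summation before they cancel; your closed-form identity $\sum_{\delta}\mathrm{overlap}(\delta;u,u+t)^2 = \tfrac{2}{3}u^3 + u^2 t$ compresses exactly those computations.

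One caveat on the error analysis, which you correctly flag as the delicate part: you assert that replacing $U$ by $\tilde{U}$ inside the variance costs only $o(b/n)$, but the paper's own machinery does not deliver that rate. Lemma~\ref{lemma:Esq0} gives only $\E[\hat{\Sigma}_{w,ij}-\tilde{\Sigma}_{wL,ij}]^{2} \to 0$ (no rate), and the Cauchy--Schwarz argument in Appendix~\ref{proof theor var} therefore produces an additive remainder $\eta = o(1)$ rather than $o(b/n)$; the paper's final display is $(\Sigma_{ii}\Sigma_{jj}+\Sigma_{ij}^2)S\,b/n + o(b/n) + o(1)$. So if you insist on the $o(b/n)$ remainder you promise, you need a quantitative version of Lemma~\ref{lemma:Esq0} (a rate on the $L^2$ distance between $\hat{\Sigma}_{w,ij}$ and $\tilde{\Sigma}_{wL,ij}$), which neither your sketch nor the paper supplies; otherwise you should weaken that step to match what the paper actually proves.
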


\section{Preliminaries} \label{ap:prelim}
\begin{proposition}\label{prop:normal2}
If variable $X$ and $Y$ are jointly normally distributed with
$$\left[ \begin{matrix} X \\ Y \end{matrix} \right] \sim N\left( \begin{bmatrix} 0\\ 0\end{bmatrix},\ \left[\begin{matrix}l_{11}& l_{12}\\ l_{12}& l_{22}\end{matrix}\right]\right),$$
then $E[X^{2}Y^{2}]=2l_{12}^{2}+l_{11}l_{22}$.
\end{proposition}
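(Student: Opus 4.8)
The plan is to reduce the bivariate fourth moment to one-dimensional normal moments by conditioning. First I would record that, since $(X,Y)$ is jointly mean-zero normal, the conditional law of $Y$ given $X$ is again normal, with $E[Y \mid X] = (l_{12}/l_{11})X$ and $\Var(Y \mid X) = l_{22} - l_{12}^2/l_{11}$, valid when $l_{11} > 0$. The degenerate case $l_{11} = 0$ forces $X = 0$ almost surely, so both sides of the identity vanish and the claim is trivial; I would dispose of it first and assume $l_{11} > 0$ thereafter.

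Next I would invoke the tower property to write $E[X^2 Y^2] = E\big[X^2\, E[Y^2 \mid X]\big]$, and expand the inner conditional second moment as $E[Y^2 \mid X] = \Var(Y \mid X) + (E[Y \mid X])^2 = (l_{22} - l_{12}^2/l_{11}) + (l_{12}^2/l_{11}^2)\,X^2$. Substituting and taking the outer expectation yields $E[X^2 Y^2] = (l_{22} - l_{12}^2/l_{11})\,E[X^2] + (l_{12}^2/l_{11}^2)\,E[X^4]$.

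The only remaining ingredients are the marginal moments of $X \sim N(0, l_{11})$, namely $E[X^2] = l_{11}$ and $E[X^4] = 3 l_{11}^2$. Plugging these in, the terms combine as $l_{11} l_{22} - l_{12}^2 + 3 l_{12}^2 = l_{11} l_{22} + 2 l_{12}^2$, which is exactly the asserted value.

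There is essentially no obstacle here: once the conditional moments and the fourth moment of a univariate normal are in hand, the computation is routine, and the only subtlety is the degenerate case handled above. If I wished to avoid even that caveat, a fully symmetric alternative is to apply Isserlis' (Wick's) formula for fourth Gaussian moments, $E[Z_1 Z_2 Z_3 Z_4] = E[Z_1 Z_2]E[Z_3 Z_4] + E[Z_1 Z_3]E[Z_2 Z_4] + E[Z_1 Z_4]E[Z_2 Z_3]$, with $Z_1 = Z_2 = X$ and $Z_3 = Z_4 = Y$, which delivers $l_{11} l_{22} + 2 l_{12}^2$ immediately and without any nondegeneracy assumption.
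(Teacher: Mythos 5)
Your proof is correct. Note that the paper states this proposition without any proof at all: it treats the identity as a standard Gaussian moment fact, and the intended justification is plainly the specialization of its Proposition~2 (Isserlis' formula, cited to the literature) with $Z_1=Z_2=X$, $Z_3=Z_4=Y$, which gives $E[X^2Y^2]=E[X^2]E[Y^2]+2\left(E[XY]\right)^2=l_{11}l_{22}+2l_{12}^2$ in one line. Your primary argument by conditioning is a genuinely different and more self-contained route: it needs only the conditional normal distribution, the tower property, and $E[X^4]=3l_{11}^2$, so it does not presuppose the fourth-order Wick formula. The price is the case split at $l_{11}=0$; there, you should also observe that $l_{12}=\mathrm{Cov}(X,Y)=0$ when $X=0$ almost surely (or by positive semidefiniteness, $l_{12}^2\le l_{11}l_{22}=0$), so that the right-hand side indeed vanishes --- a one-line remark, but worth making explicit. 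Your closing observation that Isserlis' formula yields the result uniformly, with no nondegeneracy caveat, is exactly the route the paper implicitly takes, so your write-up in effect contains both proofs.
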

\begin{proposition}\label{prop:normal4}
\citep{jans:pete:stoi:petr:1987} If $X_{1}$, $X_{2}$, $X_{3}$, and $X_{4}$ are jointly normally distributed with mean 0, then
$$E[X_{1}X_{2}X_{3}X_{4}]=E[X_{1}X_{2}]E[X_{3}X_{4}]+E[X_{1}X_{3}]E[X_{2}X_{4}]+E[X_{1}X_{4}]E[X_{2}X_{3}].$$
\end{proposition}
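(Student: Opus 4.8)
The plan is to recover the mixed fourth moment by differentiating the joint moment generating function of the Gaussian vector; this is the standard route to the Isserlis/Wick identity and specializes cleanly to four coordinates. Write $\mathbf{X}=(X_1,X_2,X_3,X_4)^T \sim N(\mathbf{0},\Sigma)$ with entries $\sigma_{k\ell}=E[X_kX_\ell]$ (equal to the covariances since the mean is zero). Because a Gaussian, degenerate or not, has a finite moment generating function, $M(\mathbf{t})=E[\exp(\mathbf{t}^T\mathbf{X})]=\exp(\tfrac12 \mathbf{t}^T\Sigma\mathbf{t})$ is finite and real-analytic on all of $\R^4$, so differentiation under the expectation is justified and mixed moments are read off at the origin:
\[
E[X_1X_2X_3X_4]=\left.\frac{\partial^4 M}{\partial t_1\,\partial t_2\,\partial t_3\,\partial t_4}\right|_{\mathbf{t}=\mathbf{0}}.
\]
First I would set $Q(\mathbf{t})=\tfrac12\sum_{k,\ell}\sigma_{k\ell}t_kt_\ell$ so that $M=e^{Q}$, and record the facts that drive the computation: $Q(\mathbf{0})=0$; the first partials $\partial_k Q=\sum_\ell \sigma_{k\ell}t_\ell$ vanish at the origin; the second partials $\partial_k\partial_\ell Q=\sigma_{k\ell}$ are constant; and all third- and higher-order partials of the quadratic $Q$ vanish identically.

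Next I would expand $\partial_1\partial_2\partial_3\partial_4 e^{Q}$ by the multivariate Fa\`a di Bruno formula (equivalently by four applications of the product rule) and evaluate at $\mathbf{0}$. For the exponential the formula collapses so that every term is a product of partial derivatives of $Q$ indexed by a set partition of $\{1,2,3,4\}$, each with coefficient one, multiplied by $e^{Q}=1$ at the origin. Any partition block that is a singleton contributes a factor $\partial_kQ|_{\mathbf{0}}=0$ and kills its term; the single block of size four contributes $\partial_1\partial_2\partial_3\partial_4 Q=0$; and any block of size three vanishes for the same reason. Hence only partitions of $\{1,2,3,4\}$ into two blocks of size two survive, and each such partition $\{a,b\}\{c,d\}$ contributes $\sigma_{ab}\sigma_{cd}$. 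The three pairings are $\{1,2\}\{3,4\}$, $\{1,3\}\{2,4\}$, and $\{1,4\}\{2,3\}$, so summing yields
\[
E[X_1X_2X_3X_4]=\sigma_{12}\sigma_{34}+\sigma_{13}\sigma_{24}+\sigma_{14}\sigma_{23},
\]
which is exactly the claimed identity once each $\sigma_{k\ell}$ is rewritten as $E[X_kX_\ell]$.

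The only real obstacle is the combinatorial bookkeeping of the fourth derivative, namely confirming that exactly the three two-by-two pairings survive and that each surviving term carries coefficient one while every term with a singleton or a third-order factor drops out; this is routine but must be tracked carefully. A quicker alternative I would keep in reserve is Gaussian integration by parts (Stein's identity): for mean-zero joint normality, $E[X_1\,h(X_2,X_3,X_4)]=\sum_{j=2}^{4}\Cov(X_1,X_j)\,E[\partial_j h]$, and applying this with $h(x_2,x_3,x_4)=x_2x_3x_4$ produces the three covariance-weighted pairings in a single step. Either route proves Proposition~\ref{prop:normal4}; the moment generating function argument is preferable here because it is self-contained and does not invoke Stein's lemma as an external input.
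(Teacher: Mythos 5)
Your proof is correct, but note that the paper does not actually prove Proposition~\ref{prop:normal4}: it is stated as a known result with a citation to \citet{jans:pete:stoi:petr:1987} and imported as an external input, so there is no internal argument to compare against. Your moment generating function route is the standard self-contained derivation of the Isserlis--Wick identity, and every step checks out: the Gaussian MGF $M(\mathbf{t})=\exp\bigl(\tfrac12\mathbf{t}^{T}\Sigma\mathbf{t}\bigr)$ is finite and real-analytic on all of $\mathbb{R}^{4}$ (including the degenerate case, which matters in this paper since the relevant $4\times 4$ covariance matrices built from $\bar{C}_{l}(k)-\bar{C}$ terms need not be full rank), so mixed moments are legitimately read off as derivatives at the origin; and in the Fa\`a di Bruno expansion of $\partial_{1}\partial_{2}\partial_{3}\partial_{4}e^{Q}$ over set partitions of $\{1,2,3,4\}$, singleton blocks vanish because $\nabla Q(\mathbf{0})=\mathbf{0}$, blocks of size three or four vanish because $Q$ is quadratic, and each of the three pair partitions carries coefficient one, yielding $\sigma_{12}\sigma_{34}+\sigma_{13}\sigma_{24}+\sigma_{14}\sigma_{23}$ exactly as claimed. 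The Stein identity alternative you hold in reserve, $\E[X_{1}h(X_{2},X_{3},X_{4})]=\sum_{j=2}^{4}\Cov(X_{1},X_{j})\E[\partial_{j}h]$ with $h(x_{2},x_{3},x_{4})=x_{2}x_{3}x_{4}$, is also valid and somewhat quicker, though it invokes an external lemma of comparable depth. In short, the paper's citation buys brevity at the cost of an external dependency, while your argument makes the appendix self-contained at the cost of a page of routine combinatorial bookkeeping, which you track correctly.
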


Recall $B=\{B(t),t\geq 0\}$ is a $p$-dimensional standard Brownian motion. Let $B^{(i)}(t)$ be the $i$th component of vector $B(t)$. Denote $\bar{B}=n^{-1}B(n)$, $\bar{B}_{l}(k)=k^{-1}[B(l+k)-B(l)]$.  Let $\Sigma = LL^T$, where $L$ is a lower triangular matrix. Let $C(t)=LB(t)$ and $C^{(i)}(t)$ be the $i$th component of $C(t)$. Suppose $\bar{C}^{(i)}_{l}(k)=k^{-1}(C^{(i)}(l+k)-C^{(i)}(l))$, and $\bar{C}^{(i)}=n^{-1}C^{(i)}(n)$.

We now present some specific preliminary results and notation for the proof of Theorem~\ref{thm:var}.  For $0 < c_2 < c_1 < 1$, let
\begin{equation*}
A_{2}=\dfrac{(c_1b)^{2}}{n^{2}}\E\left[\left[\sum_{l=0}^{n-c_1b} \left(\bar{C}_{l}^{(i)}(c_1b)-\bar{C}^{(i)} \right) \left(\bar{C}_{l}^{(j)}(c_1b)-\bar{C}^{(j)} \right)\right]^{2}\right], \text{ and}	
\end{equation*}
\begin{equation*}
A_3 = -\dfrac{c_1c_2b^{2}}{n^{2}} \E \left[\sum_{l=0}^{n-c_1b}(\bar{C}_{l}^{(i)}(c_1b)-\bar{C}^{(i)})(\bar{C}_{l}^{(j)}(c_1b)-\bar{C}^{(j)})\right]\left[\sum_{l=0}^{n-c_2b}(\bar{C}_{l}^{(i)}(c_2b)-\bar{C}^{(i)})(\bar{C}_{l}^{(j)}(c_2b)-\bar{C}^{(j)})\right].
\end{equation*}

\begin{lemma}
	\label{lemma:A2_var_term}
For $0 < c_2 < c_1 < 1$, 
\begin{equation*}
A_{2} = \left[\dfrac{2}{3}(\Sigma_{ii}\Sigma_{jj}+\Sigma_{ij}^{2})\cdot\dfrac{c_1b}{n}+\Sigma_{ij}^{2}-4\Sigma_{ij}^{2}\cdot\dfrac{c_1b}{n}\right]+o\left(\dfrac{b}{n}\right) \text{ and },
\end{equation*}
\begin{equation*}
A_{3} = \dfrac{(c_2 - 3c_1)c_2}{3c_1}(\Sigma_{ii}\Sigma_{jj}  + \Sigma_{ij}^{2})\cdot \dfrac{b}{n} + 2\left(c_1+c_2\right)\cdot\Sigma_{ij}^{2}\cdot\dfrac{b}{n} - \Sigma_{ij}^{2} +o\left(\dfrac{b}{n}\right)
\end{equation*}
\end{lemma}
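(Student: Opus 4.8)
The plan is to exploit the exact Gaussianity of $C(t) = LB(t)$. Since $B$ is a standard Brownian motion and $\Sigma = LL^T$, the process $C$ is mean-zero Gaussian with independent increments and $\E[C^{(a)}(s)C^{(c)}(t)] = \Sigma_{ac}\min(s,t)$. Hence all the centered batch means $U_l^{(a)} := \bar{C}_l^{(a)}(c_1 b) - \bar{C}^{(a)}$ (and, for $A_3$, the length-$c_2 b$ analogues $V_{l'}^{(a)} := \bar{C}_{l'}^{(a)}(c_2 b) - \bar{C}^{(a)}$) are jointly mean-zero Gaussian. Writing $A_2 = (c_1 b)^2 n^{-2}\sum_{l,l'}\E[U_l^{(i)}U_l^{(j)}U_{l'}^{(i)}U_{l'}^{(j)}]$ and applying the fourth-moment identity of Proposition~\ref{prop:normal4}, each summand splits into three products of pairwise covariances, which reduces the entire problem to second moments; the bivariate special case Proposition~\ref{prop:normal2} covers the diagonal $l=l'$ terms.

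First I would compute these pairwise covariances from the Brownian structure. Using bilinearity and $\E[C^{(a)}(s)C^{(c)}(t)] = \Sigma_{ac}\min(s,t)$, the three grand-mean cross terms collapse (each contributes $\pm\Sigma_{ac}/n$, netting $-\Sigma_{ac}/n$), leaving $\E[U_l^{(a)}U_{l'}^{(c)}] = \Sigma_{ac}\bigl[\,o_1(l,l')/(c_1 b)^2 - 1/n\,\bigr]$, where $o_1$ is the overlap length of $[l,l+c_1b]$ and $[l',l'+c_1b]$; the cross-batch covariance $\E[U_l^{(a)}V_{l'}^{(c)}]$ is identical with overlap of $[l,l+c_1b]$ and $[l',l'+c_2b]$ divided by $(c_1b)(c_2b)$. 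The diagonal cases give $\E[U_l^{(a)}U_l^{(c)}] = \Sigma_{ac}\bigl(1/(c_1b) - 1/n\bigr)$ and $\E[V_{l'}^{(a)}V_{l'}^{(c)}] = \Sigma_{ac}\bigl(1/(c_2b) - 1/n\bigr)$.

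Substituting into the Wick expansion, the pairing joining the two $l$-indices to the two $l'$-indices yields $\Sigma_{ij}^2$ times a product of diagonal covariances, which is constant in $(l,l')$; summing over the $\approx n^2$ pairs and multiplying by the prefactor produces the $\Sigma_{ij}^2 - 4\Sigma_{ij}^2\,c_1 b/n$ piece of $A_2$ and the $-\Sigma_{ij}^2 + 2(c_1+c_2)\Sigma_{ij}^2\,b/n$ piece of $A_3$. The other two pairings each carry a factor $(\Sigma_{ii}\Sigma_{jj}+\Sigma_{ij}^2)$ multiplying $\sum_{l,l'}(\text{overlap term})^2$. The remaining work is the combinatorial sums $\sum_{l,l'}o$ and $\sum_{l,l'}o^2$, which I would evaluate by grouping on the lag $\delta = l - l'$, writing them as $\approx n\sum_\delta o(\delta)$ and $\approx n\sum_\delta o(\delta)^2$ and recognizing $o(\delta)$ as a triangle (equal batches) or trapezoid (unequal batches). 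For $A_2$ this gives $\sum_\delta o \approx (c_1b)^2$ and $\sum_\delta o^2 \approx \tfrac{2}{3}(c_1b)^3$, producing the $\tfrac{2}{3}(\Sigma_{ii}\Sigma_{jj}+\Sigma_{ij}^2)c_1b/n$ term; for $A_3$ the trapezoid gives $\sum_\delta o \approx (c_1b)(c_2b)$ and $\sum_\delta o^2 \approx (c_1b)(c_2b)^2 - \tfrac{1}{3}(c_2b)^3$, producing the $\tfrac{(c_2-3c_1)c_2}{3c_1}(\Sigma_{ii}\Sigma_{jj}+\Sigma_{ij}^2)b/n$ term after multiplying by the prefactor.

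The hard part will be the careful bookkeeping of orders: each combinatorial sum must be expanded to one order beyond leading so the $b/n$ coefficients emerge exactly, while every discarded remainder (the $O(b^2/n^2)$ cross terms, the $O(1/n)$ boundary pieces, and the errors from replacing $n - c_i b + 1$ by $n$ and from passing from lag sums to the triangle/trapezoid areas) must be verified to be genuinely $o(b/n)$. Because $b\to\infty$ with $b/n\to0$ under Assumption~\ref{ass:batch}, terms of size $1/n$ and $b^2/n^2$ are indeed negligible relative to $b/n$, but keeping the unequal summation ranges for $U$ and $V$ aligned in $A_3$ is where the calculation is most delicate.
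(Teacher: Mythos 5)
Your proposal is correct, and it reaches the stated expansions for $A_2$ and $A_3$ through the same probabilistic core as the paper — joint Gaussianity of the centered Brownian batch means, the fourth-moment identities of Propositions~\ref{prop:normal4} and~\ref{prop:normal2}, and order-$b/n$ bookkeeping — but with a genuinely different organization of the computation. The paper splits each double sum by lag range (diagonal, overlapping, and disjoint batch pairs: $a_1,a_2,a_3$ for $A_2$ and the $OL^{(i)}OL^{(j)}$ term, $a_4,a_5$ for $A_3$), derives a separate joint normal law for each case, and evaluates each piece by explicit polynomial expansion. You instead apply the Wick expansion uniformly to every pair $(l,l')$, collapse all of the paper's case-by-case covariance computations into the single overlap formula $\E[U_l^{(a)}U_{l'}^{(c)}]=\Sigma_{ac}\left[o/(kk')-1/n\right]$ (which indeed reproduces \eqref{eq:Ei=j}, \eqref{eq:Ccova3}, \eqref{eq:repeat2}, \eqref{eq:Ei=ja4}, and \eqref{eq:Ei=ja5} as special values of the overlap $o$), and then evaluate the surviving sums by grouping on the lag $\delta$ and using the triangle/trapezoid geometry of $o(\delta)$. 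I verified the key numbers: the constant Wick pairing gives $\Sigma_{ij}^2-4\Sigma_{ij}^2c_1b/n$ for $A_2$ and, with the minus sign in the definition of $A_3$ correctly absorbed, $-\Sigma_{ij}^2+2(c_1+c_2)\Sigma_{ij}^2 b/n$; and the geometric sums $\sum_\delta o(\delta)^2\approx\tfrac{2}{3}(c_1b)^3$ (triangle) and $\sum_\delta o(\delta)^2\approx (c_1b)(c_2b)^2-\tfrac{1}{3}(c_2b)^3$ (trapezoid) produce exactly $\tfrac{2}{3}(\Sigma_{ii}\Sigma_{jj}+\Sigma_{ij}^2)c_1b/n$ and $\tfrac{(c_2-3c_1)c_2}{3c_1}(\Sigma_{ii}\Sigma_{jj}+\Sigma_{ij}^2)b/n$ after the prefactors. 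What your route buys is economy and transparency: five covariance cases become one formula, and the constants $2/3$ and $(c_2-3c_1)c_2/(3c_1)$ acquire a geometric meaning. What the paper's route buys is that boundary terms and summation ranges are explicit at every step — exactly the point you flag as delicate; to finish your writeup you must confirm that the lag-count corrections (of order $b$ per lag, over $O(b)$ lags, against terms of size $O(b^{-2})$, then scaled by a prefactor of order $b^2/n^2$) and the range mismatch $0\le l\le n-c_1b$ versus $0\le l'\le n-c_2b$ in $A_3$ contribute only $o(b/n)$, which they do under Assumption~\ref{ass:batch}.
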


\begin{proof}
Denote
\begin{equation}\label{eq:a1}
a_1=\sum_{l=0}^{n-c_1b} \left(\bar{C}_{l}^{(i)}(c_1b)-\bar{C}^{(i)} \right)^{2} \left(\bar{C}_{l}^{(j)}(c_1b)-\bar{C}^{(j)} \right)^{2},
\end{equation}
\begin{equation}\label{eq:a2}
a_2=\sum_{s=1}^{c_1b-1}\sum_{l=0}^{n-c_1b-s}(\bar{C}_{l}^{(i)}(c_1b)-\bar{C}^{(i)})(\bar{C}_{l}^{(j)}(c_1b)-\bar{C}^{(j)})(\bar{C}^{(i)}_{l+s}(c_1b)-\bar{C}^{(i)})(\bar{C}^{(j)}_{l+s}(c_1b)-\bar{C}^{(j)}),
\end{equation}
\begin{equation*}\label{eq:a3}
a_3=\sum_{s=b}^{n-c_1b}\sum_{l=0}^{n-c_1b-s} \left(\bar{C}_{l}^{(i)}(c_1b)-\bar{C}^{(i)} \right) \left(\bar{C}_{l}^{(j)}(c_1b)-\bar{C}^{(j)} \right) \left(\bar{C}^{(i)}_{l+s}(c_1b)-\bar{C}^{(i)} \right) \left(\bar{C}^{(j)}_{l+s}(c_1b)-\bar{C}^{(j)} \right).
\end{equation*}
Then
\begin{align}\label{eq:A1express}
A_{1}&=\dfrac{c_1b^{2}}{n^{2}}\E \left[\sum_{l=0}^{n-c_1b} \left(\bar{C}_{l}^{(i)}(c_1b)-\bar{C}^{(i)} \right)^{2} \left(\bar{C}_{l}^{(j)}(c_1b)-\bar{C}^{(j)} \right)^{2}\right.\nonumber\\
&+2\sum_{s=1}^{c_1b-1}\sum_{l=0}^{n-c_1b-s}(\bar{C}_{l}^{(i)}(c_1b)-\bar{C}^{(i)})(\bar{C}_{l}^{(j)}(c_1b)-\bar{C}^{(j)})(\bar{C}^{(i)}_{l+s}(c_1b)-\bar{C}^{(i)})(\bar{C}^{(j)}_{l+s}(c_1b)-\bar{C}^{(j)})\nonumber\\
&+\left.2\sum_{s=b}^{n-c_1b}\sum_{l=0}^{n-c_1b-s} \left(\bar{C}_{l}^{(i)}(c_1b)-\bar{C}^{(i)} \right) \left(\bar{C}_{l}^{(j)}(c_1b)-\bar{C}^{(j)} \right) \left(\bar{C}^{(i)}_{l+s}(c_1b)-\bar{C}^{(i)} \right) \left(\bar{C}^{(j)}_{l+s}(c_1b)-\bar{C}^{(j)} \right) \right].\nonumber\\
&=\dfrac{c_1b^{2}}{n^{2}}E[a_1+2a_2+2a_3].
\end{align}
First we calculate $E[a_1]$ at \eqref{eq:a1}. Let $U_{t}^{(i)}=B^{(i)}(t)-B^{(i)}(t-1)$, then $U_{t}^{(i)}\distas{iid}N(0,1)$ for $t=1,2,...,n$ and
$$\bar{B}_{l}^{(i)}(c_1b)-\bar{B}^{(i)}=\dfrac{(n-c_1b)}{nc_1b}\sum_{t=l+1}^{l + c_1b}U^{(i)}_{t}-\dfrac{1}{n}\sum_{t=1}^{l}U_{t}^{(i)}-\dfrac{1}{n}\sum_{t=l+c_1b+1}^{n}U_{t}^{(i)}.$$
Notice that $\E[\bar{B}_{l}^{(i)}(c_1b)-\bar{B}^{(i)}]=0$ for $l=0,...,(n-c_1b)$ and 
$$\Var[\bar{B}_{l}^{(i)}(c_1b)-\bar{B}^{(i)}]=\Big(\dfrac{n-c_1b}{nc_1b}\Big)^{2}c_1b+\dfrac{n-c_1b}{n^{2}}=\dfrac{n-c_1b}{c_1bn},$$
therefore
$$\bar{B}^{(i)}_{l}(c_1b)-\bar{B}^{(i)}\sim N\left(0,\ \dfrac{n-c_1b}{c_1bn}\right)$$
and
$$\bar{B}_{l}(c_1b)-\bar{B}_{n}\sim N\left(0,\ \dfrac{n-c_1b}{c_1bn}I_{p}\right),$$
hence
\begin{equation}\label{eq:Cdistr}
\bar{C}_{l}(c_1b)-\bar{C}_{n}=L(\bar{B}_{l}-\bar{B}_{n})\sim N\left(0,\ \dfrac{n-c_1b}{c_1bn}LL^{T}\right).
\end{equation}
Now consider $\E[(\bar{C}_{l}^{(i)}(c_1b)-\bar{C}^{(i)})^{2}(\bar{C}_{l}^{(j)}(c_1b)-\bar{C}^{(j)})^{2}]:= \E[Z_{i}^{2}Z_{j}^{2}]$ where $Z_{i}=\bar{C}_{l}^{(i)}(c_1b)-\bar{C}^{(i)}$ and $Z_{j}=\bar{C}_{l}^{(j)}(c_1b)-\bar{C}^{(j)}$. Recall $\Sigma=LL^{T}$, then
$$\left[ \begin{matrix} Z_{i} \\ Z_{j} \end{matrix} \right] \sim N\left( \begin{bmatrix} 0\\ 0\end{bmatrix},\ \dfrac{n-c_1b}{c_1bn}\left[\begin{matrix}\Sigma_{ii}& \Sigma_{ij}\\ \Sigma_{ij}& \Sigma_{jj}\end{matrix}\right]\right).$$
Apply Proposition~\ref{prop:normal2},
\begin{align}\label{eq:a1eq1}
 \E\left[ \left(\bar{C}_{l}^{(i)}(c_1b)-\bar{C}^{(i)} \right)^{2} \left(\bar{C}_{l}^{(j)}(c_1b)-\bar{C}^{(j)} \right)^{2} \right] &=2\left(\dfrac{n-c_1b}{c_1bn}\Sigma_{ij}\right)^{2}+\left(\dfrac{n-c_1b}{c_1bn}\Sigma_{ii}\right)\left(\dfrac{n-c_1b}{c_1bn}\Sigma_{jj}\right)\nonumber\\
&=\left(\dfrac{n-c_1b}{c_1bn}\right)^{2}[\Sigma_{ij}^{2}+\Sigma_{ii}\Sigma_{jj}]+\left(\dfrac{n-c_1b}{c_1bn}\right)^{2}\Sigma^{2}_{ij}.
\end{align}
Replace \eqref{eq:a1eq1} in \eqref{eq:a1}
\begin{align}\label{eq:a1expressionfinal}
\E[a_1]&=\sum_{l=0}^{n-c_1b}\E \left[ \left(\bar{C}_{l}^{(i)}(c_1b)-\bar{C}^{(i)} \right)^2 \left(\bar{C}_{l}^{(j)}(c_1b)-\bar{C}^{(j)} \right)^2 \right]\nonumber\\
&=(n-c_1b+1)\left(\dfrac{n-c_1b}{c_1bn}\right)^{2}(\Sigma_{ij}^{2}+\Sigma_{ii}\Sigma_{jj})+\sum_{l=0}^{n-c_1b}\left(\dfrac{n-c_1b}{c_1bn}\right)^{2}\Sigma_{ij}^{2}\nonumber\\
&=\sum_{l=0}^{n-c_1b}\left(\dfrac{n-c_1b}{c_1bn}\right)^{2}\Sigma_{ij}^{2}+o\left(\dfrac{n}{b}\right).
\end{align}
To calculate $\E[a_2]$ for $s=1,2,..., (c_1b-1)$, we require
\[
\E \left[ \left(\bar{C}_{l}^{(i)}(c_1b)-\bar{C}^{(i)} \right) \left(\bar{C}_{l}^{(j)}(c_1b)-\bar{C}^{(j)} \right) \left(\bar{C}^{(i)}_{l+s}(c_1b)-\bar{C}^{(i)} \right) \left(\bar{C}^{(j)}_{l+s}(c_1b)-\bar{C}^{(j)} \right) \right]\,.
\]
%
Notice that 
\begin{align*}
 \text{Cov}(\bar{C}_{l}(c_1b)-\bar{C}_n,\ \bar{C}_{l+s}(c_1b)-\bar{C}_n)\nonumber &=\E \left[ \left(\bar{C}_{l}(c_1b)-\bar{C}_n \right) \left(\bar{C}_{l+s}(c_1b)-\bar{C}_n \right)^{T} \right]\nonumber\\
&=L\cdot \E \left[ \left(\bar{B}_{l}(c_1b)-\bar{B}_n \right) \left(\bar{B}_{l+s}(c_1b)-\bar{B}_n \right)^{T} \right]\cdot L^{T}.
\end{align*}
Consider each entry of $E[(\bar{B}_{l}(c_1b)-\bar{B})(\bar{B}_{l+s}(c_1b)-\bar{B})^{T}]$.
For $i\neq j$,
\begin{equation}
\E\left[\bar{B}^{(i)}_{l}(c_1b)-\bar{B}^{(i)} \right] \left[\bar{B}^{(j)}_{l+s}(c_1b)-\bar{B}^{(j)} \right] = \E \left[\bar{B}^{(i)}_{l}(c_1b)-\bar{B}^{(i)} \right]\cdot \E \left[\bar{B}^{(j)}_{l+s}(c_1b)-\bar{B}^{(j)} \right]=0. \label{eq:Ei!=j}
\end{equation}
For $i=j$, we require $\E[\bar{B}^{(i)}_{l}(c_1b)-\bar{B}^{(i)}][\bar{B}^{(i)}_{l+s}(c_1b)-\bar{B}^{(i)}]$. 
\begin{align*}
&\E \left[\bar{B}^{(i)}_{l}(c_1b)-\bar{B}^{(i)} \right] \left[\bar{B}^{(i)}_{l+s}(c_1b)-\bar{B}^{(i)} \right] \\
& =  \E \left[\bar{B}^{(i)}_l(c_1b) \bar{B}^{(i)}_{l+s}(c_1b)  \right] + \E \left[\bar{B}^{(i)} \bar{B}^{(i)} \right] - \E \left[ \bar{B}^{(i)} \bar{B}^{(i)}_{l+s}(c_1b)  \right] - \E \left[ \bar{B}^{(i)} \bar{B}^{(i)}_{l}(c_1b)  \right] \\ 
& = \dfrac{1}{c_1b^2} \E\left[ \left(B^{(i)}(l+c_1b) - B^{(i)}(l) \right) \left(B^{(i)}(l+s+c_1b) - B^{(i)}(l + s) \right) \right] \\
& \quad + \dfrac{1}{n^2}\E\left[ \left(B^{(i)}(n) \right)^2 \right] - \dfrac{1}{nc_1b}\E\left[B^{(i)}(n) \left( B^{(i)} (l+c_1b+s) - B^{(i)}(l+s)\right) \right]\\ 
& \quad - \dfrac{1}{nc_1b}\E\left[B^{(i)}(n) \left( B^{(i)} (l+c_1b) - B^{(i)}(l)\right) \right] \\ 
& = \dfrac{c_1b - s}{c_1^2b^2} + \dfrac{1}{n} - \dfrac{2}{n}\\ & = \dfrac{n - c_1b}{c_1bn} - \dfrac{s}{n^2}\,. \numberthis \label{eq:Ei=j}
\end{align*}

Combine \eqref{eq:Ei=j} and \eqref{eq:Ei!=j},
\begin{equation*}
\text{Cov} \left(\bar{C}_{l}(c_1b)-\bar{C}_n,\ \bar{C}_{l+s}(c_1b)-\bar{C}_n \right)=L\cdot\left(\dfrac{n-c_1b}{c_1bn}-\dfrac{s}{c_1^2b^{2}}\right)I_{p}\cdot L^{T}=\left(\dfrac{n-c_1b}{c_1bn}-\dfrac{s}{c_1^2b^{2}}\right)\cdot\Sigma.
\end{equation*} 
Given \eqref{eq:Cdistr}, \eqref{eq:Ei!=j} and \eqref{eq:Ei=j} and let $Z_{1}=\bar{C}_{l}(c_1b)^{(i)}-\bar{C}^{(i)}_n$, $Z_{2}=\bar{C}_{l}(c_1b)^{(j)}-\bar{C}^{(j)}_n$, $Z_{3}=\bar{C}_{l+s}(c_1b)^{(i)}-\bar{C}^{(i)}_n$, $Z_{4}=\bar{C}_{l+s}(c_1b)^{(j)}-\bar{C}^{(j)}_n$, $(Z_1, Z_2, Z_3, Z_4)^T$ has a $4$-dimensional Normal distribution with mean 0, and covariance matrix,
$$  \left[\begin{matrix}\left(\dfrac{n-c_1b}{c_1bn}\right)\Sigma_{ii}& \left(\dfrac{n-c_1b}{c_1bn}\right)\Sigma_{ij} &\left(\dfrac{n-c_1b}{c_1bn}-\dfrac{s}{c_1^2b^{2}}\right)\Sigma_{ii}&\left(\dfrac{n-c_1b}{c_1bn}-\dfrac{s}{c_1^2b^{2}}\right)\Sigma_{ij}\\ & \left(\dfrac{n-c_1b}{c_1bn}\right)\Sigma_{jj}& \left(\dfrac{n-c_1b}{c_1bn}-\dfrac{s}{c_1^2b^{2}}\right)\Sigma_{ij}&\left(\dfrac{n-c_1b}{c_1bn}-\dfrac{s}{c_1^2b^{2}}\right)\Sigma_{jj}\\ &  &\left(\dfrac{n-c_1b}{c_1bn}\right)\Sigma_{ii}&\left(\dfrac{n-c_1b}{c_1bn}\right)\Sigma_{ij} \\&&&\left(\dfrac{n-c_1b}{c_1bn}\right)\Sigma_{jj}\end{matrix}\right].$$
Only upper triangle entries are presented due to symmetry of the matrix. By Proposition~\ref{prop:normal4},
\begin{align*}
& \E[Z_{1}Z_{2}Z_{3}Z_{4}]\\
&= \E[Z_{1}Z_{2}]\cdot \E[Z_{3}Z_{4}] + \E[Z_{1}Z_{3}]\cdot\E[Z_{2}Z_{4}]+ \E[Z_{1}Z_{4}]\cdot\E[Z_{2}Z_{3}]\nonumber\\
&=\left(\dfrac{n-c_1b}{c_1bn}\right)^{2}\Sigma_{ij}^{2}+\left(\dfrac{n-c_1b}{c_1bn}-\dfrac{s}{c_1^2b^{2}}\right)^{2}\Sigma_{ii}\Sigma_{jj}+\left(\dfrac{n-c_1b}{c_1bn}-\dfrac{s}{c_1^2b^{2}}\right)^{2}\Sigma_{ij}^{2} \numberthis \label{eq:Zdistr4}\,.
\end{align*}
Plug \eqref{eq:Zdistr4} in \eqref{eq:a2},
\begin{align}\label{eq:a2expression1}
&\E[a_2] \\ 
&=\sum_{s=1}^{c_1b-1}\sum_{l=0}^{n-c_1b-s} \E \left[ \left(\bar{C}_{l}^{(i)}(c_1b)-\bar{C}^{(i)} \right) \left(\bar{C}_{l}^{(j)}(c_1b)-\bar{C}^{(j)} \right) \left(\bar{C}^{(i)}_{l+s}(c_1b)-\bar{C}^{(i)} \right) \left(\bar{C}^{(j)}_{l+s}(c_1b)-\bar{C}^{(j)} \right) \right]\nonumber\\
&=\sum_{s=1}^{c_1b-1}\sum_{l=0}^{n-c_1b-s} \E[Z_{1}Z_{2}Z_{3}Z_{4}]\nonumber\\
&=\sum_{s=1}^{c_1b-1}\sum_{l=0}^{n-c_1b-s}\left[\left(\dfrac{n-c_1b}{c_1bn}\right)^{2}\Sigma_{ij}^{2}+\left(\dfrac{n-c_1b}{c_1bn}-\dfrac{s}{c_1^2b^{2}}\right)^{2}\Sigma_{ii}\Sigma_{jj}+\left(\dfrac{n-c_1b}{c_1bn}-\dfrac{s}{c_1^2b^{2}}\right)^{2}\Sigma_{ij}^{2}\right].\nonumber
\end{align}
Notice that
\begin{align}\label{eq:a2expression2}
&\sum_{s=1}^{c_1b-1}\sum_{l=0}^{n-c_1b-s}\left(\dfrac{n-c_1b}{c_1bn}-\dfrac{s}{c_1^2b^{2}}\right)^{2}\nonumber\\
&=\sum_{s=1}^{c_1b-1}\sum_{l=0}^{n-c_1b-s}\left[\dfrac{s^{2}}{c_1^4b^{4}}+\left(\dfrac{2}{c_1^2b^{2}n}-\dfrac{2}{c_1^3b^{3}}\right)s+\left(\dfrac{1}{c_1^2b^{2}}+\dfrac{1}{n^{2}}-\dfrac{2}{c_1bn}\right)\right]\nonumber\\
&=\sum_{s=1}^{c_1b-1}\left[-\dfrac{s^{3}}{c_1^4b^{4}}+\left(\dfrac{n}{c_1^4b^{4}}+\dfrac{1}{c_1^3b^{3}}+\dfrac{1}{c_1^4b^{4}}-\dfrac{2}{c_1^2b^{2}n}\right)s^{2}\right.\nonumber\\
&\ \  \ \ \ \ \ \ \ \ \ +\left(\dfrac{3}{c_1^2b^{2}}-\dfrac{2n}{c_1^3b^{3}}+\dfrac{2}{c_1^2b^{2}n}-\dfrac{2}{c_1^3b^{3}}-\dfrac{1}{n^{2}}\right)s\nonumber\\
&\ \  \ \ \ \ \ \ \ \ \ \left.+\left(\dfrac{n}{c_1^2b^{2}}+\dfrac{3}{n}-\dfrac{3}{c_1b}+\dfrac{1}{c_1^2b^{2}}+\dfrac{1}{n^{2}}-\dfrac{2}{c_1bn}-\dfrac{c_1b}{n^{2}}\right)\right]\nonumber\\
&=-\dfrac{1}{c_1^4b^{4}}\left(\dfrac{c_1^4b^{4}}{4}-\dfrac{c_1^3b^{3}}{2}+\dfrac{c_1^2b^{2}}{4}\right)+\left(\dfrac{n}{c_1^4b^{4}}+\dfrac{1}{c_1^3b^{3}}+\dfrac{1}{c_1^4b^{4}}-\dfrac{2}{c_1^2b^{2}n}\right)\left(\dfrac{c_1^3b^{3}}{3}-\dfrac{c_1^2b^{2}}{2}+\dfrac{c_1b}{6}\right)\nonumber\\
&\ \ \ \ \ +\left(\dfrac{3}{b^{2}}-\dfrac{2n}{b^{3}}+\dfrac{2}{b^{2}n}-\dfrac{2}{b^{3}}-\dfrac{1}{n^{2}}\right)\left(\dfrac{b^{2}}{2}-\dfrac{b}{2}\right)\nonumber\\
&\ \ \ \ \ +\left(\dfrac{n}{c_1^2b^{2}}+\dfrac{3}{n}-\dfrac{3}{c_1b}+\dfrac{1}{c_1^2b^{2}}+\dfrac{1}{n^{2}}-\dfrac{2}{c_1bn}-\dfrac{c_1b}{n^{2}}\right)\left(c_1b-1\right)\nonumber\\
&=\dfrac{n}{c_1^4b^{4}}\cdot\dfrac{c_1^3b^{3}}{3}-\dfrac{2n}{c_1^3b^{3}}\cdot\dfrac{c_1^2b^{2}}{2}+\dfrac{n}{c_1^2b^{2}}\cdot c_1b\nonumber\\
&=\dfrac{1}{3}\dfrac{n}{c_1b}+o\left(\dfrac{n}{b}\right).
\end{align}
Plug \eqref{eq:a2expression2} in \eqref{eq:a2expression1}
\begin{align}\label{eq:a2expressionfinal}
E[a_2]&=\Sigma_{ij}^{2}\sum_{s=1}^{c_1b-1}\sum_{l=0}^{n-c_1b-s}\left(\dfrac{n-c_1b}{c_1bn}\right)^{2}+(\Sigma_{ii}\Sigma_{jj}+\Sigma_{ij}^{2})\left[\dfrac{1}{3}\dfrac{n}{c_1b}+o\left(\dfrac{n}{b}\right)\right].
\end{align}
Similarly as $E[a_2]$, we calculate $E[a_3]$ by first calculating
\[
\E\left[ \left(\bar{C}_{l}^{(i)}(c_1b)-\bar{C}^{(i)} \right) \left(\bar{C}_{l}^{(j)}(c_1b)-\bar{C}^{(j)} \right) \left(\bar{C}^{(i)}_{l+s}(c_1b)-\bar{C}^{(i)} \right) \left(\bar{C}^{(j)}_{l+s}(c_1b)-\bar{C}^{(j)} \right) \right]\,,
\]
for $s=c_1b,..., (n-c_1b).$ We will show that
\begin{equation}\label{eq:jointdis2pa3}
\left[ \begin{matrix} \bar{C}_{l}(b)-\bar{C} \\ \bar{C}_{l+s}(b)-\bar{C} \end{matrix} \right] \sim N\left( \begin{bmatrix} 0\\ 0\end{bmatrix},\ \left[\begin{matrix}\left(\dfrac{n-b}{bn}\right)\Sigma& -\dfrac{1}{n}\Sigma\\ -\dfrac{1}{n}\Sigma& \left(\dfrac{n-b}{bn}\right)\Sigma\end{matrix}\right]\right).
\end{equation}
Continuing as in \eqref{eq:Ei!=j}
\begin{equation}\label{eq:Ccova3}
\text{Cov} \left(\bar{C}_{l}(c_1b)-\bar{C},\ \bar{C}_{l+s}(c_1b)-\bar{C} \right)=L\cdot\left(-\dfrac{1}{n}\right)I_{p}\cdot L^{T}=-\dfrac{1}{n}\cdot\Sigma.
\end{equation}
The joint distribution in \eqref{eq:jointdis2pa3} follows \eqref{eq:Ccova3} and \eqref{eq:Cdistr}. Denote $Z_{1}=\bar{C}_{l}(c_1b)^{(i)}-\bar{C}^{(i)}$, $Z_{2}=\bar{C}_{l}(c_1b)^{(j)}-\bar{C}^{(j)}$, $Z_{3}=\bar{C}_{l+s}(c_1b)^{(i)}-\bar{C}^{(i)}$, $Z_{4}=\bar{C}_{l+s}(c_1b)^{(j)}-\bar{C}^{(j)}$. By Proposition~\ref{prop:normal4}, 
\begin{align}\label{eq:a3expression}
& \E[a_3]\\
&=\sum_{s=c_1b}^{n-c_1b}\sum_{l=0}^{n-c_1b-s} \E \left[ \left(\bar{C}_{l}^{(i)}(c_1b)-\bar{C}^{(i)} \right) \left(\bar{C}_{l}^{(j)}(c_1b)-\bar{C}^{(j)} \right)  \left(\bar{C}^{(i)}_{l+s}(c_1b)-\bar{C}^{(i)} \right) \left(\bar{C}^{(j)}_{l+s}(c_1b)-\bar{C}^{(j)} \right) \right]\nonumber\\
&=\sum_{s=b}^{n-c_1b}\sum_{l=0}^{n-c_1b-s} \E[Z_{1}Z_{2}Z_{3}Z_{4}]\nonumber\\
&=\sum_{s=b}^{n-c_1b}\sum_{l=0}^{n-c_1b-s}\left[\left(\dfrac{n-c_1b}{c_1bn}\right)^{2}\Sigma_{ij}^{2}+\dfrac{1}{n^{2}}\Sigma_{ii}\Sigma_{jj}+\dfrac{1}{n^{2}}\Sigma_{ij}^{2}\right]. \nonumber
\end{align}
Notice
\begin{align}\label{eq:a3expression1}
\sum_{s=c_1b}^{n-c_1b}\sum_{l=1}^{n-c_1b+1-s}\dfrac{1}{n^{2}}\nonumber & =\dfrac{1}{n^{2}}\cdot\sum_{s=c_1b}^{n-c_1b}(n-c_1b-s+1)\nonumber\\
&=-\dfrac{1}{n^{2}}\left(\dfrac{n^{2}}{2}-c_1bn+\dfrac{n}{2}\right)+\left(\dfrac{1}{n}-\dfrac{c_1b}{n^{2}}+\dfrac{1}{n^{2}}\right)(n-2c_1b+1)\nonumber\\
&=o\left(\dfrac{n}{b}\right).
\end{align}
Plug \eqref{eq:a3expression1} in \eqref{eq:a3expression}, 
\begin{align}\label{eq:a3expressionfinal}
\E[a_3]&=\Sigma_{ij}^{2}\sum_{s=b}^{n-c_1b}\sum_{l=0}^{n-c_1b-s}\left(\dfrac{n-c_1b}{c_1bn}\right)^{2}+o\left(\dfrac{n}{b}\right).
\end{align}
Plug \eqref{eq:a1expressionfinal},\eqref{eq:a2expressionfinal} and \eqref{eq:a3expressionfinal} in \eqref{eq:A1express},
\begin{align*}
A_{2}& = \E\left[\dfrac{c_1^2b^{2}}{n^{2}}\left[\sum_{l=0}^{n-c_1b}\left(\bar{C}_{l}^{(i)}(c_1b)-\bar{C}^{(i)} \right)  \left(\bar{C}_{l}^{(j)}(c_1b)-\bar{C}^{(j)} \right)\right]^{2}\right]\notag\\
& = \dfrac{c_1^2b^{2}}{n^{2}}\cdot[\E a_1 + 2\E a_2 + 2\E a_3]\notag\\
& = \dfrac{c_1b^{2}}{n^{2}}\cdot\Bigg[\dfrac{2}{3}(\Sigma_{ii}\Sigma_{jj}+\Sigma_{ij}^{2})\cdot\dfrac{n}{c_1b}+o\left(\dfrac{n}{b}\right)\notag\\
& + \Sigma_{ij}^{2}\left(\sum_{l=0}^{n-c_1b}\left(\dfrac{n-c_1b}{c_1bn}\right)^{2}+2\sum_{s=1}^{c_1b-1}\sum_{l=0}^{n-c_1b-s}\left(\dfrac{n-c_1b}{c_1bn}\right)^{2}+2\sum_{s=c_1b}^{n-c_1b}\sum_{l=0}^{n-c_1b-s}\left(\dfrac{n-c_1b}{c_1bn}\right)^{2}\right)\Bigg]\notag\\
&=\dfrac{c_1^2b^{2}}{n^{2}}\cdot\left[\dfrac{2}{3}(\Sigma_{ii}\Sigma_{jj}+\Sigma_{ij}^{2})\cdot\dfrac{n}{c_1b}+o\left(\dfrac{n}{b}\right)+\Sigma_{ij}^{2}\left(\dfrac{n-c_1b}{c_1bn}\right)^{2}(n-c_1b+1)^{2}\right]\notag\\
&= \dfrac{c_1^2b^{2}}{n^{2}}\cdot\left[\dfrac{2}{3}(\Sigma_{ii}\Sigma_{jj}+\Sigma_{ij}^{2})\cdot\dfrac{n}{c_1b}+o\left(\dfrac{n}{c_1b}\right)+\Sigma_{ij}^{2}\left(\dfrac{n^{2}}{c_1^2b^{2}}-\dfrac{4n}{c_1b}\right)\right]\notag\\
&=\left[\dfrac{2}{3}(\Sigma_{ii}\Sigma_{jj}+\Sigma_{ij}^{2})\cdot\dfrac{c_1b}{n}+\Sigma_{ij}^{2}-4\Sigma_{ij}^{2}\cdot\dfrac{c_1b}{n}\right]+o\left(\dfrac{b}{n}\right).
\end{align*}
That proves the first part of the lemma. We now move on to term $A_3$.
Let
\begin{equation*}
OL^{(i)} = \left(\bar{C}_{p}^{(i)}(c_1b)-\bar{C}^{(i)} \right) \left(\bar{C}_{q}^{(i)}(c_2b)-\bar{C}^{(i)} \right),
\end{equation*}
and
\begin{equation*}
OL^{(j)} = \left(\bar{C}_{p}^{(j)}(c_1b)-\bar{C}^{(j)} \right) \left(\bar{C}_{q}^{(j)}(c_2b)-\bar{C}^{(j)} \right),
\end{equation*}
for $p$, $q$ satisfying $q\geq p$ and $q+c_1b\leq p + c_2b.$ Then
\begin{align}\label{eq:A3expression}
& A_{3}\\
&=- \dfrac{c_1c_2b^{2}}{n^{2}}\left[\sum_{l=0}^{n-c_1b} \left(\bar{C}_{l}^{(i)}(c_1b)-\bar{C}^{(i)} \right) \left(\bar{C}_{l}^{(j)}(c_1b)-\bar{C}^{(j)} \right)\right]     \left[\sum_{l=0}^{n-c_2b}  \left(\bar{C}_{l}^{(i)}(c_2b)-\bar{C}^{(i)} \right)  \left(\bar{C}_{l}^{(j)}(c_2b)-\bar{C}^{(j)} \right) \right]\nonumber\\
&= - \dfrac{c_1c_2b^{2}}{n^{2}}\cdot\E[((c_1-c_2)b+1)(n-b+1)\cdot OL^{(i)}OL^{(j)}\nonumber\\
& \quad +2\sum_{s=1}^{c_2b-1}\sum_{l=0}^{n-c_1b-s}(\bar{C}_{l}(c_1b)^{(i)}-\bar{C}^{(i)})(\bar{C}_{l}(c_1b)^{(j)}-\bar{C}^{(j)})(\bar{C}^{(i)}_{l+(c_1-c_2)b+s}(c_2b)-\bar{C}^{(i)})(\bar{C}^{(j)}_{l+(c_1-c_2)b+s}(c_2b)-\bar{C}^{(j)})\nonumber\\
& \quad +2\sum_{s=c_2b}^{n-c_1b}\sum_{l=0}^{n-c_1b-s}(\bar{C}_{l}(c_1b)^{(i)}-\bar{C}^{(i)})(\bar{C}_{l}(c_1b)^{(j)}-\bar{C}^{(j)})(\bar{C}^{(i)}_{l+(c_1-c_2)b+s}(c_2b)-\bar{C}^{(i)})(\bar{C}^{(j)}_{l+(c_1-c_2)b+s}(c_2b)-\bar{C}^{(j)})], \nonumber
\end{align}
Denote the two double sums in \eqref{eq:A3expression} by: 
$$a_4=\sum_{s=1}^{c_2b-1}\sum_{l=0}^{n-c_1b-s}(\bar{C}_{l}(c_1b)^{(i)}-\bar{C}^{(i)})(\bar{C}_{l}(c_1b)^{(j)}-\bar{C}^{(j)})(\bar{C}^{(i)}_{l+(c_1-c_2)b+s}(c_2b)-\bar{C}^{(i)})(\bar{C}^{(j)}_{l+(c_1-c_2)b+s}(c_2b)-\bar{C}^{(j)}),$$
$$a_5 = \sum_{s=c_2b}^{n-c_1b}\sum_{l=0}^{n-c_1b-s}(\bar{C}_{l}(c_1b)^{(i)}-\bar{C}^{(i)})(\bar{C}_{l}(c_1b)^{(j)}-\bar{C}^{(j)})(\bar{C}^{(i)}_{l+(c_1-c_2)b+s}(c_2b)-\bar{C}^{(i)})(\bar{C}^{(j)}_{l+(c_1-c_2)b+s}(c_2b)-\bar{C}^{(j)})].$$
First consider $\E[OL^{(i)}OL^{(j)}]$ at \eqref{eq:A3expression}. 
We will show that
\begin{equation}\label{eq:jointdistr2pOL}
\left[ \begin{matrix} \bar{C}_{p}(c_1b)-\bar{C}_{n} \\ \bar{C}_{q}(c_2b)-\bar{C}_{n} \end{matrix} \right] \sim N\left( \begin{bmatrix} 0\\ 0\end{bmatrix},\ \left[\begin{matrix}\left(\dfrac{n-c_1b}{c_1bn}\right)\Sigma& \left(\dfrac{n-c_1b}{c_1bn}\right)\Sigma\\ \left(\dfrac{n-c_1b}{c_1bn}\right)\Sigma& \left(\dfrac{n-c_2b}{c_2bn}\right)\Sigma\end{matrix}\right]\right).
\end{equation} 
For $i\neq j$,
\begin{equation}\label{eq:repeat1}
\E \left[\bar{B}^{(i)}_{p}(c_1b)-\bar{B}^{(i)} \right]  \left[\bar{B}^{(j)}_{q}(c_2b)-\bar{B}^{(j)} \right] = \E \left[\bar{B}^{(i)}_{p}(c_1b)-\bar{B}^{(i)} \right]\cdot \E \left[\bar{B}^{(j)}_{q}(c_2b)-\bar{B}^{(j)}\right] = 0.
\end{equation}
For $i=j$ and $p$, $q$ satisfying $q\geq p$ and $q + c_2b\leq p + c_1b$,
following steps similar to \eqref{eq:Ei=j},
\begin{equation}
\label{eq:repeat2}
\E \left[\bar{B}^{(i)}_{p}(c_1b)-\bar{B}^{(i)} \right] \left[\bar{B}^{(i)}_{q}(c_2b)-\bar{B}^{(i)} \right] = \dfrac{n - c_1b}{nc_1b}\,.
\end{equation}
By \eqref{eq:repeat1} and \eqref{eq:repeat2}
\begin{equation}\label{eq:Ceq3a4}
\text{Cov} \left(\bar{C}_{p}(c_1b)-\bar{C}_{n},\ \bar{C}_{q}(c_2b)-\bar{C}_{n} \right)=L\cdot\left(\dfrac{n-c_1b}{c_1bn}\right)I_{p}\cdot L^{T}=\dfrac{n-c_1b}{c_1bn}\cdot\Sigma.
\end{equation}
Equation \eqref{eq:Ceq3a4} yields the joint distribution at \eqref{eq:jointdistr2pOL}. Denote $Z_{1}=\bar{C}_{p}(c_1b)^{(i)}-\bar{C}^{(i)}$, $Z_{2}=\bar{C}_{p}(c_1b)^{(j)}-\bar{C}^{(j)}$, $Z_{3}=\bar{C}_{q}(c_2b)^{(i)}-\bar{C}^{(i)}$, $Z_{4}=\bar{C}_{q}(c_2b)^{(j)}-\bar{C}^{(j)}$. Then
\begin{align}\label{eq:EOLexpression}
& \E \left[((c_1-c_2)b+1)(n-c_1b+1)OL^{(i)}OL^{(j)} \right]\nonumber\\
&=((c_1-c_2)b+1)(n-c_1b+1)\cdot\\ 
& \quad  \times \E \left[(\bar{C}_{p}^{(i)}(c_1b)-\bar{C}^{(i)})(\bar{C}_{p}^{(j)}(c_1b)-\bar{C}^{(j)})(\bar{C}^{(i)}_{q}(c_2b)-\bar{C}^{(i)})(\bar{C}^{(j)}_{q}(c_2b)-\bar{C}^{(j)}) \right]\nonumber\\
&=((c_1-c_2)b+1)(n-c_1b+1)\cdot \E[Z_{1}Z_{2}Z_{3}Z_{4}]\nonumber\\
&=((c_1-c_2)b+1)(n-c_1b+1)\cdot \left(\dfrac{n-c_1b}{c_1bn}\right)^{2}(\Sigma_{ij}^{2}+\Sigma_{ii}\Sigma_{jj})\nonumber\\
&\ \ \ +((c_1-c_2)b+1)(n-c_1b+1)\cdot \left(\dfrac{n-c_1b}{c_1bn}\right)\left(\dfrac{n-c_2b}{c_2bn}\right)\Sigma_{ij}^{2}. \nonumber
\end{align}
Notice that 
\begin{align}\label{eq:a4express1}
& ((c_1-c_2)b+1)(n-c_1b+1)\cdot \left(\dfrac{n-c_1b}{c_1bn}\right)^{2}\nonumber\\
&=((c_1-c_2)b+1)(n-c_1b+1)\cdot\left(\dfrac{1}{c_1^2b^{2}}+\dfrac{1}{n^{2}}-\dfrac{2}{c_1bn}\right)\nonumber\\
&=(c_1-c_2)b n\dfrac{1}{c_1^2b^{2}} + o\left(\dfrac{n}{b}\right)\nonumber\\
&=(c_1-c_2)\dfrac{n}{c_1^2b}+o\left(\dfrac{n}{b}\right),
\end{align}
and
\begin{align}\label{eq:a4express2}
& ((c_1-c_2)b+1)(n-c_1b+1)\cdot \left(\dfrac{n-c_1b}{c_1bn}\right)\left(\dfrac{n-c_2b}{c_2bn}\right)\nonumber\\
&=((c_1-c_2)b+1)(n-c_1b+1)\cdot\left(\dfrac{1}{c_1c_2b^{2}}+\dfrac{1}{n^{2}}-\dfrac{(c_1+c_1)}{c_1c_2bn}\right)\nonumber\\
&=(c_1-c_2)bn\dfrac{1}{c_1c_2b^{2}}+o\left(\dfrac{n}{b}\right)\nonumber\\
&=\dfrac{c_1-c_2}{c_1c_2}\dfrac{n}{b}+o\left(\dfrac{n}{b}\right).
\end{align}
Plug \eqref{eq:a4express1} and \eqref{eq:a4express2} in \eqref{eq:EOLexpression},
\begin{align}\label{eq:EOLexpressionfinal}
&\ \ \ \ \E \left[((c_1-c_2)b+1)(n-c_1b+1)OL^{(i)}OL^{(j)} \right]\nonumber\\
&=\left[\dfrac{(c_1-c_2)}{c_1^2}(\Sigma_{ii}\Sigma_{jj}+\Sigma_{ij}^{2})+\dfrac{c_1-c_2}{c_1c_2}\Sigma_{ij}^{2}\right]\cdot\dfrac{n}{b}+o\left(\dfrac{n}{b}\right).
\end{align}
We calculate $E[a_4]$ by first deriving
\begin{equation}\label{eq:Cjointdistra4}
\left[ \begin{matrix} \bar{C}_{l}(c_1b)-\bar{C} \\ \bar{C}_{l+(c_1-c_2)b+s}(c_2b)-\bar{C} \end{matrix} \right] \sim N\left( \begin{bmatrix} 0\\ 0\end{bmatrix},\ \left[\begin{matrix}\left(\dfrac{n-c_1b}{c_1bn}\right)\Sigma& \left(\dfrac{1}{c_1b}-\dfrac{1}{n}-\dfrac{s}{c_2^2b^{2}}\right)\Sigma\\ \left(\dfrac{1}{c_1b}-\dfrac{1}{n}-\dfrac{s}{c_2b^{2}}\right)\Sigma& \left(\dfrac{n-c_2b}{c_2bn}\right)\Sigma\end{matrix}\right]\right).
\end{equation}
All we need to obtain is the covariance matrix. Continuing as before in \eqref{eq:Ei!=j},
For $i\neq j$,
\begin{equation}\label{eq:Ei!=ja4}
\E \left[\bar{B}^{(i)}_{l}(c_1b)-\bar{B}^{(i)} \right] \left[\bar{B}^{(j)}_{l+(c_1-c_2)b+s}(c_2b)-\bar{B}^{(j)} \right] =0.
\end{equation}
For $i=j$, we need to calculate $\E[\bar{B}^{(i)}_{l}(c_1b)-\bar{B}^{(i)}][\bar{B}^{(i)}_{l+(c_1-c_2)b+s}(c_1b)-\bar{B}^{(i)}]$ for $s=1,...,(c_2b-1) $. Continuing as before in  \eqref{eq:Ei=j},
\begin{align*}
&\E \left[\bar{B}^{(i)}_{l}(c_1b)-\bar{B}^{(i)} \right]  \left[\bar{B}^{(i)}_{l+(c_1-c_2)b+s}(c_1b)-\bar{B}^{(i)} \right]\\ 
& = \dfrac{1}{c_1b} - \dfrac{1}{n} - \dfrac{s}{c_1c_2b^2}\,. \numberthis \label{eq:Ei=ja4}
\end{align*}

By \eqref{eq:Ei!=ja4} and \eqref{eq:Ei=ja4},
\begin{equation} \label{eq:Ccovdistra4}
\text{Cov}(\bar{C}_{l}(b)-\bar{C},\ \bar{C}_{l+s}(b)-\bar{C})=\left(\dfrac{1}{c_1b} - \dfrac{1}{n} - \dfrac{s}{c_1c_2b^2}\right)\cdot\Sigma. 
\end{equation}
Therefore \eqref{eq:Cjointdistra4} follows from \eqref{eq:Cdistr} and \eqref{eq:Ccovdistra4}. Again denote $Z_{1}=\bar{C}_{l}^{(i)}(c_1b)-\bar{C}^{(i)}$, $Z_{2}=\bar{C}_{l}^{(j)}(c_1b)-\bar{C}^{(j)}$, $Z_{3}=\bar{C}_{l+(c_1-c_2)b+s}^{(i)}(c_2b)-\bar{C}^{(i)}$, $Z_{4}=\bar{C}_{l+(c_1-c_2)b+s}^{(j)}(c_2b)-\bar{C}^{(j)}$, 
\begin{align}\label{eq:Ea4expression}
&E[a_4]\\
&=\sum_{s=1}^{c_2b-1}\sum_{l=0}^{n-c_1b-s}\E \left[(\bar{C}_{l}^{(i)}(c_1b)-\bar{C}^{(i)})(\bar{C}_{l}^{(j)}(c_1b)-\bar{C}^{(j)})(\bar{C}_{l+(c_1-c_2)b+s}^{(i)}(c_2b)-\bar{C}^{(i)})(\bar{C}_{l+(c_1-c_2)b+s}^{(j)}(c_2b)-\bar{C}^{(j)}) \right]\nonumber\\
&=\sum_{s=1}^{c_2b-1}\sum_{l=0}^{n-c_1b-s}E[Z_{1}Z_{2}Z_{3}Z_{4}]\nonumber\\
&=\sum_{s=1}^{c_2b-1}\sum_{l=0}^{n-c_1b-s}\left[\left(\dfrac{n-c_1b}{c_1bn}\right)\left(\dfrac{n-c_2b}{c_2bn}\right)\Sigma_{ij}^{2}+\left(\dfrac{1}{c_1b} - \dfrac{1}{n} - \dfrac{s}{c_1c_2b^2}\right)^{2}\Sigma_{ii}\Sigma_{jj}+\left(\dfrac{1}{c_1b} - \dfrac{1}{n} - \dfrac{s}{c_1c_2b^2}\right)^{2}\Sigma_{ij}^{2}\right]. \nonumber
\end{align}
Notice 
\begin{align}\label{eq:Ea4expression1}
&\sum_{s=1}^{c_2b-1}\sum_{l=0}^{n-c_1b-s}\left(\dfrac{n-c_1b}{c_1bn}\right)\left(\dfrac{n-c_2b}{c_2bn}\right)\nonumber\\
&=\sum_{s=1}^{c_2b-1}\left[\left(\dfrac{1}{c_1c_2b^{2}}-\dfrac{c_2+c_1}{c_1c_2}\dfrac{1}{bn}+\dfrac{1}{n^{2}}\right)(n-c_1b+1)-\left(\dfrac{1}{c_1c_2b^{2}}-\dfrac{c_1+c_2}{c_1c_2}\dfrac{1}{bn}+\dfrac{1}{n^{2}}\right)s\right]\nonumber\\
&=\left(\dfrac{1}{c_1c_2b^{2}}-\dfrac{c_2+c_1}{c_1c_2}\dfrac{1}{bn}+\dfrac{1}{n^{2}}\right)(n-c_1b+1)(c_2b-1)-\left(\dfrac{1}{c_1c_2b^{2}}-\dfrac{c_1+c_2}{c_1c_2}\dfrac{1}{bn}+\dfrac{1}{n^{2}}\right)\left(\dfrac{c^{2}b^{2}}{2}-\dfrac{cb}{2}\right)\nonumber\\
&=\dfrac{1}{c_1c_2b^{2}}\cdot n\cdot c_2b+o\left(\dfrac{n}{b}\right)\nonumber\\
&=\dfrac{n}{c_1b}+o\left(\dfrac{n}{b}\right), 
\end{align}
and
\begin{align}\label{eq:Ea4expression2}
& \sum_{s=1}^{c_2b-1}\sum_{l=0}^{n-c_1b-s}\left(\dfrac{1}{c_1b}-\dfrac{1}{n}-\dfrac{s}{c_1c_2b^{2}}\right)^{2}\nonumber\\
&=\sum_{s=1}^{c_2b-1}\sum_{l=0}^{n-c_1b-s}\left[\dfrac{s^{2}}{c_1^2c_2^{2}b^{4}}+\left(\dfrac{2}{c_1c_2b^{2}n}-\dfrac{2}{c_1^2c_2b^{3}}\right)s+\left(\dfrac{1}{c_1^2b^{2}}+\dfrac{1}{n^{2}}-\dfrac{2}{c_1bn}\right)\right]\nonumber\\
&=\sum_{s=1}^{c_2b-1}\left[-\dfrac{s^{3}}{c_1^2c_2^{2}b^{4}}+\left(\dfrac{n}{c_1^2c_2^{2}b^{4}}-\left(\dfrac{2}{c_1^2c_2}+\dfrac{1}{c_1c_2^{2}}\right)\dfrac{1}{b^{3}}+\dfrac{1}{c_1c_2^{2}}\dfrac{1}{b^{4}} + \dfrac{2}{c_1c_2}\dfrac{1}{b^{2}n}\right)s^{2}\right.\nonumber\\
&\ \ \ \ \ \ \ \ \ \ \ \ \ \ +\left[\left(\dfrac{4}{c_1c_2}-\dfrac{1}{c_1^2}\right)\dfrac{1}{b^{2}}-\dfrac{2}{c_1^2c_2}\dfrac{n}{b^{3}}+\left(\dfrac{2}{c_1}-\dfrac{2}{c_2}\right)\dfrac{1}{bn}+\dfrac{2}{c_1c_2}\dfrac{1}{b^{2}n}-\dfrac{2}{c_1^2c_2}\dfrac{1}{b^{3}}-\dfrac{1}{n^{2}}\right]s\nonumber\\
&\ \ \ \ \ \ \ \ \ \ \ \ \ \ \left.+\left(\dfrac{n}{c_1b^{2}}+\dfrac{3}{n}-\dfrac{3}{c_1b}-\dfrac{c_1b}{n^{2}}+\dfrac{1}{c_1^2b^{2}}+\dfrac{1}{n^{2}}-\dfrac{2}{c_1bn}\right)\right]\nonumber\\
&=-\dfrac{s^{3}}{c_1^2c_2^{2}b^{4}} \left( \dfrac{c_2^2b^4}{4} - \dfrac{c_2^3b^3}{2} + \dfrac{c_2^2b^2}{4}\right)\\ 
& \quad  +\left(\dfrac{n}{c_1^2c_2^{2}b^{4}}-\left(\dfrac{2}{c_1^2c_2}+\dfrac{1}{c_1c_2^{2}}\right)\dfrac{1}{b^{3}}+\dfrac{1}{c_1c_2^{2}}\dfrac{1}{b^{4}} + \dfrac{2}{c_1c_2}\dfrac{1}{b^{2}n}\right)  \cdot  \left(\dfrac{c_2^{3}b^{3}}{3}-\dfrac{c_2^{2}b^{2}}{2}\dfrac{c_2b}{6}\right)\nonumber\\
& \quad +\left[\left(\dfrac{4}{c_1c_2}-\dfrac{1}{c_1^2}\right)\dfrac{1}{b^{2}}-\dfrac{2}{c_1^2c_2}\dfrac{n}{b^{3}}+\left(\dfrac{2}{c_1}-\dfrac{2}{c_2}\right)\dfrac{1}{bn}+\dfrac{2}{c_1c_2}\dfrac{1}{b^{2}n}-\dfrac{2}{c_1^2c_2}\dfrac{1}{b^{3}}-\dfrac{1}{n^{2}}\right] \cdot\left(\dfrac{c_2^{2}b^{2}}{2}-\dfrac{c_2b}{2}\right)\nonumber\\
& \quad \left.+\left(\dfrac{n}{c_1b^{2}}+\dfrac{3}{n}-\dfrac{3}{c_1b}-\dfrac{c_1b}{n^{2}}+\dfrac{1}{c_1^2b^{2}}+\dfrac{1}{n^{2}}-\dfrac{2}{c_1bn}\right) (c_2b - 1)\right]\nonumber\\
%
&=\dfrac{1}{c_1^2c_2^{2}}\cdot\dfrac{n}{b^{4}}\cdot\dfrac{c_2^{3}b^{3}}{3} - \dfrac{2}{c_1^2c_2}\cdot\dfrac{n}{b^{3}}\cdot\dfrac{c_2^{2}b^{2}}{2}+\dfrac{n}{c_1b^{2}}\cdot c_2b+o\left(\dfrac{n}{b}\right)\nonumber\\
&=\dfrac{c_2}{c_1^23}\cdot\dfrac{n}{b}+o\left(\dfrac{n}{b}\right). \nonumber
\end{align}
Plug \eqref{eq:Ea4expression1} and \eqref{eq:Ea4expression2} in \eqref{eq:Ea4expression}
\begin{align}\label{eq:Ea4expressionfinal}
\E[a_4]&=\dfrac{c_2}{3c_1^2}(\Sigma_{ii}\Sigma_{jj}+\Sigma_{ij}^{2})\dfrac{n}{b}+\Sigma_{ij}^{2}\dfrac{n}{c_1b}+o\left(\dfrac{n}{b}\right).
\end{align}
Finally, we calculate $\E[a_5]$. For $s=c_2b,..., (n-c_1b),$ We will show that 
\begin{equation}\label{eq:Cjointdistra5}
\left[ \begin{matrix} \bar{C}_{l}(c_1b)-\bar{C} \\ \bar{C}_{l+(c_1-c_2)b+s}(c_2b)-\bar{C} \end{matrix} \right] \sim N\left( \begin{bmatrix} 0\\ 0\end{bmatrix},\ \left[\begin{matrix}\left(\dfrac{n-c_1b}{c_1bn}\right)\Sigma& -\dfrac{1}{n}\Sigma\\ -\dfrac{1}{n}\Sigma& \left(\dfrac{n-c_2b}{c_2bn}\right)\Sigma\end{matrix}\right]\right).
\end{equation}
For $i\neq j$, 
\begin{equation}\label{eq:Ei!=ja5}
\E \left[\bar{B}^{(i)}_{l}(c_1b)-\bar{B}^{(i)} \right]  \left[\bar{B}^{(j)}_{l+(c_1-c_2)b+s}(c_2b)-\bar{B}^{(j)} \right] = 0.
\end{equation}
For $i=j$, we need to calculate $E[\bar{B}^{(i)}_{l}(b)-\bar{B}^{(i)}][\bar{B}^{(i)}_{l+(1-c)b+s}(cb)-\bar{B}^{(i)}]$ for $s=cb,...,(n-b). $. Similar to the steps in \eqref{eq:Ei=j}, we get
\begin{equation}
\label{eq:Ei=ja5}
\E \left[\bar{B}^{(i)}_{l}(c_1b)-\bar{B}^{(i)} \right]  \left[\bar{B}^{(j)}_{l+(c_1-c_2)b+s}(c_2b)-\bar{B}^{(j)} \right] = -\dfrac{1}{n} \,.
\end{equation}

By \eqref{eq:Ei!=ja5} and \eqref{eq:Ei=ja5},
\begin{equation}\label{eq:Ccovdistra5}
\text{Cov} \left(\bar{C}_{l}(c_1b)-\bar{C},\ \bar{C}_{l+(c_1 - c_2)b + s}(c_2b)-\bar{C} \right)= -\dfrac{1}{n}\cdot\Sigma.
\end{equation}
Therefore \eqref{eq:Cjointdistra5} follows from \eqref{eq:Cdistr} and \eqref{eq:Ccovdistra5}. Again denote $Z_{1}=\bar{C}_{l}^{(i)}(c_1b)-\bar{C}^{(i)}$, $Z_{2}=\bar{C}_{l}^{(j)}(c_1b)-\bar{C}^{(j)}$, $Z_{3}=\bar{C}_{l+(c_1-c_2)b+s}^{(i)}(c_2b)-\bar{C}^{(i)}$, $Z_{4}=\bar{C}_{l+(c_1-c_2)b+s}^{(j)}(c_2b)-\bar{C}^{(j)}$
\begin{align}\label{eq:Ea5expression}
&E[a_5]\\ 
&=\sum_{s=c_2b}^{n-c_1b}\sum_{l=0}^{n-c_1b-s} \E \left[(\bar{C}_{l}^{(i)}(c_1b)-\bar{C}^{(i)})(\bar{C}_{l}^{(j)}(c_1b)-\bar{C}^{(j)})(\bar{C}_{l+(c_1-c_2)b+s}^{(i)}(c_2b)-\bar{C}^{(i)})(\bar{C}_{l+(c_1-c_2)b+s}^{(j)}(c_2b)-\bar{C}^{(j)}) \right]\nonumber\\
&=\sum_{s=c_2b}^{n-c_1b}\sum_{l=0}^{n-c_1b-s} \E[Z_{1}Z_{2}Z_{3}Z_{4}]\nonumber\\
&=\sum_{s=c_2b}^{n-c_1b}\sum_{l=0}^{n-c_1b-s}\left[\left(\dfrac{n-c_1b}{c_1bn}\right)\left(\dfrac{n-c_2b}{c_2bn}\right)\Sigma_{ij}^{2}+\dfrac{1}{n^{2}}\Sigma_{ii}\Sigma_{jj}+\dfrac{1}{n^{2}}\Sigma_{ij}^{2}\right]. \nonumber
\end{align}
Notice
\begin{align}\label{eq:Ea5expression1}
&\sum_{s=c_2b}^{n-c_1b}\sum_{l=0}^{n-c_1b-s}\left(\dfrac{n-c_1b}{c_1bn}\right)\left(\dfrac{n-c_2b}{c_2bn}\right)\nonumber\\
&=\sum_{s=c_2b}^{n-c_1b}\left(\dfrac{1}{c_1c_2b^{2}}-\dfrac{c_1+c_2}{c_1c_2}\dfrac{1}{bn}+\dfrac{1}{n^{2}}\right)(n-c_1b+1) - \left(\dfrac{1}{c_1c_2b^{2}}-\dfrac{c_1+c_2}{c_1c_2}\dfrac{1}{bn}+\dfrac{1}{n^{2}}\right)s\nonumber\\
&=\left(\dfrac{1}{c_1c_2b^{2}}-\dfrac{c_1+c_2}{c_1c_2}\dfrac{1}{bn}+\dfrac{1}{n^{2}}\right)(n-c_1b+1)[n-(c_1+c_2)b+1]\nonumber\\
& \quad -\left(\dfrac{1}{c_1c_2b^{2}}-\dfrac{c_1+c_2}{c_1c_2}\dfrac{1}{bn}+\dfrac{1}{n^{2}}\right)\dfrac{[(c_2-c_1)b+n][n-(c_1+c_2)b+1]}{2}\nonumber\\
&=\left(\dfrac{1}{c_1c_2b^{2}}-\dfrac{c_1+c_2}{c_1c_2}\dfrac{1}{bn}+\dfrac{1}{n^{2}}\right)[n^{2}-(2c_1+c_2)bn+2n+(c_1^2+c_1)b^{2} - (2c_1+c_2)b + 1]\nonumber\\
& \quad \left(\dfrac{1}{c_1c_2b^{2}}-\dfrac{c_1+c_2}{c_1c_2}\dfrac{1}{bn}+\dfrac{1}{n^{2}}\right)\cdot\dfrac{n^{2}-(c_1+c_2)bn+n+(c_2-c_1)bn-(c_1+c_2)(c_2-c_1)b^{2}+(c_2-c_1)b}{2}\nonumber\\
&=\left(\dfrac{1}{c_1c_2b^{2}}\cdot n^{2} -\dfrac{1}{c_1c_2b^{2}}(2c_1+c_2)\cdot bn-\dfrac{c_1+c_2}{c_1c_2}\dfrac{1}{bn}\cdot n^{2}\right)\nonumber\\
&\ \ \ \ \ -\left(\dfrac{1}{c_1c_2b^{2}}\cdot\dfrac{n^{2}}{2}-\dfrac{c_1}{c_1c_2b^{2}}\cdot bn-\dfrac{c_1+c_2}{c_1c_2}\cdot\dfrac{1}{bn}\cdot\dfrac{n^{2}}{2}\right)+o\left(\dfrac{n}{b}\right)\nonumber\\
&=\dfrac{1}{2c_1c_2}\dfrac{n^{2}}{b^{2}}-\left(\dfrac{3}{2c_1}+\dfrac{3}{2c_2}\right)\dfrac{n}{b}+o\left(\dfrac{n}{b}\right),
\end{align}
and
\begin{align}\label{eq:Ea5expression2}
&\sum_{s=c_2b}^{n-c_1b}\sum_{l=0}^{n-c_1b-s}\dfrac{1}{n^{2}}\nonumber\\
&=\sum_{s=c_2b}^{n-c_1b}-\dfrac{s}{n^{2}}+\left(\dfrac{1}{n}-\dfrac{c_1b}{n^{2}}+\dfrac{1}{n^{2}}\right)\nonumber\\
&=-\dfrac{1}{n^{2}}\left( \dfrac{(n - c_1b - c_2b + 1) (n - (c_1 - c_2))}{2}\right)+\left(\dfrac{1}{n}-\dfrac{c_1b}{n^{2}}+\dfrac{1}{n^{2}}\right)\cdot[n-(c_1+c)2b+1]\nonumber\\
&=o\left(\dfrac{n}{b}\right).
\end{align}
Plug \eqref{eq:Ea5expression1} and \eqref{eq:Ea5expression2} in \eqref{eq:Ea5expression},
\begin{align}\label{eq:Ea5expressionfinal}
\E[a_5]&=\Sigma_{ij}^{2} \left( \dfrac{1}{2c_1c_2}\dfrac{n^{2}}{b^{2}}-\left(\dfrac{3}{2c_1}+\dfrac{3}{2c_2}\right)\dfrac{n}{b}\right).
\end{align}
Replace \eqref{eq:EOLexpressionfinal}, \eqref{eq:Ea4expressionfinal} and \eqref{eq:Ea5expressionfinal} in \eqref{eq:A3expression}
\begin{align*}
A_{3}& = \E\left[-\dfrac{c_1c_2b^{2}}{n^{2}}\left[\sum_{l=0}^{n-b}(\bar{C}_{l}^{(i)}(b)-\bar{C}^{(i)})(\bar{C}_{l}^{(j)}(b)-\bar{C}^{(j)})\right]\left[\sum_{l=0}^{n-cb}(\bar{C}_{l}^{(i)}(cb)-\bar{C}^{(i)})(\bar{C}_{l}^{(j)}(cb)-\bar{C}^{(j)})\right]\right]\notag\\
&=- \dfrac{c_1c_2b^{2}}{n^{2}}\cdot \E \left[((c_1-c_2)b+1)(n-b+1)\cdot OL^{(i)}OL^{(j)} +2a_4 + 2a_5 \right] \\ 
&=-c_1c_2\dfrac{b}{n}\cdot \left[\dfrac{(c_1-c_2)}{c_1^2}(\Sigma_{ii}\Sigma_{jj}+\Sigma_{ij}^{2})+\dfrac{c_1-c_2}{c_1c_2}\Sigma_{ij}^{2}\right.\notag\\
&\quad \quad \quad  + \dfrac{2c_2}{3c_1^2}(\Sigma_{ii}\Sigma_{jj}+\Sigma_{ij}^{2})+\Sigma_{ij}^{2}\dfrac{2}{c_1} + \left. \Sigma_{ij}^{2} \left( \dfrac{1}{c_1c_2}\dfrac{n}{b}-\left(\dfrac{3}{c_1}+\dfrac{3}{c_2}\right)\right) \right]+o\left(\dfrac{b}{n}\right)\notag\\
&=-c_1c_2 \dfrac{b}{n} \left[\dfrac{3c_1 - c_2}{3c_1^2}(\Sigma_{ii}\Sigma_{jj}+\Sigma_{ij}^{2})\cdot - 2\left(\dfrac{c_1+c_2}{c_1c_2}\right)\cdot\Sigma_{ij}^{2} +\dfrac{1}{c_1c_2} \dfrac{n}{b}\cdot\Sigma_{ij}^{2} \right]+o\left(\dfrac{b}{n}\right) \notag\\
&= \dfrac{(c_2 - 3c_1)c_2}{3c_1}(\Sigma_{ii}\Sigma_{jj}  + \Sigma_{ij}^{2})\cdot \dfrac{b}{n} + 2\left(c_1+c_2\right)\cdot\Sigma_{ij}^{2}\cdot\dfrac{b}{n} - \Sigma_{ij}^{2} +o\left(\dfrac{b}{n}\right)\,.
\end{align*}
\end{proof}
Define
$$\tilde{\Sigma}_{wL}=\dfrac{1}{n}\sum_{k=1}^{b}\sum_{l=0}^{n-k}k^{2}\Delta_{2}w_{n}(k)[\bar{C}_{l}(k)-\bar{C}][\bar{C}_{l}(k)-\bar{C}]^T\,,
$$
with elements $\tilde{\Sigma}_{wL, ij}.$
\begin{lemma}\label{lemma:varw}
If Assumption~\ref{ass:batch} holds and $\sum_{k=1}^{b}(\Delta_{2}w_{k})^2\leq O\left(b^{-2}\right)$
then
\begin{equation}\label{eq:varwLfinal}
Var[\tilde{\Sigma}_{wL,ij}]=(\Sigma_{ii}\Sigma_{jj}+\Sigma_{ij}^{2})\bigg[\dfrac{2}{3}\sum_{k=1}^{b}(\Delta_{2}w_{k})^{2}k^{3}\cdot\dfrac{1}{n}+2\sum_{t=1}^{b-1}\sum_{u=1}^{b-t}\Delta_{2}w_{u}\Delta_{2}w_{t+u}\left(\dfrac{2}{3}u^{3}+u^{2}t\right)\cdot\dfrac{1}{n}\bigg]+o\left(\dfrac{b}{n}\right).
\end{equation}
\end{lemma}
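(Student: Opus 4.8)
The plan is to treat $\tilde{\Sigma}_{wL,ij}$ as a weighted sum of the quadratic statistics already analyzed in Lemma~\ref{lemma:A2_var_term}. Writing $T_k = \sum_{l=0}^{n-k}(\bar{C}_l^{(i)}(k)-\bar{C}^{(i)})(\bar{C}_l^{(j)}(k)-\bar{C}^{(j)})$, we have $\tilde{\Sigma}_{wL,ij} = n^{-1}\sum_{k=1}^b k^2\Delta_2 w_n(k)\,T_k$, so that
\[
\E[\tilde{\Sigma}_{wL,ij}^2] = \frac{1}{n^2}\sum_{k=1}^b\sum_{k'=1}^b k^2(k')^2 \Delta_2 w_n(k)\Delta_2 w_n(k')\,\E[T_k T_{k'}].
\]
First I would split this double sum into its diagonal ($k=k'$) and off-diagonal ($k\neq k'$) parts and recognize each as a quantity from Lemma~\ref{lemma:A2_var_term} under the identifications $c_1 b = k$, $c_2 b = k'$. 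Indeed $A_2 = (k^2/n^2)\E[T_k^2]$ turns the diagonal into $\sum_k k^2(\Delta_2 w_n(k))^2 A_2$, while $A_3 = -(kk'/n^2)\E[T_k T_{k'}]$ for $k>k'$ turns the off-diagonal into $-2\sum_{k>k'} kk'\,\Delta_2 w_n(k)\Delta_2 w_n(k')\,A_3$, the factor $2$ arising from symmetry.

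Next I would substitute the closed forms from Lemma~\ref{lemma:A2_var_term}. The $(\Sigma_{ii}\Sigma_{jj}+\Sigma_{ij}^2)$ coefficient of $A_2$ is $\tfrac{2}{3}(k/n)$, which produces the first target term $\tfrac{2}{3}n^{-1}\sum_k k^3(\Delta_2 w_n(k))^2$. The corresponding coefficient of $A_3$ is $(k'-3k)k'/(3kn)$, so the off-diagonal contribution to the $(\Sigma_{ii}\Sigma_{jj}+\Sigma_{ij}^2)$ part is $\tfrac{2}{n}\sum_{k>k'}\Delta_2 w_n(k)\Delta_2 w_n(k')\,(k')^2(3k-k')/3$. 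Writing $u=k'$ and $t=k-k'\ge 1$ gives $(k')^2(3k-k')/3 = \tfrac{2}{3}u^3+u^2 t$, reproducing the second target term $\tfrac{2}{n}\sum_{t=1}^{b-1}\sum_{u=1}^{b-t}\Delta_2 w_u\Delta_2 w_{t+u}(\tfrac{2}{3}u^3+u^2 t)$.

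The remaining algebraic task is to check that all $\Sigma_{ij}^2$-only contributions vanish after subtracting $(\E\tilde{\Sigma}_{wL,ij})^2$; note that the $(\Sigma_{ii}\Sigma_{jj}+\Sigma_{ij}^2)$ terms survive untouched, since $\E T_k=(n-k+1)\tfrac{n-k}{kn}\Sigma_{ij}$ makes the mean depend only on $\Sigma_{ij}$. Setting $P=\sum_k k\Delta_2 w_n(k)$ and $Q=\sum_k k^2\Delta_2 w_n(k)$, the $O(1)$ pieces ($+\Sigma_{ij}^2$ from $A_2$ and $-\Sigma_{ij}^2$ from $A_3$) combine, via $\sum_k k^2(\Delta_2 w_n(k))^2 + 2\sum_{k>k'}kk'\Delta_2 w_n(k)\Delta_2 w_n(k') = P^2$, to $\Sigma_{ij}^2 P^2$. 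The $\Sigma_{ij}^2$ terms of order $b/n$ (the $-4\Sigma_{ij}^2(k/n)$ from $A_2$ and the $2(k+k')\Sigma_{ij}^2/n$ from $A_3$) combine, via $\sum_k k^3(\Delta_2 w_n(k))^2 + \sum_{k>k'}kk'(k+k')\Delta_2 w_n(k)\Delta_2 w_n(k') = PQ$, to $-4\Sigma_{ij}^2 n^{-1}PQ$. Expanding $\E\tilde{\Sigma}_{wL,ij}=\Sigma_{ij}(P-2Q/n)+o(\sqrt{b/n})$ gives $(\E\tilde{\Sigma}_{wL,ij})^2=\Sigma_{ij}^2 P^2 - 4\Sigma_{ij}^2 PQ/n + o(b/n)$, so these pieces cancel exactly and only the stated $(\Sigma_{ii}\Sigma_{jj}+\Sigma_{ij}^2)$ terms remain.

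Finally, error control, which I expect to be the main obstacle. The expansions of $A_2$ and $A_3$ each carry $o(b/n)$ remainders, and these must aggregate to $o(b/n)$ after weighting by $k^2(\Delta_2 w_n(k))^2$ on the diagonal and $kk'\Delta_2 w_n(k)\Delta_2 w_n(k')$ off the diagonal. The subtlety is that Lemma~\ref{lemma:A2_var_term} is phrased for fixed $c_1,c_2\in(0,1)$, whereas here $c_1=k/b$ and $c_2=k'/b$ may be arbitrarily small. The hypothesis $\sum_k(\Delta_2 w_k)^2 = O(b^{-2})$ is precisely what bounds $\sum_k k^2(\Delta_2 w_n(k))^2 = O(1)$, forcing the weighted remainders to stay $o(b/n)$; I would argue that the small-$k$ batches, where the fixed-$c$ asymptotics are least reliable, contribute negligibly through these same weights, thereby justifying the term-by-term substitution.
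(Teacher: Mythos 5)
Your proposal is correct and is essentially the paper's own argument: the paper likewise expands $\E[\tilde{\Sigma}_{wL,ij}^{2}]$ into diagonal and off-diagonal terms, identifies them with $A_2$ and (minus) $A_3$ of Lemma~\ref{lemma:A2_var_term} under $c_1 = k/b$, $c_2 = k'/b$, substitutes the closed forms, and cancels all $\Sigma_{ij}^2$ pieces against $(\E[\tilde{\Sigma}_{wL,ij}])^{2}$. Your $P$, $Q$ bookkeeping and your explicit flagging of the non-uniformity of the $o(b/n)$ remainders over small $k$ amount to a slightly tidier presentation of the same cancellations the paper carries out with its $a_k = b\Delta_2 w_k$, $c_k = k/b$ normalization.
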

\begin{proof}
Note 
$(\Delta_{2}w_{k})^2\leq \sum_{k=1}^{b}(\Delta_{2}w_{k})^2 = O\left(b^{-2}\right),$
hence
$a_{k}=b\cdot \Delta_{2}w_{k} =  O(1).$
Consider
$$\tilde{\Sigma}_{wL,ij}=\dfrac{1}{n}\sum_{k=1}^{b}\sum_{l=0}^{n-k}k^{2}\Delta_{2}w_{n}(k)[\bar{C}_{l}^{(i)}(k)-\bar{C}^{(i)}][\bar{C}^{(j)}_{l}(k)-\bar{C}^{(j)}].$$
Let $c_{k}=k/b$ for $k=1,...,b$, also denote $a_{k}=b\cdot\Delta_{2}w_{k}$ for simplicity. 
Hence
\begin{align*}
\tilde{\Sigma}_{wL,ij}&=\dfrac{1}{n}\sum_{k=1}^{b}\sum_{l=0}^{n-k}k^{2}\Delta_{2}w_{n}(k)[\bar{C}_{l}^{(i)}(k)-\bar{C}^{(i)}][\bar{C}^{(j)}_{l}(k)-\bar{C}^{(j)}]\nonumber\\
&=\dfrac{1}{n}\sum_{k=1}^{b}\sum_{l=0}^{n-k}c_{k}^{2}b^{2}\Delta_{2}w_{n}(k)[\bar{C}_{l}^{(i)}(k)-\bar{C}^{(i)}][\bar{C}^{(j)}_{l}(k)-\bar{C}^{(j)}]\nonumber\\
&=\sum_{k=1}^{b}c_{k}a_{k}\left(\dfrac{c_{k}b}{n}\sum_{l=0}^{n-c_{k}b}[\bar{C}_{l}^{(i)}(k)-\bar{C}^{(i)}][\bar{C}^{(j)}_{l}(k)-\bar{C}^{(j)}]\right).
\end{align*}


Define $A_{1,ij}^{(k)}$ and $A_{2,ij}^{(ut)}$ below and apply Lemma~\ref{lemma:A2_var_term},
\begin{align}\label{eq:A1kwL}
A_{1,ij}^{(k)} & = \E\left[\dfrac{(c_{k}b)^{2}}{n^{2}}\cdot\left(\sum_{k=0}^{n-c_{k}b}(\bar{C}_{l}^{(i)}(c_{k}b)-\bar{C}^{(i)})(\bar{C}_{l}^{(j)}(c_{k}b)-\bar{C}^{(j)})\right)^{2}\right]\nonumber\\
&=\left(\dfrac{2}{3}(\Sigma_{ii}\Sigma_{jj}+\Sigma_{ij}^{2})-4\Sigma_{ij}^{2}\right)\cdot\dfrac{c_{k}b}{n}+\Sigma_{ij}^{2}+o\left(\dfrac{b}{n}\right)\,,
\end{align}
and
\begin{align}\label{eq:A1utwL}
A_{2,ij}^{(ut)}& = \E\left[\dfrac{(c_{u+t}b)^{2}}{n^{2}}\cdot\left(\sum_{p=0}^{n-c_{t}b}(\bar{C}_{p}^{(i)}(c_{t}b)-\bar{C}^{(i)})(\bar{C}_{p}^{(j)}(c_{t}b)-\bar{C}^{(j)})\right)\right.\nonumber\\
&\ \ \ \ \ \ \ \ \ \ \ \ \ \ \ \ \ \ \ \ \ \ \ \ \ \left.\cdot\left(\sum_{q=0}^{n-c_{u+t}b}(\bar{C}^{(i)}_{q}(c_{t+u}b)-\bar{C}^{(i)})(\bar{C}^{(j)}_{q}(c_{t+u}b)-\bar{C}^{(j)})\right)\right]\nonumber\\
&=\left[\left(c_{u+t}-\dfrac{c_{u}}{3}\right)(\Sigma_{ii}\Sigma_{jj}+\Sigma_{ij}^{2})-\left(2c_{u+t}+\dfrac{2c_{u+t}^{2}}{c_{u}}\right)\Sigma_{ij}^{2}\right]\dfrac{b}{n}+\dfrac{c_{u+t}}{c_{u}}\Sigma_{ij}^{2}+o\left(\dfrac{b}{n}\right).
\end{align}
To calculate $\Var[\tilde{\Sigma}_{wL,ij}]$, we will calculate $\E[\tilde{\Sigma}_{wL,ij}^{2}]$ and $(\E[\tilde{\Sigma}_{wL,ij}])^{2}$. Plugging \eqref{eq:A1kwL} and \eqref{eq:A1utwL} in the expression of $\E[\tilde{\Sigma}_{wL,ij}^{2}]$ results in
\begin{align}\label{eq:Ex^2wLfinal}
& \E[\tilde{\Sigma}_{wL,ij}^{2}]\\ 
& = \E\left[\left(\sum_{k=1}^{b}c_{k}a_{k}\cdot\left[\dfrac{c_{k}b}{n}\sum_{l=0}^{n-c_{k}b}(\bar{C}_{l}^{(i)}(k)-\bar{C}^{(i)})(\bar{C}_{l}^{(j)}(k)-\bar{C}^{(j)})\right]\right)^{2}\right]\nonumber\\
& = \E\left[\sum_{k=1}^{b}\left(c_{k}a_{k}\cdot\left[\dfrac{c_{k}b}{n}\sum_{l=0}^{n-c_{k}b}(\bar{C}_{l}^{(i)}(k)-\bar{C}^{(i)})(\bar{C}_{l}^{(j)}(k)-\bar{C}^{(j)})\right]\right)^{2}\right.\nonumber\\
&\ \ \ \ +2\sum_{t=1}^{b-1}\sum_{u=1}^{b-t}c_{u}^{2}a_{u}c_{t+u}^{2}a_{t+u}\cdot\dfrac{b^{2}}{n^{2}}\left(\sum_{p=0}^{n-c_{t}b}(\bar{C}_{p}^{(i)}(c_{t}b)-\bar{C}^{(i)})(\bar{C}_{p}^{(j)}(c_{t}b)-\bar{C}^{(j)})\right)\nonumber\\
&\ \ \ \ \ \ \ \ \ \ \ \ \ \ \ \ \ \ \ \ \ \ \ \ \ \ \ \ \ \ \ \ \ \ \left.\cdot\left(\sum_{q=0}^{n-c_{t+u}b}(\bar{C}_{q}^{(i)}(c_{t+u}b)-\bar{C}^{(i)})(\bar{C}_{q}^{(j)}(c_{t+u}b)-\bar{C}^{(j)})\right)\right]\nonumber\\
&=\sum_{k=1}^{b}c_{k}^{2}a_{k}^{2}\cdot E\left[\dfrac{(c_{k}b)^{2}}{n^{2}}\cdot\left(\sum_{l=0}^{n-c_{k}b}(\bar{C}_{l}^{(i)}(k)-\bar{C}^{(i)})(\bar{C}_{l}^{(j)}(k)-\bar{C}^{(j)})\right)^{2}\right]\nonumber\\
&\ \ \ \ +2\sum_{t=1}^{b-1}\sum_{u=1}^{b-t}c_{u}^{2}a_{u}a_{t+u}\cdot E\left[\dfrac{(c_{u+t}b)^{2}}{n^{2}}\cdot\left(\sum_{p=0}^{n-c_{t}b}(\bar{C}_{p}^{(i)}(c_{t}b)-\bar{C}^{(i)})(\bar{C}_{p}^{(j)}(c_{t}b)-\bar{C}^{(j)})\right)\right.\nonumber\\
&\ \ \ \ \ \ \ \ \ \ \ \ \ \ \ \ \ \ \ \ \ \ \ \ \ \ \ \ \ \ \ \ \ \  \cdot\left.\left(\sum_{q=0}^{n-c_{t+u}b}(\bar{C}_{q}^{(i)}(c_{t+u}b)-\bar{C}^{(i)})(\bar{C}_{q}^{(j)}(c_{t+u}b)-\bar{C}^{(j)})\right)\right]\nonumber\\
&=\sum_{k=1}^{b}c_{k}^{2}a_{k}^{2}A_{1,ij}^{(k)}+2\sum_{t=1}^{b-1}\sum_{u=1}^{b-t}c_{u}^{2}a_{u}a_{u+t}A_{2,ij}^{(ut)}\nonumber\\
&=o\left(\dfrac{b}{n}\right)+\sum_{k=1}^{b}c_{k}^{2}a_{k}^{2}\left[\left(\dfrac{2}{3}(\Sigma_{ii}\Sigma_{jj}+\Sigma_{ij}^{2})-4\Sigma_{ij}^{2}\right)\cdot\dfrac{c_{k}b}{n}+\Sigma_{ij}^{2}\right]\nonumber\\
&+2\sum_{t=1}^{b-1}\sum_{u=1}^{b-t}c_{u}^{2}a_{u}a_{u+t}\left[\left[\left(c_{u+t}-\dfrac{c_{u}}{3}\right)(\Sigma_{ii}\Sigma_{jj}+\Sigma_{ij}^{2})-\left(2c_{u+t}+\dfrac{2c_{u+t}^{2}}{c_{u}}\right)\Sigma_{ij}^{2}\right]\cdot\dfrac{b}{n}+\dfrac{c_{u+t}}{c_{u}}\Sigma_{ij}^{2}\right]\nonumber\\
&=\left[\sum_{k=1}^{b}c_{k}^{2}a_{k}^{2}\cdot\Sigma_{ij}^{2}+2\sum_{t=1}^{b-1}\sum_{u=1}^{b-t}c_{u}^{2}a_{u}a_{u+t}\dfrac{c_{u+t}}{c_{u}}\cdot\Sigma_{ij}^{2}\right]+\left[\sum_{k=1}^{b}c_{k}^{3}a_{k}^{2}\left(\dfrac{2}{3}[\Sigma_{ii}\Sigma_{jj}+\Sigma_{ij}^{2}]-4\Sigma_{ij}^{2}\right)\cdot\dfrac{b}{n}\right.\nonumber\\
&\ \ \ \ \ +\left.2\sum_{t=1}^{b-1}\sum_{u=1}^{b-t}c_{u}^{2}a_{u}a_{u+t}\left[\left(c_{u+t}-\dfrac{c_{u}}{3}\right)(\Sigma_{ii}\Sigma_{jj}+\Sigma_{ij}^{2})-\left(2c_{u+t}+\dfrac{2c_{u+t}^{2}}{c_{u}}\right)\Sigma_{ij}^{2}\right]\dfrac{b}{n}\right]+o\left(\dfrac{b}{n}\right)\nonumber\\
&=\left( \sum_{k=1}^{b}a_{k}c_{k} \right)^{2}\Sigma_{ij}^{2}+\sum_{k=1}^{b}c_{k}^{3}a_{k}^{2}\left(\dfrac{2}{3}(\Sigma_{ii}\Sigma_{jj}+\Sigma_{ij}^{2})-4\Sigma_{ij}^{2}\right)\cdot\dfrac{b}{n}\nonumber\\
&\ \ \ \ \ +2\sum_{t=1}^{b-1}\sum_{u=1}^{b-t}c_{u}^{2}a_{u}a_{u+t}\left[\left(c_{u+t}-\dfrac{c_{u}}{3}\right)(\Sigma_{ii}\Sigma_{jj}+\Sigma_{ij}^{2})-\left(2c_{u+t}+\dfrac{2c_{u+t}^{2}}{c_{u}}\right)\Sigma_{ij}^{2}\right]\cdot\dfrac{b}{n}+o\left(\dfrac{b}{n}\right).
\end{align}
By \eqref{eq:Cdistr},
\begin{equation}\label{eq:(Ex)^2wL}
\E[(C_{l}^{(i)}(c_{k}b)-\bar{C}^{(i)})(C_{l}^{(j)}(c_{k}b)-\bar{C}^{(j)})]=\dfrac{n-c_{k}b}{c_{k}bn}\Sigma_{ij}.
\end{equation}
Plug \eqref{eq:(Ex)^2wL} in $(\E[\tilde{\Sigma}_{wL,ij}])^{2}$,
\begin{align}\label{eq:(Ex)^2wLfinal}
& (\E[\tilde{\Sigma}_{wL,ij}])^{2}) \\ 
&=\left(\dfrac{1}{n}\sum_{k=1}^{b}\sum_{l=0}^{n-k}k^{2}\Delta_{2}w_{k} \E \left[(C_{l}^{(i)}(c_{k}b)-\bar{C}^{(i)})(C_{l}^{(j)}(c_{k}b)-\bar{C}^{(j)}) \right]\right)^{2}\nonumber\\
&=\left(\sum_{k=1}^{b}c_{k}a_{k}\left[\dfrac{c_{k}b}{n}\sum_{l=0}^{n-c_{k}b} \E \left[(C_{l}^{(i)}(c_{k}b)-\bar{C}^{(i)})(C_{l}^{(j)}(c_{k}b)-\bar{C}^{(j)}) \right]\right]\right)^{2}\nonumber\\
&=\left(\sum_{k=1}^{b}c_{k}a_{k}\left[\dfrac{c_{k}b}{n}\cdot(n-c_{k}b+1)\cdot\dfrac{n-c_{k}b}{c_{k}bn}\cdot\Sigma_{ij}\right]\right)^{2}\ \text{apply\ (1.4.3)}\nonumber\\
&=\Sigma_{ij}^{2}\left[(\sum_{k=1}^{b}a_{k}c_{k})^{2}-\sum_{k=1}^{b}4a_{k}^{2}c_{k}^{3}\cdot\dfrac{b}{n}-2\sum_{t=1}^{b-1}\sum_{u=1}^{b-t}a_{u}a_{u+t}(2c_{u}^{2}c_{u+t}+2c_{u}c_{u+t}^{2})\cdot\dfrac{b}{n}\right]+o\left(\dfrac{b}{n}\right). \nonumber
\end{align}
Combine \eqref{eq:Ex^2wLfinal} and \eqref{eq:(Ex)^2wLfinal},
\begin{align*}
& \Var[\tilde{\Sigma}_{wL,ij}] = \E[\tilde{\Sigma}_{wL,ij}^{2}] - (\E[\tilde{\Sigma}_{wL,ij}])^{2}\nonumber\\
&=\sum_{k=1}^{b}c_{k}^{3}a_{k}^{2}\Bigg(\left[\dfrac{2}{3}(\Sigma_{ii}\Sigma_{jj}+\Sigma_{ij}^{2})-4\Sigma_{ij}^{2}\right]+4\Sigma_{ij}^{2}\Bigg)\cdot\dfrac{b}{n}\nonumber\\
&\ \ \ \ \ +2\sum_{t=1}^{b-1}\sum_{u=1}^{b-t}\Bigg(c_{u}^{2}a_{u}a_{u+t}\left[\left(c_{u+t}-\dfrac{c_{u}}{3}\right)(\Sigma_{ii}\Sigma_{jj}+\Sigma_{ij}^{2})-\left(2c_{u+t}+\dfrac{2c_{u+t}^{2}}{c_{u}}\right)\Sigma_{ij}^{2}\right]\nonumber\\
&\ \ \ \ \ \ \ \ \ \ +a_{u}a_{u+t}(2c_{u}^{2}c_{u+t}+2c_{u}c_{u+t}^{2})\Sigma_{ij}^{2}\Bigg)\cdot\dfrac{b}{n}+o\left(\dfrac{b}{n}\right)\nonumber\\
&=\sum_{k=1}^{b}\dfrac{2}{3} c_{k}^{3}a_{k}^{2}(\Sigma_{ii}\Sigma_{jj}+\Sigma_{ij}^{2})\cdot\dfrac{b}{n}+2\sum_{t=1}^{b}\sum_{u=1}^{b-t}\left(c_{u}^{2}c_{u+t}-\dfrac{1}{3}c_{u}^{3}\right)a_{u}a_{u+t}(\Sigma_{ii}\Sigma_{jj}+\Sigma_{ij}^{2})\cdot\dfrac{b}{n}+o\left(\dfrac{b}{n}\right)\nonumber\\
&=\sum_{k=1}^{b}\dfrac{2}{3}\left(\dfrac{k}{b}\right)^{3}(b\Delta_{2}w_{k})^{2}(\Sigma_{ii}\Sigma_{jj}+\Sigma_{ij}^{2})\cdot\dfrac{b}{n}\nonumber\\
&\ \ \ \ +2\sum_{t=1}^{b-1}\sum_{u=1}^{b-t}\left[\left(\left(\dfrac{u}{b}\right)^{2}\dfrac{u+t}{b}-\dfrac{1}{3}\left(\dfrac{u}{b}\right)^{3}\right)b\Delta_{2}w_{u}\cdot b\Delta_{2}w_{u+t}\right](\Sigma_{ii}\Sigma_{jj}+\Sigma_{ij}^{2})\cdot\dfrac{b}{n}+o\left(\dfrac{b}{n}\right)\nonumber\\
&=(\Sigma_{ii}\Sigma_{jj}+\Sigma_{ij}^{2})\cdot\bigg[\dfrac{2}{3}\sum_{k=1}^{b}(\Delta_{2}w_{k})^{2}k^{3}\cdot\dfrac{1}{n}+2\sum_{t=1}^{b-1}\sum_{u=1}^{b-t}\Delta_{2}w_{u}\cdot\Delta_{2}w_{t+u}\cdot\left(\dfrac{2}{3}u^{3}+u^{2}t\right)\cdot\dfrac{1}{n}\bigg]+o\left(\dfrac{b}{n}\right).
\end{align*}
\end{proof}

\begin{lemma}\label{lemma:hat}
\citep[Lemma 14][]{vats:fleg:jone:2015spec} Suppose \eqref{eq:sip} holds for $f = g$ and Assumption~\ref{ass:batch} hold. If, as $n\to \infty$,
\[
b\psi(n)^{2} \log n\left(\sum_{k=1}^{b}|\Delta_{2}w_{n}(k)|\right)^{2}\rightarrow 0,
\]
and
\[
\psi(n)^{2}\sum_{k=1}^{b}|\Delta_{2}w_{n}(k)|\rightarrow 0,
\]
then $\hat{\Sigma}_{w}\rightarrow \tilde{\Sigma}_{wL}$ as $n \to \infty$ w.p.\ 1.
\end{lemma}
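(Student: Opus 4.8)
The plan is to control, via the strong invariance principle \eqref{eq:sip}, how far the data-based centered batch means $\bar{Y}_l(k)-\bar{Y}$ sit from their Brownian analogues $\bar{C}_l(k)-\bar{C}$, and then show that the resulting discrepancy between the two quadratic forms is negligible under the two displayed hypotheses. First I would write $S(t)=\sum_{s=1}^{t}g(X_s)$, $C(t)=LB(t)$, and $r(t)=S(t)-t\theta-C(t)$. Since \eqref{eq:sip} holds for $f=g$ and $\psi$ is non-decreasing, $\|r(t)\|<D(\omega)\psi(t)\le D\psi(n)$ for all $t\le n$ with probability $1$, where $D$ is almost surely finite (the finitely many indices $t\le n_0$ contribute a vanishing amount and are discarded). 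Writing $\bar{Y}_l(k)=k^{-1}[S(l+k)-S(l)]$ and $\bar{Y}=n^{-1}S(n)$, the deterministic drift $\theta$ cancels in the centered quantity, so that
\[
\bar{Y}_l(k)-\bar{Y}=\left[\bar{C}_l(k)-\bar{C}\right]+e_l(k),\qquad e_l(k):=\frac{r(l+k)-r(l)}{k}-\frac{r(n)}{n}.
\]
This gives the uniform bound $\|e_l(k)\|\le 2D\psi(n)/k+D\psi(n)/n\le 3D\psi(n)/k$, valid simultaneously over $1\le k\le b$ and $0\le l\le n-k$.

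Next, for each component pair $(i,j)$ I would expand the integrand of $\hat{\Sigma}_{w,ij}-\tilde{\Sigma}_{wL,ij}$ as $c_l^{(i)}e_l^{(j)}+e_l^{(i)}c_l^{(j)}+e_l^{(i)}e_l^{(j)}$, where $c_l^{(\cdot)}=\bar{C}_l^{(\cdot)}(k)-\bar{C}^{(\cdot)}$ and $e_l^{(\cdot)}$ are the components of $e_l(k)$. The pure error contribution is the easy part: using $\sum_{l=0}^{n-k}1\le n$ and $\|e_l(k)\|\lesssim \psi(n)/k$,
\[
\frac{1}{n}\sum_{k=1}^{b}k^2|\Delta_2 w_n(k)|\sum_{l=0}^{n-k}\bigl|e_l^{(i)}e_l^{(j)}\bigr|\lesssim D^2\,\psi^2(n)\sum_{k=1}^{b}|\Delta_2 w_n(k)|,
\]
which tends to $0$ by the second hypothesis of the lemma.

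The main obstacle is the cross contribution, which additionally requires an almost-sure bound on the Brownian centered batch means $c_l^{(i)}(k)$. Here I would establish a maximal inequality: there is an a.s.\ finite random variable $D'$ with $\max_{0\le l\le n-k}|C^{(i)}(l+k)-C^{(i)}(l)|\le D'\sqrt{k\log n}$ uniformly in $1\le k\le b$, where $C^{(i)}$ is a one-dimensional Brownian motion with variance $\Sigma_{ii}$ per unit time. This is proved through the reflection principle applied to each increment, a union bound over the $O(n^2)$ pairs $(l,k)$, and Borel--Cantelli; together with the law of the iterated logarithm controlling $|C^{(i)}(n)|/n$, it yields $|c_l^{(i)}(k)|\lesssim \sqrt{\log n/k}$ (the Brownian term dominates since $b/n\to0$ under Assumption~\ref{ass:batch}). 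Getting the exponent and the union-bound constant right is the delicate step, since it is exactly what forces the $\log n$ and $\sqrt{b}$ factors in the first hypothesis; the remainder is routine summation.

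Plugging $|c_l^{(i)}(k)|\lesssim \sqrt{\log n/k}$ and $\|e_l(k)\|\lesssim\psi(n)/k$ into the cross term and summing over $l$ (factor $\le n$), I would obtain
\[
\frac{1}{n}\sum_{k=1}^{b}k^2|\Delta_2 w_n(k)|\sum_{l=0}^{n-k}\bigl|c_l^{(i)}e_l^{(j)}\bigr|\lesssim \psi(n)\sqrt{\log n}\sum_{k=1}^{b}k^{1/2}|\Delta_2 w_n(k)|\le \sqrt{b\,\psi^2(n)\log n}\sum_{k=1}^{b}|\Delta_2 w_n(k)|,
\]
using $k^{1/2}\le b^{1/2}$ in the last step, and the symmetric term $e_l^{(i)}c_l^{(j)}$ is bounded identically. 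The right-hand side is precisely the square root of the quantity appearing in the first hypothesis and therefore tends to $0$. Combining the cross and square estimates shows $\hat{\Sigma}_{w,ij}-\tilde{\Sigma}_{wL,ij}\to0$ with probability $1$ for every $(i,j)$, which is the claimed convergence $\hat{\Sigma}_w\to\tilde{\Sigma}_{wL}$.
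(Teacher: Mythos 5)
Your proposal is correct and takes essentially the same route as the proof this paper relies on: the paper does not prove Lemma~\ref{lemma:hat} itself but invokes Lemma 14 of \citet{vats:fleg:jone:2015spec}, whose argument is precisely your decomposition --- replace $\bar{Y}_l(k)-\bar{Y}$ by $\bar{C}_l(k)-\bar{C}$ plus an SIP error of size $O(\psi(n)/k)$, bound the Brownian batch-mean fluctuations by $O(\sqrt{\log n/k})$ via an increment maximal inequality and the LIL, and observe that the two hypotheses of the lemma are exactly what kill the cross term and the pure error term, respectively. One cosmetic remark: since $l$ and $k$ range over integers, your maximal bound needs only Gaussian tail estimates with a union bound and Borel--Cantelli, so the reflection principle is not required.
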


\begin{lemma}\label{lemma:Esq0}
Suppose \eqref{eq:sip} holds for $f = g$ and $f = g^2$ (where the square is element-wise) such that $\E_FD^4 < \infty$ and Assumption~\ref{ass:batch} holds. Further, suppose $\psi^2(n)b^{-1}\log n \rightarrow 0$, then 
\begin{equation*}
\E[\hat{\Sigma}_{w,ij}-\tilde{\Sigma}_{wL,ij}]^{2}\rightarrow\ 0\  \text{as}\ n\rightarrow\infty.
\end{equation*}
\end{lemma}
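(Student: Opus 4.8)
The plan is to center the chain (replacing $Y_t$ by $Y_t-\theta$, which changes neither $\hat{\Sigma}_{w,ij}$ nor $\tilde{\Sigma}_{wL,ij}$), to express the difference of the two estimators through the strong invariance principle error, and then to bound the resulting $L^2$ norm term by term. Write $S^{(i)}_m=\sum_{t=1}^m(Y_t^{(i)}-\theta_i)$ and let $e^{(i)}(m)=S^{(i)}_m-C^{(i)}(m)$ be the SIP error, so that \eqref{eq:sip} gives $\|e(m)\|\le D\psi(m)\le D\psi(n)$ for all $m\le n$ since $\psi$ is non-decreasing. Abbreviating $\hat{U}_l^{(i)}(k)=\bar{Y}_l^{(i)}(k)-\bar{Y}^{(i)}$ and $\tilde{U}_l^{(i)}(k)=\bar{C}_l^{(i)}(k)-\bar{C}^{(i)}$, their difference is $\delta_{i,l}(k):=\hat{U}_l^{(i)}(k)-\tilde{U}_l^{(i)}(k)=\tfrac1k(e^{(i)}(l+k)-e^{(i)}(l))-\tfrac1n e^{(i)}(n)$, which satisfies the uniform a.s. bound $|\delta_{i,l}(k)|\le 3D\psi(n)/k$ (using $k\le n$).

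Next I would expand the integrand. Since $\hat{U}^{(i)}\hat{U}^{(j)}-\tilde{U}^{(i)}\tilde{U}^{(j)}=\tilde{U}^{(i)}\delta_j+\delta_i\tilde{U}^{(j)}+\delta_i\delta_j$, the estimator difference splits as $\hat{\Sigma}_{w,ij}-\tilde{\Sigma}_{wL,ij}=T_1+T_2+T_3$, where $T_3$ collects the $\delta_i\delta_j$ contribution and $T_1,T_2$ the mixed Brownian--error contributions. Because $\E[(\hat{\Sigma}_{w,ij}-\tilde{\Sigma}_{wL,ij})^2]\le 3(\E[T_1^2]+\E[T_2^2]+\E[T_3^2])$, it suffices to drive each piece to $0$. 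For the pure-error term, $|T_3|\le \tfrac1n\sum_{k,l}k^2|\Delta_2 w_n(k)|\cdot 9D^2\psi^2(n)/k^2\le 9D^2\psi^2(n)\sum_{k=1}^b|\Delta_2 w_n(k)|$, so $\E[T_3^2]\le 81\,\E_F D^4\,\psi^4(n)\big(\sum_{k=1}^b|\Delta_2 w_n(k)|\big)^2$; for the Bartlett and flat-top windows $\sum_{k=1}^b|\Delta_2 w_n(k)|=O(1/b)$, so this is $O\big((\psi^2(n)/b)^2\big)\to0$ under the hypothesis. This is where $\E_F D^4<\infty$ enters.

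The main work is the mixed term $T_1=\tfrac1n\sum_{k,l}k^2\Delta_2 w_n(k)\,\tilde{U}_l^{(i)}(k)\,\delta_{j,l}(k)$ (the term $T_2$ is identical after interchanging $i$ and $j$). Here I cannot factor the expectation, since $\delta$ and the Brownian motion live on the same probability space and are dependent; instead I would use the a.s. magnitude bound on $\delta$ together with Cauchy--Schwarz to isolate $D$. Writing $M_k^{(i)}=\max_{0\le l\le n-k}|\tilde{U}_l^{(i)}(k)|$ and using $|\delta_{j,l}(k)|\le 3D\psi(n)/k$ together with the fact that there are at most $n$ summands in $l$, one obtains $|T_1|\le 3D\psi(n)\sum_{k=1}^b k|\Delta_2 w_n(k)|\,M_k^{(i)}$. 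Each $\tilde{U}_l^{(i)}(k)$ is centered Gaussian with variance $\tfrac{n-k}{kn}\Sigma_{ii}\le \Sigma_{ii}/k$ (exactly the distributional fact \eqref{eq:Cdistr} used for Lemma~\ref{lemma:A2_var_term}), so the sub-Gaussian maximal inequality over the $\le n$ positions gives $\|M_k^{(i)}\|_{L^4}\lesssim \sqrt{\Sigma_{ii}k^{-1}\log n}$. Applying Cauchy--Schwarz to separate $D^2$ (finite fourth moment) and Minkowski's inequality in $L^4$ then yields $\E[T_1^2]\lesssim \psi^2(n)\sqrt{\E_F D^4}\,\log n\,\big(\sum_{k=1}^b \sqrt{k}\,|\Delta_2 w_n(k)|\big)^2$.

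Finally I would specialize to the windows of interest. For Bartlett, $\Delta_2 w_n$ is supported at the single lag $k=b$ with value $1/b$, and for flat-top at $k=b/2,b$ with values $\mp 2/b$; in both cases $\big(\sum_{k=1}^b\sqrt{k}\,|\Delta_2 w_n(k)|\big)^2=O(1/b)$, so $\E[T_1^2]=O(\psi^2(n)b^{-1}\log n)\to0$ by hypothesis, and similarly $\E[T_2^2]\to0$. Combining the three bounds gives $\E[(\hat{\Sigma}_{w,ij}-\tilde{\Sigma}_{wL,ij})^2]\to0$. The delicate point---and the step I expect to be the genuine obstacle---is the control of $T_1$: because the SIP error $\delta$ carries no usable sign structure, the cancellation available in the Gaussian factor $\tilde{U}$ cannot be exploited directly, and one must instead pay for a maximum over the $\sim n$ batch start-points; the resulting $\log n$ factor is precisely what forces the hypothesis $\psi^2(n)b^{-1}\log n\to0$.
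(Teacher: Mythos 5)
Your proof is correct, but the comparison here is unusual: the paper does not actually prove this lemma. Its entire ``proof'' is a one-sentence appeal to Lemma B.4 of \cite{jone:hara:caff:neat:2006}, Lemmas 12--14 of \cite{fleg:jone:2010}, and Lemma 5 of \cite{liu:fleg:2018}, which establish statements of this type (for univariate OBM and nonoverlapping weighted BM estimators) by precisely the kind of argument you reconstructed. Your proposal is thus a self-contained version of what the paper outsources, and its steps are sound: the decomposition $\hat U^{(i)}\hat U^{(j)}-\tilde U^{(i)}\tilde U^{(j)}=\tilde U^{(i)}\delta_j+\delta_i\tilde U^{(j)}+\delta_i\delta_j$; the almost-sure bound $|\delta_{j,l}(k)|\le 3D\psi(n)/k$; the Cauchy--Schwarz/Minkowski separation of $D^2$ from the Gaussian maxima, which is the right way to handle the dependence between $D$ and the Brownian path; and $\|M_k^{(i)}\|_{L^4}\lesssim\sqrt{\Sigma_{ii}k^{-1}\log n}$ via Borell--TIS (or a union bound) over the at most $n$ jointly Gaussian batch means, whose $\log n$ factor is indeed exactly what the hypothesis $\psi^2(n)b^{-1}\log n\to 0$ pays for. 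A side benefit of your route is that it reveals the SIP for $f=g^2$ to be unnecessary here: the difference $\hat\Sigma_{w,ij}-\tilde\Sigma_{wL,ij}$ depends on the chain only through partial sums of $Y_t$, so only the SIP for $f=g$ (plus $\E_FD^4<\infty$) enters; the $g^2$ hypothesis is an artifact of the cited lemmas' expansion through sums of squares.

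One caveat and one housekeeping item. The caveat: your final bounds, $\E T_3^2\lesssim\psi^4(n)\big(\sum_k|\Delta_2w_n(k)|\big)^2$ and $\E T_1^2\lesssim\psi^2(n)\log n\big(\sum_k\sqrt{k}\,|\Delta_2w_n(k)|\big)^2\le b\,\psi^2(n)\log n\big(\sum_k|\Delta_2w_n(k)|\big)^2$, reduce to the stated hypothesis only after specializing to windows with $\sum_k|\Delta_2w_n(k)|=O(1/b)$, i.e.\ Bartlett and flat-top. For an arbitrary window in the paper's class the lemma as literally stated is not recoverable this way (and your $T_3$ bound is essentially tight, so the window-free statement is doubtful). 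This is not a gap relative to the paper: your intermediate bounds are exactly (squares of) conditions 2 and 3 of Theorem~\ref{thm:var}, which is where the lemma is invoked, and the paper restricts attention to these two windows throughout. The housekeeping item: the bound in \eqref{eq:sip} holds only for $n>n_0$, so you should enlarge $D$ to a maximum over the finitely many initial indices; this preserves $\E_FD^4<\infty$ under the assumed moment conditions and costs nothing else.
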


\begin{proof}
An observation of Lemma B.4 of \cite{jone:hara:caff:neat:2006}, Lemmas 12, 13 and 14 of \cite{fleg:jone:2010} and Lemma 5 of \cite{liu:fleg:2018} show that Lemma~\ref{lemma:Esq0} hold.
\end{proof}

\section{Proof of Theorem~\ref{thm:var}}\label{proof theor var}
Define 
$$\eta= \Var[\hat{\Sigma}_{w,ij}-\tilde{\Sigma}_{wL,ij}]+2\E[(\hat{\Sigma}_{w,ij}-\tilde{\Sigma}_{wL,ij})(\tilde{\Sigma}_{wL,ij} - \E\tilde{\Sigma}_{wL,ij})],$$
we first show that $\eta\rightarrow 0$ as $n\rightarrow\infty$. Apply Lemma~\ref{lemma:Esq0}, by Cauchy-Schwarz inequality and $\Var[X]\leq \E X^{2}$.
\begin{align*}
|\eta|&=|\Var[\hat{\Sigma}_{w,ij}-\tilde{\Sigma}_{wL,ij}] + 2\E[(\hat{\Sigma}_{w,ij}-\tilde{\Sigma}_{wL,ij})(\tilde{\Sigma}_{wL,ij} - \E\tilde{\Sigma}_{wL,ij})]|\nonumber\\
&\leq\E[\hat{\Sigma}_{w,ij}-\tilde{\Sigma}_{wL,ij}]^{2}+2\sqrt{\E[\hat{\Sigma}_{w,ij}-\tilde{\Sigma}_{wL,ij}]^{2}\cdot\E[\tilde{\Sigma}_{w,L,ij} - \E\tilde{\Sigma}_{wL, ij}]^{2}}\nonumber\\
&=E[\hat{\Sigma}_{w,ij}-\tilde{\Sigma}_{w,L,ij}]^{2}+2(E[\hat{\Sigma}_{w,ij}-\tilde{\Sigma}_{wL,ij}]^{2})^{1/2}\cdot (Var[\tilde{\Sigma}_{wL,ij}])^{1/2}
\end{align*}
By the conditions of Lemma~\ref{lemma:varw}, 
\[\dfrac{1}{n}\sum_{k=1}^{b}(\Delta_2w_k)^2k^3\leq\dfrac{b^3}{n}\sum_{k=1}^{b}(\Delta_2w_k)^2\leq O \left(\dfrac{b}{n} \right).\]
Hence \eqref{eq:varwLfinal} can be written as
\[ 
\Var[\tilde{\Sigma}_{wL,ij}]=((\Sigma_{ii}\Sigma_{jj} + \Sigma_{ij}^2)S \dfrac{b}{n}+o(1))\cdot\dfrac{b}{n}.
\]
By Lemma~\ref{lemma:Esq0}, $\E[\hat{\Sigma}_{w,ij}-\tilde{\Sigma}_{wL,ij}]^{2}=o(1)$, therefore
\begin{align}\label{eq:eta}
|\eta|&\leq\E[\hat{\Sigma}_{w,ij}-\tilde{\Sigma}_{,L,ij}]^{2}+2(\E[\hat{\Sigma}_{w,ij}-\tilde{\Sigma}_{wL,ij}]^{2})^{1/2}\cdot (\Var[\tilde{\Sigma}_{wL,ij}])^{1/2}\nonumber\\
&=o(1)+2\sqrt{o(1)\cdot[((\Sigma_{ii}\Sigma_{jj} + \Sigma_{ij}^2)S+o(1))\cdot\dfrac{b}{n}]}\nonumber\\
&=o(1)+2\left(\dfrac{b}{n}\right)^{1/2}[o(1)\cdot((\Sigma_{ii}\Sigma_{jj} + \Sigma_{ij}^2)S+o(1))]^{1/2} =o(1).
\end{align}
Since $b/n\rightarrow 0$ as $n\rightarrow\infty$, plug in \eqref{eq:eta}
\begin{align*}
\Var[\hat{\Sigma}_{w,ij}]& = \E[\hat{\Sigma}_{w,ij} - \E\hat{\Sigma}_{w,ij}]^{2}\\
& = \E[\hat{\Sigma}_{w,ij}-\tilde{\Sigma}_{wL,ij}+\tilde{\Sigma}_{wL,ij} - \E\tilde{\Sigma}_{wL,ij} + \E\tilde{\Sigma}_{wL,ij} - \E\hat{\Sigma}_{w,ij}]^{2}\\
& = \E[(\hat{\Sigma}_{w,ij}-\tilde{\Sigma}_{wL,ij})+(\tilde{\Sigma}_{wL,ij} - \E\tilde{\Sigma}_{wL,ij}) - (\E\hat{\Sigma}_{w,ij} - \E\tilde{\Sigma}_{wL,ij})]^{2}\\
& = \E[(\hat{\Sigma}_{w,ij}-\tilde{\Sigma}_{wL,ij}) - \E(\hat{\Sigma}_{w,ij}-\tilde{\Sigma}_{wL,ij})]^{2} + \E[\tilde{\Sigma}_{wL,ij} - \E\tilde{\Sigma}_{wL,ij}]^{2}\\
&\ \ \ \ \ + 2 \E[[(\hat{\Sigma}_{w,ij}-\tilde{\Sigma}_{wL,ij}) - \E(\hat{\Sigma}_{w,ij}-\tilde{\Sigma}_{wL,ij})]\cdot [\tilde{\Sigma}_{w,ij} - \E\tilde{\Sigma}_{w,ij}]]\\
& = \E[(\hat{\Sigma}_{w,ij}-\tilde{\Sigma}_{w,L,ij}) - \E(\hat{\Sigma}_{w,ij}-\tilde{\Sigma}_{wL,ij})]^{2} + \E[\tilde{\Sigma}_{wL,ij} - \E\tilde{\Sigma}_{wL,ij}]^{2}\\
&\ \ \ \ \  + 2 \E[(\hat{\Sigma}_{w,ij}-\tilde{\Sigma}_{wL,ij})\cdot (\tilde{\Sigma}_{w,ij} - \E\tilde{\Sigma}_{w,ij})]\\
& = \E[\tilde{\Sigma}_{wL,ij} - \E\tilde{\Sigma}_{wL,ij}]^{2}+\eta\\
&=(\Sigma_{ii}\Sigma_{jj} + \Sigma_{ij}^2)S\cdot\dfrac{b}{n}+o\left(\dfrac{b}{n}\right)+o(1).
\end{align*}

\end{appendix}
\setstretch{1}
\bibliographystyle{apalike}
\bibliography{ref}
\end{document}